\documentclass{amsart}
\usepackage{amssymb}
\usepackage{graphicx}
\usepackage{amscd}
\usepackage{color}

\allowdisplaybreaks
\numberwithin{figure}{section}
\numberwithin{table}{section}
\numberwithin{equation}{section}

\newtheorem{thm}{Theorem}[section]

\newtheorem{lem}[thm]{Lemma}
\newtheorem{cor}[thm]{Corollary}
\newtheorem{remark}{Remark}[section]
\newcommand\C{\mathbb{C}}
\newcommand\N{\mathbb{N}}
\newcommand\R{\mathbb{R}}
\newcommand\Z{\mathbb{Z}}

\newcommand\B{\mathcal B}
\newcommand\Ab{\boldsymbol A}
\newcommand\Bb{\boldsymbol B}
\newcommand\D{\mathcal D}
\newcommand\w{\omega}
\renewcommand\Im{\operatorname{Im}}
\renewcommand\Re{\operatorname{Re}}
\newcommand\err{\operatorname{err}}

\newcommand\Lip{\operatorname{Lip}}

\newcommand{\be}{b}

\newcommand{\irm}{\mathrm{i}}
\newcommand{\interior}{\operatorname{int}}

\parskip=10pt plus 2pt minus 2pt
\begin{document}
 \title[Computation of Hausdorff Dimension]
{A New Approach to Numerical Computation of Hausdorff Dimension of Iterated
  Function Systems: Applications to Complex Continued Fractions}
\author{Richard S. Falk}
\address{Department of Mathematics,
Rutgers University, Piscataway, NJ 08854}
\email{falk@math.rutgers.edu}
\urladdr{http://www.math.rutgers.edu/\char'176falk/}

\author{Roger D. Nussbaum}
\address{Department of Mathematics,
Rutgers University, Piscataway, NJ 08854}
\email{nussbaum@math.rutgers.edu}
\urladdr{http://www.math.rutgers.edu/\char'176nussbaum/}
\thanks{The work of the second author was supported by
NSF grant DMS-1201328.}
\subjclass[2000]{Primary 11K55, 37C30; Secondary: 65D05}
\keywords{Hausdorff dimension, positive transfer operators, continued fractions}
\date{August 22, 2017}

\begin{abstract}

  In a previous paper \cite{hdcomp1}, the authors developed a new approach to
  the computation of the Hausdorff dimension of the invariant set of an
  iterated function system or IFS and studied some applications in one
  dimension. The key idea, which has been known in varying degrees of
  generality for many years, is to associate to the IFS a parametrized family
  of positive, linear, Perron-Frobenius operators $L_s$. In our context, $L_s$
  is studied in a space of $C^m$ functions and is not compact. Nevertheless,
  it has a strictly positive $C^m$ eigenfunction $v_s$ with positive
  eigenvalue $\lambda_s$ equal to the spectral radius of $L_s$. Under
  appropriate assumptions on the IFS, the Hausdorff dimension of the invariant
  set of the IFS is the value $s=s_*$ for which $\lambda_s =1$.  To compute
  the Hausdorff dimension of an invariant set for an IFS associated to complex
  continued fractions, (which may arise from an infinite iterated function
  system), we approximate the eigenvalue problem by a collocation method using
  continuous piecewise bilinear functions.  Using the theory of positive
  linear operators and explicit a priori bounds on the partial derivatives of
  the strictly positive eigenfunction $v_s$, we are able to give rigorous
  upper and lower bounds for the Hausdorff dimension $s_*$, and these bounds
  converge to $s_*$ as the mesh size approaches zero. We also demonstrate by
  numerical computations that improved estimates can be obtained by the use of
  higher order piecewise tensor product polynomial approximations, although
  the present theory does not guarantee that these are strict upper and lower
  bounds. An important feature of our approach is that it also applies to the
  much more general problem of computing approximations to the spectral radius
  of positive transfer operators, which arise in many other applications.

\end{abstract}

\maketitle

\section{Introduction}
\label{sec:intro}

Our interest in this paper is in describing methods which give rigorous
estimates for the Hausdorff dimension of invariant sets for (possibly
infinite) iterated function systems or IFS's. For simplicity, we do not
consider here the important case of graph directed iterated function systems,
for which a similar approach can be given. Our immediate application is to the
case of invariant sets for IFS's associated to complex continued fractions,
but we expect to show in future work that other interesting examples can also
be treated. In previous work \cite{hdcomp1}, we considered IFS's in one
dimension, and in particular the computation of the Hausdorff dimension of
some Cantor sets arising from continued fraction expansions and also other
examples in which the underlying maps have less regularity.

To describe our present results, let $D \subset \R^n$ be a nonempty compact set,
$\rho$ a metric on $D$ which gives the topology on $D$, and $\theta_{\be}: D
\to D$, $\be \in \B$, a contraction mapping, i.e., a Lipschitz mapping (with
respect to $\rho$) with Lipschitz constant $\Lip(\theta_{\be})$, satisfying
$\Lip(\theta_{\be}):= c_{\be} <1$.  If $\B$ is finite and the above assumption
holds, it is known that there exists a unique, compact, nonempty set $C
\subset D$ such that $C = \cup_{\be \in \B} \theta_{\be}(C)$.  The set $C$ is
called the invariant set for the IFS $\{\theta_{\be} : b \in \B\}$. If
$\B$ is infinite and $\sup \{c_{\be} : \be \in \B\} = c <1$, there is a
naturally defined nonempty invariant set $C \subset D$ such that $C =
\cup_{\be \in \B} \theta_{\be}(C)$, but $C$ need not be compact.  In
\cite{hdcomp1}, the index set $\B$ was finite and could be simply described by
the notation $\theta_j$, $j = 1, \ldots, m$.  In the case of complex continued
fractions, which we consider here, $\be = m + ni$, $m$ belonging to a subset
of $\N$ and $n$ belonging to a subset of $\Z$.

Although we shall eventually specialize, since the method we consider has
applications other than the one we describe in this paper, it is useful, as
was done in \cite{hdcomp1}, to describe initially some function analytic
results in the generality of the previous paragraph. Let $H$ be a bounded,
open, mildly regular (defined in Section~\ref{sec:exist}) subset of $\R^n$ and
let $C^k_{\C}(\bar H)$ denote the complex Banach space of $C^k$ complex-valued
maps, all of whose partial derivatives of order $\nu \le k$ extend
continuously to $\bar H$.  For a given positive integer $N$, assume that
$g_{\be}: \bar H \to (0, \infty)$ are strictly positive $C^N$ functions for
$\be \in \B$ and $\theta_{\be}: \bar H \to \bar H$, $\be \in \B$, are $C^N$
maps and contractions.  For $s >0$ and integers $k$, $0 \le k \le N$, one can
define a bounded linear map $L_{s,k}: C^k(\bar H) \to C^k(\bar H)$ by the
formula
\begin{equation}
\label{intro1.2}
(L_{s,k} f)(x) = \sum_{\be \in \B} [g_{\be}(x)]^s f(\theta_{\be}(x)).
\end{equation}
Note that \eqref{intro1.2} also defines a bounded linear map of $C^k_{\R}(\bar
H)$ to itself, which (abusing notation), we shall also denote by $L_{s,k}$.
Linear maps like $L_{s,k}$ are sometimes called positive transfer operators or
Perron-Frobenius operators and arise in many contexts other than computation
of Hausdorff dimension: see, for example, \cite{Baladi}. If $r(L_{s,k})$
denotes the spectral radius of $L_{s,k}$, then $\lambda_s = r(L_{s,k})$ is
positive and independent of $k$ for $0 \le k \le N$; and $\lambda_s$ is an
algebraically simple eigenvalue of $L_{s,k}$ with a corresponding unique,
normalized strictly positive eigenfunction $v_s \in C^N(\bar H)$.
Furthermore, the map $s \mapsto \lambda_s$ is continuous.  If $\sigma(L_{s,k})
\subset \C$ denotes the spectrum of $L_{s,k}$, $\sigma(L_{s,k})$ depends on
$k$, but for $1 \le k \le N$,
\begin{equation}
\label{intro1.3}
\sup\{|z|: z \in \sigma(L_{s,k})\setminus\{\lambda_s\}\} < \lambda_s.
\end{equation}
If $k=0$, the strict inequality in \eqref{intro1.3} may fail. A more general
version of the above result is stated in Theorem~\ref{thm:1.1} of this paper
and Theorem~\ref{thm:1.1} is a special case of results in \cite{E}.  The
method of proof involves ideas from the theory of positive linear operators,
particularly generalizations of the Kre{\u\i}n-Rutman theorem to noncompact
linear operators; see \cite{Krein-Rutman}, \cite{Bonsall},
\cite{Schaefer-Wolff}, \cite{A}, \cite{L}, \cite{E}, and
\cite{Mallet-Paret-Nussbaum}. We do not use the thermodynamic formalism (see
\cite{Ruelle}) and often our operators cannot be studied in Banach spaces of
analytic functions.

The linear operators which are relevant for the computation of Hausdorff
dimension comprise a small subset of the transfer operators described in
\eqref{intro1.2}, but the analysis problem which we shall consider here can be
described in the generality of \eqref{intro1.2} and is of interest in this
more general context. We want to find rigorous methods to estimate
$r(L_{s,k})$ accurately and then use these methods to estimate $s_*$, where,
in our applications, $s_*$ will be the unique number $s \ge 0$ such that
$r(L_{s,k})=1$.  Under further assumptions, we shall see that $s_*$ equals
$\dim_H(C)$, the Hausdorff dimension of the invariant set associated to the
IFS.  This observation about Hausdorff dimension has been made, in varying
degrees of generality by many authors. See, for example, \cite{Bumby1},
\cite{Bumby2}, \cite{Bowen}, \cite{Cusick1}, \cite{Cusick2}, \cite{Falconer},
\cite{Good}, \cite{Hensley1}, \cite{Hensley2}, \cite{Hensley3},
\cite{J}, \cite{Jenkinson}, \cite{Jenkinson-Pollicott},
\cite{MR1902887}, \cite{H}, \cite{Mauldin-Urbanski}, \cite{N-P-L},
\cite{Ruelle}, \cite{Ruelle2}, \cite{Rugh}, and \cite{Schief}.

We assume in this paper that $H$ is a bounded, open mildly regular subset of
$\R^2 = \C$ and that $\theta_{\be}$, $\be \in \B$, are analytic or conjugate
analytic contraction maps, defined on an open neighborhood of $\bar H$ and
satisfying $\theta_{\be}(H) \subset H$.  We define $D \theta_{\be}(z) =
\lim_{h \rightarrow 0} |[\theta_{\be}(z+h)- \theta_{\be}(z)]/h|$, where $h \in
\C$ in the limit, and we assume that $D \theta_{\be}(z)\neq 0$ for $z \in \bar
H$. In this case, $L_{s,k}$ is defined by \eqref{intro1.2}, with $x$ replaced
by $z$, and $g_{\be}(z)= D \theta_{\be}(z)$.  It is then possible to obtain
explicit upper and lower bounds for $D_1^p v_s(x_1,x_2))/v_s(x_1,x_2)$ and
$D_2^p v_s(x_1,x_2))/v_s(x_1,x_2)$, where $D_1 = \partial/\partial x_1$ and
$D_2 = \partial/\partial x_2$. However, for simplicity we restrict ourselves
to the choice $\theta_{\be}(z) = (z + \be)^{-1}$, where $\be \in \C$ and
$\Re(\be) > 0$. In this case we obtain in Section~\ref{sec:mobius} explicit
upper and lower bounds for $D_k^p v_s(x_1,x_2))/v_s(x_1,x_2)$ for $1 \le p \le
4$, $1 \le k \le 2$, and $x_1 >0$.  In both the one and two dimensional cases,
these estimates play a crucial role in allowing us to obtain rigorous upper
and lower bounds for the Hausdorff dimension. Of course, obtaining these
estimates adds to the length of \cite{hdcomp1} and this paper.  However, aside
from their intrinsic interest, we believe these results will prove useful in
other contexts, e.g., in treating generalizations of the {\it Texan
  conjecture} (see \cite{K-Z} and \cite{Jenkinson}).

The basic idea of our numerical scheme is to cover $\bar H$ by nonoverlapping
squares of side $h$.  We remark that our collection of squares need not be a
{\it Markov partition} for our IFS; compare \cite{McMullen}. We then
approximate the strictly positive, $C^2$ eigenfunction $v_s$ by a continuous
piecewise bilinear function.  Using the explicit bounds on the unmixed
derivatives of $v_s$ of order $2$, we are then able to associate to the
operator $L_{s,k}$, square matrices $A_s$ and $B_s$, which have nonnegative
entries and also have the property that $r(A_s) \le \lambda_s \le r(B_s)$. A
key role here is played by an elementary fact (see Lemma~\ref{lem:nonneg} in
Section~\ref{sec:prelim}) which is not as well known as it should be and in
the matrix case reduces to the following observation:  If $M$ is a nonnegative
matrix and $v$ is a strictly positive vector and $M v \le \lambda v$,
(coordinate-wise), then $r(M) \le \lambda$.  Analogously, $r(M) \ge \lambda$
if $M v \ge \lambda v$.

If $s_*$ denotes the unique value of $s$ such that $r(L_{s_*}) = \lambda_{s_*}
= 1$, so that $s_*$ is the Hausdorff dimension of the invariant set for the
IFS under study, we proceed as follows.  If we can find a number $s_1$ such
that $r(B_{s_1}) \le 1$, then, since the map $s \mapsto \lambda_s$ is
decreasing, $\lambda_{s_1} \le r(B_{s_1}) \le 1$, and we can conclude that
$s_* \le s_1$.  Analogously, if we can find a number $s_2$ such that
$r(A_{s_2}) \ge 1$, then $\lambda_{s_2} \ge r(A_{s_2}) \ge 1$, and we can
conclude that $s_* \ge s_2$.  By choosing the mesh size for our approximating
piecewise polynomials to be sufficiently small, we can make $s_1-s_2$ small,
providing a good estimate for $s_*$.  For a given $s$, $r(A_s)$ and $r(B_s)$
are easily found by variants of the power method for eigenvalues, since the
largest eigenvalue of $A_s$ (respectively, of $B_s$) has multiplicity one and
is the only eigenvalue of its modulus.  When the IFS is infinite, the
procedure is somewhat more complicated, and we include the necessary theory to
deal with this case.

This new approach was illustrated in \cite{hdcomp1}  by first
considering the computation of the Hausdorff
dimension of invariant sets in $[0,1]$ arising from classical continued
fraction expansions.  In this much studied case, one defines $\theta_m(x) =
1/(x+m)$, for $m$ a positive integer and $x \in [0,1]$; and for a subset $\B
\subset \N$, one considers the IFS $\{\theta_m: m \in \B\}$ and seeks
estimates on the Hausdorff dimension of the invariant set $C =C(\B)$ for this
IFS.  This problem has previously been considered by many authors. See
\cite{Bourgain-Kontorovich}, \cite{Bumby1}, \cite{Bumby2}, \cite{Good},
\cite{Hensley1}, \cite{Hensley2}, \cite{Hensley3}, \cite{Jenkinson},
\cite{Jenkinson-Pollicott}, and \cite{Heinemann-Urbanski}.  In this case,
\eqref{intro1.2} becomes
\begin{equation*}
(L_{s,k}v)(x) = \sum_{m \in \B} \Big(\frac{1}{x+m}\Big)^{2s} 
v\Big(\frac{1}{x+m}\Big), \qquad 0 \le x \le 1,
\end{equation*}
and one seeks a value $s \ge 0$ for which $\lambda_s:= r(L_{s,k}) =1$.

In Section~\ref{sec:2dexp}, we consider the computation of the Hausdorff
dimension of some invariant sets arising from complex continued fractions.
Suppose that $\B$ is a subset of $I_1 := \{m+ni : m \in \N, n\in \Z\}$, and
for each $b \in \B$, define $\theta_{\be}(z) = (z+\be)^{-1}$. Note that
$\theta_{\be}$ maps $\bar G = \{z \in \C : |z-1/2| \le 1/2\}$ into itself.  We
are interested in the Hausdorff dimension of the invariant set $C = C(\B)$ for
the IFS $\{\theta_{\be} : \be \in \B\}$.  This is a two dimensional problem
and we allow the possibility that $\B$ is infinite. In general (contrast work
in \cite{Jenkinson-Pollicott} and \cite{Jenkinson}), it does not seem possible
in this case to replace $L_{s,k}$, $k \ge 2$, by an operator $\Lambda_s$
acting on a Banach space of analytic functions of one complex variable and
satisfying $r(\Lambda_s) = r(L_{s,k})$.  Instead, we work in $C^2(\bar G)$ and
apply our methods to obtain rigorous upper and lower bounds for the Hausdorff
dimension $\dim_H(C(\B))$ for several examples. The case $\B = I_1$ has been
of particular interest and is one motivation for this paper.  In
\cite{Gardner-Mauldin}, Gardner and Mauldin proved that $d:= \dim_H(C(I_1))
<2$. In Theorem 6.6 of \cite{Mauldin-Urbanski}, Mauldin and Urbanski proved
that $1.2484 \le d \le 1.885$, and in \cite{Priyadarshi}, Priyadarshi proved
that $d \ge 1.78$.  In Section~\ref{subsec:method}, we show (modulo roundoff
errors in the calculation) that $1.85574 \le d \le 1.85589$. We believe (see
Remark~\ref{rem:interval} in Section~\ref{sec:2dexp}) that this estimate can
be made rigorous by using interval arithmetic along with high order precision,
although since we consider this paper to be a feasibility study, we have not
done this.

In the case when the eigenfunctions $v_s$ have additional smoothness, it is
natural to approximate $v_s(\cdot)$ by piecewise tensor product polynomials of
higher degree.  In this situation, the corresponding matrices $A_s$ and $B_s$
may no longer have all nonnegative entries and so the arguments of this paper
are no longer directly applicable.  However, as demonstrated in
Table~\ref{tb:t5} and Table~\ref{tb:t6}, this approach gives much improved
estimates for the value of $s$ for which $r(L_s)=1$.  It is our intent to
develop an extension of our theory to make these into rigorous bounds.

It is also worth comparing the approach used in our paper with that of
McMullen \cite{McMullen}. Superficially the methods seem different, but there
are underlying connections.  We exploit the existence of a $C^k$, strictly
positive eigenfunction $v_s$ of \eqref{intro1.2} with eigenvalue $\lambda_s$
equal to the spectral radius of $L_{s,k}$; and we observe that explicit bounds
on derivatives of $v_s$ can be exploited to prove convergence rates on
numerical approximation schemes which approximate $\lambda_s$.  McMullen does
not explicitly mention the operator $L_{s,k}$ or the analogue of $L_{s,k}$ for
graph directed iterated function systems, and he does not use $C^k$, strictly
positive eigenfunctions of equations like \eqref{intro1.2} or obtain bounds on
partial derivatives of such positive eigenfunctions.  Instead, he exploits
finite positive measures $\mu$ which are called ``$\mathcal{F}-$invariant
densities of dimension $\delta$.'' If $s_*$ is a value of $s$ for which the
above eigenvalue $\lambda_s =1$, then in our context the measure $\mu$ is an
eigenfunction of the Banach space adjoint $(L_{s_*,0})^*$ with eigenvalue $1$,
and our $s_*$ corresponds to $\delta$ above.  Standard arguments using
weak$^*$ compactness, the Schauder-Tychonoff fixed point theorem, and the
Riesz representation theorem imply the existence of a regular, finite,
positive, complete measure $\mu$, defined on a $\sigma$-algebra containing all
Borel subsets of the underlying space $\bar H$ and such that $(L_{s_*,0})^*
\mu = \mu$ and $\int v_{s_*} \, d \mu =1$.

McMullen also uses refinements of {\it Markov partitions,} while our
partitions, both here and in \cite{hdcomp1}, need not be Markov. However, in
the end, both approaches generate (different) $n \times n$ nonnegative
matrices $M_s$, parametrized by a parameter $s$ and both methods use the
spectral radius of $M_s$ to approximate the desired Hausdorff dimension $s_*$.
McMullen's matrices are obtained by approximating certain nonconstant
functions defined on a refinement of the original Markov partition by
piecewise constant functions defined with respect to this refinement.  We
approximate by bilinear functions on each subset in our partition.  As we show
below, by exploiting estimates on higher derivatives of $v_s(\cdot)$, our
methods give explicit upper and lower bounds for $s_*$ and more rapid
convergence to $s_*$ than one obtains using piecewise constant approximations.

The square matrices $A_s$ and $B_s$ mentioned above and described in more
detail later in the paper have nonnegative entries and satisfy $r(A_s) \le
\lambda_s \le r(B_s)$.  To apply standard numerical methods, it is useful to
know that all eigenvalues $\mu \neq r(A_s)$ of $A_s$ satisfy $|\mu| < r(A_s)$
and that $r(A_s)$ has algebraic multiplicity one and that corresponding
results hold for $r(B_s)$.  Such results were proved in Section 7 of
\cite{hdcomp1} in the one dimensional case when the mesh size, $h$, is
sufficiently small, and a similar argument can be used in the two dimensional
case under study here.  Note that this result does not follow from the
standard theory of nonnegative matrices, since $A_s$ and $B_s$ typically have
zero columns and are not primitive. As in \cite{hdcomp1}, we can also prove
that $r(A_s) \le r(B_s) \le (1 + C_1 h^2) r(A_s)$, where the constant $C_1$
can be explicitly estimated.  In a manner exactly analogous to that used in
\cite{hdcomp1}, it can be proved (see Theorem 7.1) that the map $s \mapsto
\lambda_s$ is log convex and strictly decreasing; and this same result holds
for $s \mapsto r(M_s)$, where $M_s$ is a naturally defined matrix such that
$A_s \le M_s \le B_s$.  This idea is exploited in our computer code in the
following way. Recall that if we can find a number $s_1$ such that $r(B_{s_1})
\le 1$, then, since the map $s \mapsto \lambda_s$ is decreasing,
$\lambda_{s_1} \le r(B_{s_1}) \le 1$, and we can conclude that $s_* \le
s_1$. To obtain the best bound, we seek a value $s_1$ such that $r(B_{s_1})$
is as close as possible to $1$, while still remaining $\le 1$.  This is done
by a slight modification of the secant method applied to finding a zero of the
function $\log[r(B_{s_1})]$. A similar approach is used with $A_s$ to find a
lower bound for $s_*$.

A summary of the paper is as follows.  In Section~\ref{sec:prelim}, we recall
the definition of Hausdorff dimension and present some mathematical
preliminaries. In Section \ref{sec:2dexp}, we present the details of our
approximation scheme for Hausdorff dimension, explain the crucial role played
by estimates on unmixed partial derivatives of order $\le 2$ of $v_s$, and
give the aforementioned estimates for Hausdorff dimension.  We emphasize that
this is a feasibility study. We have limited the accuracy of our
approximations to what is easily found using the standard precision of {\it
  Matlab} and have run only a limited number of examples, using mesh sizes
that allow the programs to run fairly quickly. In addition, we have not
attempted to exploit the special features of our problems, such as the fact
that our matrices are sparse.  Thus, it is clear that one could write a more
efficient code that would also speed up the computations.  However, the {\it
  Matlab} programs we have developed are available on the web at {\tt
  www.math.rutgers.edu/\char'176falk/hausdorff/codes.html}, and we hope other
researchers will run other examples of interest to them.

The theory underlying the work in Section~\ref{sec:2dexp} is presented in
Sections~\ref{sec:exist}--\ref{sec:logconvex}.  In Section~\ref{sec:exist} we
describe some results concerning existence of $C^m$ positive eigenfunctions
for a class of positive (in the sense of order-preserving) linear operators.
We remark that Theorem~\ref{thm:1.1} in Section~\ref{sec:exist} was only
proved in \cite{E} for finite IFS's.  As a result, some care is needed in
dealing with infinite IFS's.  In Section~\ref{sec:mobius}, we derive explicit
bounds on the partial derivatives of eigenfunctions of operators in which the
mappings $\theta_{\be}$ are given by M\"obius transformations which map a
given bounded open subset $H$ of $\C:= \R^2$ into $H$.  We use this
information in Theorems~\ref{thm:thm5.10}-\ref{thm:thm5.13} to obtain results
about the case of infinite IFS's which are adequate for our immediate
purposes. In Section~\ref{sec:compute-sr}, we verify some spectral properties
of the approximating matrices which justify standard numerical algorithms for
computing their spectral radii. Finally, in Section~\ref{sec:logconvex}, we
discuss the log convexity of the spectral radius $r(L_s)$, which we exploit in
our numerical approximation scheme.

\section{Preliminaries}
\label{sec:prelim}
We recall the definition of the Hausdorff dimension, $\dim_H(K)$, of
a subset $K \subset \R^N$.  To do so, we first define for a given $s \ge0$
and each set $K \subset \R^N$, 
\begin{equation*}
H_{\delta}^s(K) = \inf\{\sum_i |U_i|^s: \{U_i\} \text{ is a } \delta \text{ cover
of } K\},
\end{equation*}
where $|U|$ denotes the diameter of $U$ and a countable collection $\{U_i\}$
of subsets of $\R^N$ is a $\delta$-cover of $K \subset \R^N$ if
$K \subset \cup_i U_i$ and $0 < |U_i| < \delta$ for all $i$.  We then define
the $s$-dimensional Hausdorff measure
\begin{equation*}
H^s(K) = \lim_{\delta \rightarrow 0+} H_{\delta}^s(K).
\end{equation*}
Finally, we define the Hausdorff dimension of $K$, $\dim_H(K)$, as
\begin{equation*}
\dim_H(K) = \inf\{s: H^s(K) =0\}.
\end{equation*}

We now state the main result connecting Hausdorff dimension to the spectral
radius of the map defined by \eqref{intro1.2}.  To do so, we first define the
concept of an {\it infinitesimal similitude}.  Let $(S,d)$ be a bounded,
complete, perfect metric space. If $\theta:S \to S$, then $\theta$ is an
infinitesimal similitude at $t \in S$ if for any sequences $(s_k)_k$ and
$(t_k)_k$ with $s_k \neq t_k$ for $k \ge 1$ and $s_k \rightarrow t$, $t_k
\rightarrow t$, the limit
\begin{equation*}
\lim_{k \rightarrow \infty} \frac{d(\theta(s_k), \theta(t_k)}{d(s_k,t_k)}
=: (D \theta)(t)
\end{equation*}
exists and is independent of the particular sequences 
$(s_k)_k$ and $(t_k)_k$.  Furthermore, $\theta$ is an infinitesimal similitude
on $S$ if $\theta$ is an infinitesimal similitude at $t$ for all $t \in S$.

This concept generalizes the concept of affine linear similitudes, which
are affine linear contraction maps $\theta$ satisfying
for all $x,y \in \R^n$
\begin{equation*}
d(\theta(x), \theta(y)) = c d(x,y), \quad c < 1.
\end{equation*}
In particular, the examples discussed in \cite{hdcomp1}, such as maps
of the form $\theta(x) = 1/(x+m)$, with $m$ a positive integer,
are infinitesimal similitudes. More generally, if $S$ is a compact subset
of $\R^1$ and $\theta:S \to S$ extends to a $C^1$ map defined on an
open neighborhood of $S$ in $\R^1$, then $\theta$ is an infinitesimal
similitude. If $S$ is a compact subset of $\R^2:=\C$ and  $\theta:S \to S$
extends to an analytic or conjugate analytic map defined on an open
neighborhood of $S$ in $\C$, $\theta$ is an infinitesimal similitude.

\begin{thm} (Theorem 1.2 of \cite{N-P-L}.)
Let $\theta_i:S \to S$ for $1 \le i \le N$ be infinitesimal similitudes
and assume that the map $t \mapsto (D\theta_i)(t)$ is a strictly positive
H\"older continuous function on $S$.  Assume that $\theta_i$ is a Lipschitz
map with Lipschitz constant $c_i \le c <1$ and let
$C$ denote the unique, compact, nonempty invariant set such that
\begin{equation*}
C = \cup_{i=1}^N \theta_i(C).
\end{equation*}
Further, assume that $\theta_i$ satisfy
\begin{equation*}
\theta_i(C) \cap \theta_{j}(C) = \emptyset, \text{ for } 1 \le i,j \le N.
\ i \neq j
\end{equation*}
and are one-to-one on $C$.  Then the Hausdorff dimension of $C$ is
given by the unique $\sigma_0$ such that $r(L_{\sigma_0}) = 1$,
where $L_s:C(S) \to C(S)$ is defined for $s \ge 0$ by
\begin{equation*}
(L_sf)(t) = \sum_{i=1}^N [D \theta_i(t)]^s f(\theta_i(t)).
\end{equation*}
\end{thm}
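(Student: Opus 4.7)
The plan is to sandwich $\dim_H(C)$ between two bounds equal to $\sigma_0$ by using the natural cylinder covers of $C$ together with the spectral data of $L_{\sigma_0}$. As preliminaries, I would apply the positive-operator machinery (a Kre\u\i n--Rutman / Ruelle--Perron--Frobenius argument, applicable because each $D\theta_i$ is strictly positive and H\"older continuous on $S$): $\lambda_s = r(L_s)$ is an algebraically simple eigenvalue of $L_s$ with a strictly positive continuous eigenfunction $v_s\in C(S)$, the Banach space adjoint $L_s^{\ast}$ has a corresponding positive finite Borel eigenmeasure $\mu_s$ satisfying $L_s^{\ast}\mu_s=\lambda_s\mu_s$, and $s\mapsto\lambda_s$ is continuous and strictly decreasing from $\lambda_0=N$ down to $0$ (the latter because each $D\theta_i\le c<1$ follows from the contraction hypothesis). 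This produces a unique $\sigma_0$ with $\lambda_{\sigma_0}=1$. I would then normalize $v_{\sigma_0}$ by $\max v_{\sigma_0}=1$ and $\mu_{\sigma_0}$ to be a probability measure.

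The key analytic tool is a bounded-distortion estimate for the cylinder maps $\theta_{\mathbf{i}}:=\theta_{i_1}\circ\cdots\circ\theta_{i_n}$. Iterating the infinitesimal-similitude chain rule gives the infinitesimal derivative
\begin{equation*}
\Delta_{\mathbf{i}}(t) = \prod_{k=1}^{n} (D\theta_{i_k})\bigl(\theta_{i_{k+1}}\circ\cdots\circ\theta_{i_n}(t)\bigr),
\end{equation*}
and combining $c<1$ with the H\"older continuity of $\log D\theta_i$ in a telescoping estimate yields a constant $K\ge 1$, independent of $n$ and $\mathbf{i}$, with $\Delta_{\mathbf{i}}(t)/\Delta_{\mathbf{i}}(t')\le K$ for all $t,t'\in S$, and therefore
\begin{equation*}
K^{-1}\Delta_{\mathbf{i}}(t)\,\diam(C) \ \le\ \diam\bigl(\theta_{\mathbf{i}}(C)\bigr)\ \le\ K\,\Delta_{\mathbf{i}}(t)\,\diam(C).
\end{equation*}
Since $C=\bigcup_{|\mathbf{i}|=n}\theta_{\mathbf{i}}(C)$ and $\diam\theta_{\mathbf{i}}(C)\le c^n\diam(C)\to 0$, the $n$th-generation cylinders form a cover of vanishing mesh and
\begin{equation*}
\sum_{|\mathbf{i}|=n}\bigl(\diam\theta_{\mathbf{i}}(C)\bigr)^{\sigma_0}\ \le\ K^{\sigma_0}\,\diam(C)^{\sigma_0}\,(L_{\sigma_0}^n\mathbf{1})(t_0).
\end{equation*}
Because $\mathbf{1}\le v_{\sigma_0}/\min v_{\sigma_0}$ and $L_{\sigma_0}^n v_{\sigma_0}=v_{\sigma_0}$, the right side is bounded uniformly in $n$, so $H^{\sigma_0}(C)<\infty$ and $\dim_H(C)\le\sigma_0$.

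For the reverse inequality I would use $\mu_{\sigma_0}$ as a Frostman-type measure on $C$. The strong separation $\theta_i(C)\cap\theta_j(C)=\emptyset$ for $i\ne j$, together with injectivity of each $\theta_i$ on $C$, forces the support of $\mu_{\sigma_0}$ to lie in $C$ and lets the iterates of $L_{\sigma_0}^\ast$ distribute mass disjointly through cylinders, giving
\begin{equation*}
\mu_{\sigma_0}\bigl(\theta_{\mathbf{i}}(C)\bigr)\ \le\ K'\,\Delta_{\mathbf{i}}(t)^{\sigma_0}\ \le\ K''\bigl(\diam\theta_{\mathbf{i}}(C)\bigr)^{\sigma_0}.
\end{equation*}
For an arbitrary Borel set $U$ of sufficiently small diameter, choose the least $n$ so that the diameter of every $n$-cylinder is at most $|U|$; the separation hypothesis plus the uniform contraction $c<1$ bound the number of such cylinders meeting $U$ by a constant independent of $U$, yielding $\mu_{\sigma_0}(U)\le C_0|U|^{\sigma_0}$. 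The mass distribution principle then gives $H^{\sigma_0}(C)\ge \mu_{\sigma_0}(C)/C_0>0$, hence $\dim_H(C)\ge\sigma_0$. The hard part is this cylinder-covering step in the abstract metric-space setting $(S,d)$: there are no Besicovitch-type covering lemmas to fall back on, so one must exploit the strong separation hypothesis together with the uniform distortion bound to control how many distinct $n$-cylinders can meet a given small set. The H\"older hypothesis on $D\theta_i$ is indispensable both there and for producing the continuous strictly positive eigenfunction $v_{\sigma_0}$ and eigenmeasure $\mu_{\sigma_0}$ used throughout.
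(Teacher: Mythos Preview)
The paper does not give its own proof of this theorem: it is simply quoted, with attribution, as Theorem~1.2 of \cite{N-P-L}. So there is no in-paper argument against which to check your proposal line by line.

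That said, your sketch is the standard route to results of this Bowen-formula type and is, in outline, the approach taken in \cite{N-P-L}: one produces the strictly positive eigenfunction $v_{\sigma_0}$ and the adjoint eigenmeasure $\mu_{\sigma_0}$ via Perron--Frobenius/Kre\u\i n--Rutman machinery (H\"older continuity of $D\theta_i$ is used here to get the required quasi-compactness on a H\"older or Lipschitz space), establishes uniform bounded distortion for the compositions $\theta_{\mathbf i}$ by telescoping the H\"older oscillation of $\log D\theta_i$ against the geometric decay $c^k$, gets the upper bound $\dim_H(C)\le\sigma_0$ from the cylinder covers, and gets the lower bound from the mass distribution principle applied to $\mu_{\sigma_0}$. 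You have also correctly flagged the genuinely delicate step: in an abstract bounded complete perfect metric space there is no Besicovitch lemma, so controlling how many comparably-sized cylinders can meet a small set $U$ must come from the strong separation $\theta_i(C)\cap\theta_j(C)=\emptyset$ together with the distortion bound. In \cite{N-P-L} this is handled by showing that distinct cylinders of a given generation whose diameters are comparable to $|U|$ are pairwise separated by a distance comparable to $|U|$ (strong separation gives a positive lower bound on the gap at generation one, which the distortion estimate propagates), so only boundedly many can meet $U$; your proposal gestures at this but would need to make that separation-propagation argument explicit to be complete.
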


The following lemma is a well-known result, but we sketch the proof because
the lemma with play a crucial role in some of our later arguments.
\begin{lem}
\label{lem:nonneg}
Let $Q$ be a compact Hausdorff space, $X= C_{\R}(Q)$, the Banach space of
continuous, real-valued functions $f: Q \to \R$ in the $\sup$ norm,
\begin{equation*}
K = \{f \in X: f(t) \ge 0 \ \forall t \in Q\}, \text{ and }
\interior(K) = \{f \in X: f(t) > 0 \ \forall t \in Q\}.
\end{equation*}
If $f,g \in X$, write $f \le g$ if $g-f \in K$. Let $L:X \to X$ be a bounded
linear map such that $L(K) \subset K$ and write $r(L):=
\lim_{n \rightarrow \infty} \|L^n\|^{1/n}$, the spectral radius of $L$. If
there
exists $w \in \interior(K)$ such that $Lw \le \beta w$ for some $\beta
\in \R$, then $r(L) \le \beta$. If there exists $v \in K \setminus\{0\}$
such that $L v \ge \alpha v$ for some $\alpha \in \R$, then $r(L) \ge \alpha$.
\end{lem}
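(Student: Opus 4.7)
The plan is to deduce both inequalities from two structural facts: (a) $L$ is order-preserving on real-valued functions, because $f\le g$ means $g-f\in K$ and $L(K)\subset K$ gives $L(g-f)\in K$, i.e.\ $Lf\le Lg$; and (b) the spectral radius formula $r(L)=\lim_{n\to\infty}\|L^n\|^{1/n}$. In both statements the underlying idea is to iterate the hypothesis ($Lw\le \beta w$ or $Lv\ge \alpha v$) to get $L^n w\le \beta^n w$ and $L^n v\ge \alpha^n v$, and then convert these pointwise bounds into estimates on the operator norm $\|L^n\|$ in the sup-norm on $X=C_\R(Q)$.

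For the upper bound, I would first observe that $\beta\ge 0$ is forced: $Lw\in K$ and $w>0$ pointwise, so $Lw\le \beta w$ at a point where $w$ achieves its minimum rules out $\beta<0$. An easy induction then gives $L^{n+1}w=L(L^nw)\le L(\beta^n w)=\beta^n Lw\le \beta^{n+1}w$. Next, by compactness of $Q$ and $w\in\interior(K)$ one can pick constants $0<m\le M$ with $m\le w(t)\le M$ on $Q$. For any $f\in X$ the bound $|f(t)|\le \|f\|_\infty \le (\|f\|_\infty/m)\,w(t)$ yields the sandwich
\begin{equation*}
-\frac{\|f\|_\infty}{m}\,w \;\le\; f \;\le\; \frac{\|f\|_\infty}{m}\,w.
\end{equation*}
Applying $L^n$, using order-preservation and linearity, and then the iterated inequality above,
\begin{equation*}
|L^n f| \;\le\; \frac{\|f\|_\infty}{m}\,L^n w \;\le\; \frac{\|f\|_\infty}{m}\,\beta^n w \;\le\; \frac{M}{m}\,\beta^n\,\|f\|_\infty.
\end{equation*}
Taking sup-norm gives $\|L^n\|\le (M/m)\beta^n$, and the spectral radius formula yields $r(L)\le \beta$.

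For the lower bound, if $\alpha\le 0$ the conclusion is automatic since $r(L)\ge 0$. If $\alpha>0$, the same induction gives $L^n v\ge \alpha^n v\ge 0$ (iterating the inequality is unambiguous because $\alpha>0$ and $L$ is order-preserving). Since $v\neq 0$, $\|v\|_\infty>0$, so $\|L^n v\|_\infty\ge \alpha^n \|v\|_\infty$, whence $\|L^n\|\ge \alpha^n$ and $r(L)\ge \alpha$.

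There is essentially no serious obstacle here; this is a bookkeeping lemma. The only subtle point is the sandwich estimate on arbitrary $f\in X$, which fails without a uniform positive lower bound on $w$ — this is precisely where the compactness of $Q$ and the assumption $w\in\interior(K)$ (rather than merely $w\in K\setminus\{0\}$) are used. The lower bound requires no such hypothesis on $v$, which explains the asymmetry between the two assertions.
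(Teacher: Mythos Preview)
Your proof is correct and follows essentially the same approach as the paper's: iterate the hypothesis to get $L^n w\le\beta^n w$ (respectively $L^n v\ge\alpha^n v$), sandwich arbitrary $f$ between multiples of $w$ using compactness and strict positivity, and read off the norm estimate. The paper routes the sandwich through the constant function $u\equiv 1$ to first establish $r(L)=\lim_k\|L^k w\|^{1/k}$, but this is the same computation; your handling of the case $\alpha\le 0$ is in fact slightly more careful than the paper's.
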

\begin{proof}
  Define $u \in K$ by $u(t) =1 \ \forall t \in Q$. If $f \in X$ and $\|f\| \le
  1$, then $- u \le f \le u$, so $-L^k u \le L^k f \le L^ku$. It follows that
  $\|L^k f\| \le \|L^k u\|$ and this implies $\|L^k\| = \|L^k u\|$ and $r(L) =
  \lim_{k \rightarrow \infty} \|L^k\|^{1/k} = \lim_{k \rightarrow \infty}
  \|L^k u\|^{1/k}$.

If $w \in \interior(K)$, there exist positive constants $c$ and $d$ such
that $c w \le u \le dw$, so, for all positive integers $k$,
\begin{equation*}
c L^k w \le L^k u \le d  L^k w \text{ and } 
c \|L^k w\| \le \|L^k u\| \le d  \|L^k w\|.
\end{equation*}
Taking $k$th roots and letting $k \rightarrow \infty$, we obtain
$r(L) = \lim_{k \rightarrow \infty} \|L^k w\|^{1/k}$. However, if
$L w \le \beta w$, $L^k w \le \beta^k w$, so
$r(L) \le  \lim_{k \rightarrow \infty} \|\beta^k w\|^{1/k} = \beta$.
If $L v \ge \alpha v$ for some $v \in K \setminus\{0\}$, then
 $L^k v \ge \alpha^k v$ for all positive integers $k$ and
$\|L^k\| \|v\| \ge \alpha^k \|v\|$. Taking $k$th roots and letting
$k \rightarrow \infty$, we find that $r(L) \ge \alpha$.
\end{proof}

Note that if we take $Q = \{1, 2, \ldots, N\}$ and identify $C_{\R}(Q)$
with column vectors in $\R^N$, Lemma~\ref{lem:nonneg} gives results
concerning $r(L)$, where $L: \R^N \to \R^N$ is an $N \times N$ matrix with
nonnegative entries, or, more abstractly, a linear map which takes the cone
of vectors $x$ with nonnegative entries into itself.

Lemma~\ref{lem:nonneg} is a special case of much more general results
concerning order-preserving, homogeneous cone mappings: see \cite{X} and also
Lemma 2.2 in \cite{C} and Theorem 2.2 in \cite{B}.  In the important special
case that $L$ is given by an $N \times N$ matrix with non-negative entries,
Lemma~\ref{lem:nonneg} can also be derived from standard results in \cite{D}
concerning nonnegative matrices.  A simple proof in the matrix case we
consider here can also be found in Lemma 2.2 in \cite{hdcomp1}.

Our next lemma is also a well-known result. Because it follows easily from
Lemma~\ref{lem:nonneg}, we leave the proof to the reader.

\begin{lem}
\label{cor-nonneg}
Let notation be as in Lemma~\ref{lem:nonneg}. Suppose that $L_j:X \to X$,
$j=1,2$, are bounded linear maps such that $L_j(K) \subset K$ and $L_1(f) \le
L_2(f)$ for all $f \in K$.  Then it follows that $r(L_1) \le r(L_2)$.  If
there exists $v \in \interior(K)$ with $L v = \lambda v$, then $r(L) =
\lambda$.
\end{lem}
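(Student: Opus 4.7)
The result is a direct consequence of Lemma~\ref{lem:nonneg}. For the inequality $r(L_1) \le r(L_2)$, the plan is to take the distinguished element $u \in \interior(K)$ given by $u(t) \equiv 1$ and to show by induction on $k$ that $L_1^k u \le L_2^k u$ for all $k \ge 0$. The base case $k=0$ is trivial; for the inductive step, I first note that the hypothesis $L_2(K) \subset K$ together with the linearity of $L_2$ implies that $L_2$ is order-preserving on $X$ (since $f \le g$ means $g-f \in K$, and then $L_2(g-f) = L_2 g - L_2 f \in K$). Hence, assuming $L_1^k u \le L_2^k u$,
\[
L_1^{k+1} u \;=\; L_1(L_1^k u) \;\le\; L_2(L_1^k u) \;\le\; L_2(L_2^k u) \;=\; L_2^{k+1} u,
\]
where the first inequality uses the hypothesis $L_1 f \le L_2 f$ applied to $f = L_1^k u \in K$, and the second uses the order-preservation of $L_2$ just established.

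Since both $L_1^k u$ and $L_2^k u$ lie in $K$, taking sup norms preserves the inequality and yields $\|L_1^k u\| \le \|L_2^k u\|$. The proof of Lemma~\ref{lem:nonneg} already established the identity $r(L_j) = \lim_{k \to \infty} \|L_j^k u\|^{1/k}$ for any bounded linear map with $L_j(K) \subset K$, so extracting $k$th roots and letting $k \to \infty$ gives $r(L_1) \le r(L_2)$.

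For the second assertion, if $v \in \interior(K)$ satisfies $Lv = \lambda v$, the hypotheses of Lemma~\ref{lem:nonneg} are met with $w = v$ and $\beta = \lambda$, yielding $r(L) \le \lambda$. Simultaneously, $v \in K \setminus \{0\}$ and the same eigenvalue equation read as $Lv \ge \lambda v$ invokes the second half of Lemma~\ref{lem:nonneg} with $\alpha = \lambda$, giving $r(L) \ge \lambda$. Combining the two bounds yields $r(L) = \lambda$.

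I do not anticipate any serious obstacle: the only minor point to be careful about is the passage from $L_2(K)\subset K$ to order-preservation of $L_2$ on all of $X$, which follows at once from linearity, and the fact that the sup-norm inequality $\|L_1^k u\| \le \|L_2^k u\|$ requires both quantities to be nonnegative functions, which the induction guarantees.
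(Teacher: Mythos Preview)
Your proof is correct and is exactly the kind of argument the paper has in mind: the authors omit the proof entirely, remarking only that the lemma ``follows easily from Lemma~\ref{lem:nonneg},'' and your derivation of both assertions directly from that lemma (via the induction $L_1^k u \le L_2^k u$ and the spectral-radius formula $r(L_j)=\lim_k\|L_j^k u\|^{1/k}$ established there) fills in precisely those details.
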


\section{Iterated Function Systems Associated to Complex
Continued Fractions}
\label{sec:2dexp}

\subsection{The problems}
\label{subsec:problem}

Throughout this section we shall always write $D:= \{(x,y) \in \R^2: (x-1/2)^2
+ y^2 \le 1/4\}$ and $U$ will always denote a bounded, {\it mildly regular}
open subset of $\R^2$ such that $\interior(D) \subset U$ and $x >0$ for all
$(x,y) \in U$, while $H$ will denote $\{(x,y) \in U: y > 0\}$.  By writing
$z = x+ \irm y$, we can consider $D$, $H$, and $U$ as subsets of the complex
plane. If $S \subset \R^2$, we shall use the identification of $\R^2$ with
$\C$ and say that $S$ is symmetric under conjugation if $S=\{\bar z: z \in
S\}$, where $\bar z$ denotes the complex conjugate of $z$.

In this section, $\B$ will always denote a finite or countable
infinite subset of $\{w \in \C:= \R^2: \Re(w) \ge 1\}$, and for $b \in
\B$, $\theta_b$ will denote the M\"obius transform $z \mapsto 1/(z+b):
= \theta_b(z)$.  If $G:=\{z \in \C: \Re(z) \ge 0\}$, the reader can
check that for all $b \in \B$, $\theta_b(G) \subset D \setminus
\{0\}$; and if $b, c \in \B$ satisfy $\Re(b) \ge \gamma \ge 1$ and
$\Re(c) \ge \gamma \ge 1$, then $\theta_b \circ \theta_c: G \mapsto D
\setminus \{0\}$ is a Lipschitz map (with respect to the Euclidean
metric) with Lipschitz constant $\Lip(\theta_b \circ \theta_c) \le
(\gamma^2 +1)^{-2}$ (see Lemma~\ref{lem:4.1} below). We shall always
write $I_1 : = \{\be = m + n\irm: m \in \N, n \in \Z\}$ and the case
that $\B \subset I_1$ will be of particular interest.

We shall denote by $C_{\C}(\bar U)$ (respectively, $C_{\R}(\bar U)$) the 
Banach space of continuous maps
$f:\bar U \to \C$ (respectively, $f:\bar U \to \R$) with
$\|f\| = \max\{|f(z)|: z \in \bar U\}$. (Note that $\bar U$ will always
denote the closure of $U$ and {\it not} the image of $U$ under complex
conjugation.) If $\B$ is a finite set
and $s > 0$, one can define a bounded, complex linear map
$L_s: C_{\C}(\bar U) \to C_{\C}(\bar U)$ by
\begin{equation}
\label{Lsdefz}
(L_sf)(z) = \sum_{\be \in \B} \Big|\frac{d}{dz} \theta_{\be}(z)\Big|^s
f(\theta_{\be}(z)) = \sum_{\be \in \B} \frac{f(\theta_{\be}(z))}{|z+\be|^{2s}}.
\end{equation}
Equation \eqref{Lsdefz} also defines a bounded, real linear map
of $C_{\R}(\bar U) \to C_{\R}(\bar U)$, which (abusing notation) we shall
also denote by $L_s$. We shall denote by $\sigma(L_s)$ the spectrum
of $L_s: C_{\C}(\bar U) \to C_{\C}(\bar U)$.

If $\B$ is infinite, one can prove (see Section 5 of \cite{A} and
\cite{N-P-L}) that if, for some $s >0$, the infinite series $\sum_{\be \in \B}
[1/|\be|^{2s}]$ converges, then $\sum_{\be \in \B} [1/|z+\be|^{2s}]$ converges
for all $z \in \bar U$ and gives a continuous function on $\bar U$.  It then
follows with the aid of Dini's theorem that $L_s$ given by \eqref{Lsdefz}
defines a bounded linear map of $C_{\C}(\bar U)$ to itself.
If we define $\tau = \tau(\B):= \inf\{s >0 : \sum_{\be \in \B}
[1/|\be|^{2s}] < \infty\}$ (where we allow $\tau(\B) = \infty$), it follows
from the above remarks that for all $s > \tau(\B)$, $L_s$ 
gives a bounded linear map of $C_{\C}(\bar U)$ to itself.  If
$s= \tau$, it may or may not happen that $\sum_{\be \in \B} [1/|\be|^{2s}]
< \infty$. In any event, we shall show that if
$s > 1$, $\sum_{\be \in \B} [1/|\be|^{2s}] < \infty$.

Our goal in the section is to describe how to obtain rigorous upper and lower
bounds for $r(L_s)$, the spectral radius of the operator $L_s$ in
\eqref{Lsdefz}, and then to indicate how such bounds enable us to rigorously
estimate the Hausdorff dimension of some interesting sets.
To avoid interrupting the narrative flow, we first list some results which we
shall need, but whose proofs will be deferred to Sections~\ref{sec:exist}
and ~\ref{sec:mobius}. If $\alpha \ge 0$, $R >0$, and $\B$ is as before, we
define
\begin{equation*}
\B_R = \{\be \in \R : |\be| \le R \} \qquad \text{and}
\qquad   \B_R^{\prime} = \{\be \in \R : |\be| > R \}.
\end{equation*}
If $\B$ is finite, we shall usually take $R \ge \sup\{|\be|: \be \in \B\}$,
so $\B_R=\B$.  We define $L_{s,R,\alpha}: C_{\C}(\bar U) \to C_{\C}(\bar
U)$ by
\begin{equation}
\label{Lsdefza}
(L_{s,R, \alpha}f)(z) = \sum_{\be \in \B_R} 
\frac{f(\theta_{\be}(z))}{|z+\be|^{2s}} + \alpha f(0).
\end{equation}

\begin{thm}
\label{thm:thm3.1}
Assume that $\B$ is finite and $\Re(\be) \ge \gamma \ge 1$ for all $\be \in
\B$. For each $s \ge 0$, there exists a unique (to within scalar multiples)
strictly positive continuous eigenfunction $w_s \in C_{\R}(\bar U)$ with
positive eigenvalue $r(L_{s,R, \alpha})$ defined by
$r(L_{s,R, \alpha}):= \lim_{k \rightarrow   \infty} \|L_{s,R, \alpha}^k\|^{1/k}$. 
(Of course $w_s$ also depends on $\alpha$ and $R$, but we view $\alpha$ and $R$
as fixed and omit the dependence in our notation.) If $\B$ and $U$ are
symmetric under conjugation, then $w_s(\bar z) = w_s(z)$ for all $z \in \bar
U$. In general, identifying $(x,y) \in \R^2$ with $x+ iy \in \C$,
$w_s(x,y)$ is $C^{\infty}$ on $\bar U$ and the following estimates hold.
\begin{align}
\label{wrelation}
w_s(z_0) &\le w_s(z_1) \exp[(\sqrt{5}s/\gamma)|z_1-z_0|], \quad z_0,z_1 \in
\bar U,
\\
\label{wrelation2}
w_s(x_1,y) &\ge w_s(x_2,y) \ge w_s(x_1,y)\exp[(-2s/\gamma)(x_2-x_1)], 
\\
\notag
& \qquad 0 \le x_1 \le x_2, \quad (x_1,y), (x_2,y) \in \bar U,
\\
\label{wrelation3}
w_s(x,y_1) &\le w_s(x,y_2) \exp[(s/\gamma)|y_1-y_2|], 
\quad (x,y_1), (x,y_2) \in \bar U,
\\
\label{Dxxbound}
-\frac{s}{4\gamma^2(s+1)} w_s(x,y) &\le  D_{xx} w_s(x,y) \le 
\frac{2s(2s+1)}{\gamma^2} w_s(x,y),
\\
\label{Dyybound}
-\frac{2s}{\gamma^2} w_s(x,y) &\le  D_{yy} w_s(x,y) 
\le \frac{2s(2s+1)}{4\gamma^2} w_s(x,y).
\end{align}
\end{thm}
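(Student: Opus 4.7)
The plan is to handle the qualitative parts of the theorem (existence, uniqueness, regularity, symmetry) by invoking the abstract Krein--Rutman framework recalled in Theorem~\ref{thm:1.1}, and then to extract the explicit derivative bounds \eqref{wrelation}--\eqref{Dyybound} directly by differentiating the eigenvalue equation and exploiting the analyticity of the $\theta_\be$.

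First, since $\B_R$ is finite, each $\theta_\be$ is an analytic contraction on a neighborhood of $\bar U$ with $|\theta_\be'(z)| = |z+\be|^{-2} \le \gamma^{-2}$, and each $g_\be(z):=|z+\be|^{-2s}$ is strictly positive and $C^\infty$ on $\bar U$ (note $-\be \notin \bar U$). Thus $L_{s,R,\alpha}$ maps $C_\R(\bar U)$ into $C^\infty(\bar U)$ and is in particular compact; it preserves the cone $K$ of nonnegative functions, and the rank-one term $f \mapsto \alpha f(0)$ together with strict positivity of the $g_\be$ supplies the irreducibility hypothesis of Theorem~\ref{thm:1.1}. This yields a unique (up to scaling) strictly positive eigenfunction $w_s \in \interior(K)$ with eigenvalue equal to $r(L_{s,R,\alpha})$. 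Bootstrapping the equation $\lambda w_s = L_{s,R,\alpha} w_s$ gives $w_s \in C^\infty(\bar U)$. When $\B$ and $U$ are conjugation-symmetric, the function $z \mapsto w_s(\bar z)$ is also a strictly positive eigenfunction with the same eigenvalue (using $|z+\be|=|\bar z+\bar\be|$, $\overline{\theta_\be(z)} = \theta_{\bar\be}(\bar z)$, and $\B=\bar\B$); uniqueness forces $w_s(\bar z)=w_s(z)$.

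For the gradient bounds \eqref{wrelation}--\eqref{wrelation3}, I would differentiate the eigenvalue equation and write, with the probability weights $p_\be(z):= g_\be(z) w_s(\theta_\be(z))/(\lambda w_s(z))$ (which sum to at most $1$, with slack coming from the nonnegative constant $\alpha w_s(0)/(\lambda w_s(z))$),
\[
\frac{D_x w_s(z)}{w_s(z)} = \sum_\be p_\be(z)\Bigl[\frac{D_x g_\be(z)}{g_\be(z)} + \frac{D_x[w_s\circ\theta_\be](z)}{w_s(\theta_\be(z))}\Bigr],
\]
and similarly in $y$. A short computation gives $D_x g_\be/g_\be = -2s\,\Re(z+\be)/|z+\be|^2$ and $D_y g_\be/g_\be = -2s\,\Im(z+\be)/|z+\be|^2$, yielding $|D_x g_\be/g_\be| \le 2s/\gamma$ and the corresponding estimate in $y$. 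The second summand contains a factor $|\theta_\be'(z)| \le \gamma^{-2}$ via the chain rule (using Cauchy--Riemann for $\theta_\be$); iterating the inequality along line segments in $\bar U$ and summing the geometric series produces uniform bounds on $|\nabla \log w_s|$, which integrate to the claimed exponential ratio bounds. The constant $\sqrt 5$ in \eqref{wrelation} emerges as $\sqrt{2^2+1^2}$ when the coordinate bounds are combined.

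For the second-order estimates \eqref{Dxxbound} and \eqref{Dyybound}, I would apply $D_{xx}$ (respectively $D_{yy}$) to the eigenvalue equation to obtain an identity for $D_{xx}w_s/w_s$ as a $p_\be$-weighted combination of $D_{xx}g_\be/g_\be$, the mixed term $2(D_xg_\be/g_\be)(D_x[w_s\circ\theta_\be]/w_s(\theta_\be))$, and $D_{xx}[w_s\circ\theta_\be]/w_s(\theta_\be)$; the last term involves $|\theta_\be'|^2 \le \gamma^{-4}$ and is absorbed. Direct calculation gives $D_{xx}g_\be/g_\be = 2s(2s+2)\Re(z+\be)^2/|z+\be|^4 - 2s/|z+\be|^2$; maximizing the quadratic in $\Re(z+\be)$ subject to $\Re(z+\be)\ge\gamma$ produces the upper bound $2s(2s+1)/\gamma^2$, and minimizing (via completion of the square in $\Re(z+\be)/|z+\be|^2$) produces the lower bound $-s/(4\gamma^2(s+1))$; the corresponding calculation for $D_{yy}g_\be/g_\be$ gives \eqref{Dyybound}. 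The main obstacle is precisely this bookkeeping: one must show that the mixed and chain-rule terms are genuinely dominated by the $D^2 g_\be/g_\be$ contribution once the previously obtained first-order bound on $|\nabla \log w_s|$ is plugged in, so that the extremization of the $g_\be$-quadratics yields the sharp constants claimed.
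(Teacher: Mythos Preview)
Your qualitative arguments (existence, uniqueness, $C^\infty$ regularity, conjugation symmetry) are fine and match the paper's route through Theorem~\ref{thm:1.1}. The gap is in the quantitative bounds: your direct-differentiation approach cannot produce the sharp constants stated. For first order, differentiating the eigenvalue equation once and bootstrapping yields at best $M \le 2s/\gamma + M\gamma^{-2}$ for $M := \sup|\partial_x \log w_s|$, hence $M \le 2s\gamma/(\gamma^2-1)$, which is strictly weaker than $2s/\gamma$ and fails outright when $\gamma = 1$ (which the theorem allows); it also loses the sign $\partial_x w_s \le 0$ behind \eqref{wrelation2}. For second order you yourself flag the obstacle: the mixed term and the chain-rule term are nonzero, and since the constants in \eqref{Dxxbound}--\eqref{Dyybound} are \emph{exactly} the extremal values of $D_{xx}g_\be/g_\be$ and $D_{yy}g_\be/g_\be$ that you computed, any leftover contribution from those terms spoils the claimed inequalities.

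The paper's mechanism is different and hinges on a special feature of M\"obius maps. Instead of differentiating the eigenvalue equation, it uses \eqref{1.14}: $D^\alpha w_s/w_s$ is the uniform limit of $D^\alpha\bigl(\sum_{\omega\in\B_\nu} g_\omega\bigr)\big/\sum_{\omega\in\B_\nu} g_\omega$. The key fact (Lemma~\ref{lem:4.3}) is that a composition $\theta_{\tilde\omega}$ is again M\"obius with $|d\theta_{\tilde\omega}/dz|^s = c_\omega\,|z+\beta_\omega|^{-2s}$ and $\Re(\beta_\omega)=\Re(B_\nu/B_{\nu-1}) \ge \gamma$. Thus every $D^\alpha g_\omega/g_\omega$ obeys the \emph{same} pointwise bounds as $D^\alpha G(u,v;s)/G(u,v;s)$ with $u\ge\gamma$ (Lemmas~\ref{lem:4.7}--\ref{lem:4.8}), and the ratio of sums, being a convex combination of these, inherits the bounds with no chain-rule residue to absorb. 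The $\alpha f(0)$ summand contributes only constant $g_\omega$, for which $D^\alpha g_\omega/g_\omega = 0$ lies inside every claimed interval (Remark~\ref{rem:8.3}). Closure of M\"obius maps under composition is the missing idea.
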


\begin{thm}
\label{thm:thm3.2}
Assume that $\B$ is infinite and that $s >0$ satisfies $\sum_{\be \in \B}
[1/|\be|^{2s}] < \infty$.  Then $L_s$ has a unique (to within
scalar multiples) strictly positive eigenfunction $v_s \in C_{\R}(\bar U)$
with positive eigenvalue $r(L_{s})$. This eigenfunction is Lipschitz and
satisfies \eqref{wrelation}, \eqref{wrelation2}, and \eqref{wrelation3}.  If
$\B$ and $U$ are symmetric under conjugation, then $v_s(\bar z) = v_s(z)$ for
all $z \in U$.
\end{thm}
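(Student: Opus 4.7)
The plan is to realize $v_s$ as a limit of the eigenfunctions provided by Theorem~\ref{thm:thm3.1} for an increasing exhaustion of $\B$. Fix an exhaustion $\B_1 \subset \B_2 \subset \cdots$ of $\B$ by finite subsets with $\bigcup_n \B_n = \B$, and choose $R_n$ so that $\B_{R_n} = \B_n$; if $\B$ is symmetric under conjugation, I would arrange the exhaustion to be symmetric as well. Applying Theorem~\ref{thm:thm3.1} with $\alpha = 0$ and $\gamma = 1$ produces strictly positive continuous eigenfunctions $w_n \in C_{\R}(\bar U)$ of $L_{s,R_n,0}$ with positive eigenvalues $\mu_n := r(L_{s,R_n,0})$, which I would normalize by $w_n(z^*) = 1$ at a fixed $z^* \in \bar U$.

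The estimates \eqref{wrelation}--\eqref{wrelation3} hold for each $w_n$ with constants depending only on $s$ and $\gamma = 1$, so $\{w_n\}$ is uniformly Lipschitz on $\bar U$; combined with the normalization this gives uniform upper and lower positive bounds. By Arzelà-Ascoli, some subsequence $w_{n_k}$ converges uniformly on $\bar U$ to a strictly positive Lipschitz function $v_s$, and the pointwise inequalities \eqref{wrelation}--\eqref{wrelation3} pass to the limit. When $\B$ and $U$ are symmetric, the symmetry $w_n(\bar z) = w_n(z)$ is inherited by $v_s$.

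Next I would pass to the limit in the eigenvalue equation. The monotonicity $L_{s,R_n,0} \le L_{s,R_{n+1},0} \le L_s$ on the positive cone and Lemma~\ref{cor-nonneg} give $\mu_n \uparrow \mu^*$ with $\mu^* \le r(L_s) < \infty$. The assumption $\sum_{\be \in \B}|\be|^{-2s} < \infty$ makes the partial sums $z \mapsto \sum_{\be \in \B_n}|z+\be|^{-2s}$ a monotone sequence of continuous functions with continuous limit $\sum_{\be \in \B}|z+\be|^{-2s}$, so Dini's theorem gives uniform convergence of these kernels on $\bar U$. Together with the uniform convergence $w_{n_k} \to v_s$, this lets me take $k \to \infty$ in $(L_{s,R_{n_k},0} w_{n_k})(z) = \mu_{n_k} w_{n_k}(z)$ to conclude $L_s v_s = \mu^* v_s$. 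Since $v_s \in \interior(K)$, Lemma~\ref{cor-nonneg} then yields $\mu^* = r(L_s) > 0$.

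The main obstacle is uniqueness. Suppose $\tilde v$ is another strictly positive continuous eigenfunction of $L_s$, necessarily with eigenvalue $r(L_s)$ by Lemma~\ref{cor-nonneg}. With $c := \inf_{\bar U} v_s/\tilde v > 0$, the function $h := v_s - c\tilde v$ is nonnegative, vanishes somewhere, and satisfies $L_s h = r(L_s) h$. The crux is a primitivity statement: if $h \ne 0$ then some iterate $L_s^N h$ is strictly positive on $\bar U$, contradicting the vanishing of $h$ since $(L_s^N h)(z) = r(L_s)^N h(z)$. I would establish this by showing that for every $z \in \bar U$ and every open set $V \subset \bar U$ on which $h > 0$, there exist $N$ and $\be_1,\ldots,\be_N \in \B$ with $\theta_{\be_1}\circ\cdots\circ\theta_{\be_N}(z) \in V$, using the contraction properties of the $\theta_{\be}$ and the clustering of orbits near the limit set of the IFS. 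An attractive fallback, should this topological step prove delicate, is to reduce uniqueness in the infinite case to the finite case of Theorem~\ref{thm:thm3.1}: two positive eigenfunctions of $L_s$ would yield a pair of normalized approximate eigenfunctions of $L_{s,R_n,0}$ for large $n$, and a Kre\u\i n-Rutman perturbation argument together with the finite uniqueness statement of Theorem~\ref{thm:thm3.1} would force them to agree up to scalar in the limit. Once uniqueness is known, every convergent subsequence of $\{w_n\}$ has the same limit, so $w_n \to v_s$ uniformly on $\bar U$.
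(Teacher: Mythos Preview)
Your existence argument is essentially the paper's own (carried out in Theorem~\ref{thm:thm5.10}): exhaust $\B$ by finite sets, apply the finite case to obtain normalized eigenfunctions that are uniformly Lipschitz via \eqref{wrelation}, extract a uniform limit by Arzel\`a--Ascoli, use monotonicity for the eigenvalues and Dini for the kernels to pass to the limit in the eigenvalue equation, and identify the limiting eigenvalue with $r(L_s)$ via Lemma~\ref{cor-nonneg}. The conjugation symmetry is handled identically. For uniqueness the paper does not argue directly but invokes the general theory of positive transfer operators (Theorem~5.3 of \cite{A}).

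Your self-contained uniqueness argument has a genuine gap. The primitivity step asserts that for every $z\in\bar U$ the orbit $\{\theta_{\be_1}\circ\cdots\circ\theta_{\be_N}(z)\}$ enters any prescribed open set $V$ on which $h>0$. But once $h(z_0)=0$, the eigenvalue equation forces $h(\theta_\omega(z_0))=0$ for every word $\omega$, and since these orbit points are dense in the invariant set $C$, continuity gives $h\equiv 0$ on $C$. Thus $\{h>0\}$ is \emph{disjoint} from $C$, while the orbits $\theta_\omega(z)$ accumulate only on $C$ as $|\omega|\to\infty$; for large $N$ they lie in any prescribed neighborhood of $C$ and hence miss $V$. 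So the mechanism you propose cannot work as stated.

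The fix is to exploit the contraction directly rather than chase orbits into $V$. With $h(\theta_\omega(z_0))=0$ for all $\omega$ and the uniform estimate \eqref{1.8}, one has $|\theta_\omega(z)-\theta_\omega(z_0)|\le Mc^{N}\diam(\bar U)$ for every $|\omega|=N$; uniform continuity of $h$ on $\bar U$ then gives $h(\theta_\omega(z))<\epsilon$ for all such $\omega$ once $N$ is large. Writing $m=\min_{\bar U} v_s>0$,
\[
h(z)=\lambda^{-N}\sum_{|\omega|=N} g_\omega(z)\,h(\theta_\omega(z))
\;\le\;\frac{\epsilon}{m}\,\lambda^{-N}\sum_{|\omega|=N} g_\omega(z)\,v_s(\theta_\omega(z))
\;=\;\frac{\epsilon}{m}\,v_s(z),
\]
and letting $\epsilon\to 0$ yields $h\equiv 0$. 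Your fallback perturbation sketch is too vague to stand on its own.
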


\begin{thm}
\label{thm:thm3.3}
Let assumptions and notation be as in Theorem~\ref{thm:thm3.2} and assume that
$R >2$. Then there exist (see Theorems~\ref{thm:thm5.12} and
\ref{thm:thm5.13}) real numbers $\eta_{s,R} \ge 0$ and $\delta_{s,R} >0$ such
that
\begin{equation*}
\eta_{s,R} v_s(0) \le \sum_{\be \in \B, |\be| >R} \frac{v_s(\theta_{\be}(z))}
{|z+b|^{2s}} \le \delta_{s,R} v_s(0).
\end{equation*}
If $\B = I_1$ or $\B = I_2:= \{m+n \irm : m \in \N, n \in \Z, n <0\}$
and $s >1$, explicit estimates for $\eta_{s,R}$ and $\delta_{s,R}$ are given
in Theorems~\ref{thm:thm5.12} and \ref{thm:thm5.13}. If $\alpha = \delta_{s,R}$,
\begin{equation}
\label{specrel1}
r(L_s) \le r(L_{s,R, \alpha});
\end{equation}
and if $\alpha = \eta_{s,R}$,
\begin{equation}
\label{specrel2}
r(L_{s,R, \alpha}) \le r(L_s). 
\end{equation}
\end{thm}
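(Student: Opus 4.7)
The plan is to deduce both spectral-radius inequalities directly from the comparison principle in Lemma~\ref{lem:nonneg}, using the strictly positive eigenfunction $v_s$ of $L_s$ supplied by Theorem~\ref{thm:thm3.2} as the test function. The existence of numbers $\eta_{s,R},\delta_{s,R}$ satisfying the sandwiching inequality is essentially a matter of continuity: since $v_s$ is continuous and strictly positive on $\bar U$, the function $z\mapsto \sum_{|\be|>R} v_s(\theta_{\be}(z))/|z+\be|^{2s}$ is a bounded continuous function on $\bar U$ (the series converges uniformly by the summability hypothesis and the bounds on $\theta_{\be}$), so one may simply take $\delta_{s,R}$ to be its supremum divided by $v_s(0)$ and $\eta_{s,R}$ to be its infimum divided by $v_s(0)$; the sharper, explicit values for $\B = I_1$ or $\B = I_2$ are deferred to Section~\ref{sec:mobius}.

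Next I would prove \eqref{specrel1}. Splitting the defining sum for $L_s$ by the cutoff $|\be|\le R$ versus $|\be|>R$, one has, for every $z\in \bar U$,
\begin{equation*}
r(L_s)\,v_s(z) = (L_s v_s)(z) = \sum_{\be\in \B_R}\frac{v_s(\theta_\be(z))}{|z+\be|^{2s}} + \sum_{\be\in \B,\,|\be|>R}\frac{v_s(\theta_\be(z))}{|z+\be|^{2s}}.
\end{equation*}
With $\alpha=\delta_{s,R}$, the upper bound on the tail gives $r(L_s)\,v_s \le L_{s,R,\alpha} v_s$ pointwise on $\bar U$. Since $v_s$ is strictly positive, it lies in $\interior(K)$ (for $K\subset C_{\R}(\bar U)$ the usual positive cone), and the second half of Lemma~\ref{lem:nonneg} applied to $L_{s,R,\alpha}$ yields $r(L_{s,R,\delta_{s,R}})\ge r(L_s)$, which is \eqref{specrel1}.

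The proof of \eqref{specrel2} is symmetric. Taking $\alpha=\eta_{s,R}$ and using the lower bound on the tail in the same decomposition, one gets
\begin{equation*}
(L_{s,R,\eta_{s,R}} v_s)(z) = \sum_{\be\in \B_R}\frac{v_s(\theta_\be(z))}{|z+\be|^{2s}} + \eta_{s,R}\,v_s(0) \le (L_s v_s)(z) = r(L_s)\,v_s(z).
\end{equation*}
Again using that $v_s\in \interior(K)$, the first half of Lemma~\ref{lem:nonneg} applied to $L_{s,R,\eta_{s,R}}$ with $\beta=r(L_s)$ gives $r(L_{s,R,\eta_{s,R}})\le r(L_s)$.

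The only real subtlety, and the one thing worth flagging, is that Lemma~\ref{lem:nonneg} requires the test function to belong to $\interior(K)$ in the upper-bound direction. Theorem~\ref{thm:thm3.2} guarantees $v_s>0$ on $\bar U$, so this is automatic; one should however double-check that $v_s$ is continuous at $0$ (it is, since $0\in\bar U$ and Theorem~\ref{thm:thm3.2} asserts Lipschitz regularity) and that $v_s(0)>0$, so that the constants $\eta_{s,R}$ and $\delta_{s,R}$ make sense as scalar multipliers of $v_s(0)$ in the inequality. With those observations both \eqref{specrel1} and \eqref{specrel2} are immediate; the remaining content of the theorem — the quantitative values of $\eta_{s,R}$ and $\delta_{s,R}$ in the case $\B\subset I_1$ — is the substantive analytic work and is handled by Theorems~\ref{thm:thm5.12} and \ref{thm:thm5.13}.
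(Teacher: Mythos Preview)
Your proposal is correct and follows essentially the same route as the paper: the paper's proof (given as Theorem~\ref{thm:thm5.11}) splits the eigenfunction equation $L_s v_s = r(L_s) v_s$ at the cutoff $|\be|=R$, uses the assumed tail bounds to sandwich $L_{s,R-}v_s \le r(L_s)v_s \le L_{s,R+}v_s$, and then invokes Lemma~\ref{lem:nonneg} with the strictly positive $v_s$ to conclude. Your additional remark that mere existence of $\eta_{s,R},\delta_{s,R}$ follows from continuity and positivity of $v_s$ (before invoking the explicit estimates of Theorems~\ref{thm:thm5.12}--\ref{thm:thm5.13}) is a helpful observation not spelled out in the paper, but the core argument is identical.
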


If $\B$ is finite, we shall usually assume that $|\be| \le R$ for all
$\be \in \B$ and take $\alpha =0$.  If $\B$ is infinite, we take $R$ large
and use \eqref{specrel1} and \eqref{specrel2} to estimate $r(L_s)$.  In all
cases our problem reduces to finding a procedure which gives rigorous
upper and lower bounds for operators $L_{s,R,\alpha}$, where
$\alpha = \delta_{s,R}$ or $\alpha = \eta_{s,R}$, or $\alpha =0$.

If $\B$ and $U$ are symmetric under conjugation, let $H$ be as defined at the
beginning of this section and let $\bar H$ denote the closure of $H$.  Let $Y=
\{f \in C_{\C}(\bar U) : f(z) = f(\bar z), z \in \bar U\}$, so $Y$ is a
complex Banach space, and one can check that $Y$ is linearly isometric to
$C_{\C}(\bar H)$ by $f \in Y \mapsto f|_{\bar H} \in C_{\C}(\bar H)$ and $g
\in C_{\C}(\bar H) \mapsto \tilde g \in Y$, where $\tilde g(z) = g(z)$ if $z
\in \bar H$ and $\tilde g(z) = g(\bar z)$ if $z \in \bar U$ and $z \notin \bar
H$. In the notation of Theorem~\ref{thm:thm3.2}, $w_s \in Y$, and the reader
can check that $L_{s,R,\alpha}$ maps $Y$ into $Y$, Equivalently,
$L_{s,R,\alpha}$ can be viewed as a bounded linear map of $C_{\C}(\bar H)$ to
$C_{\C}(\bar H)$ by defining $f(1/(z+b)) = f(1/(\bar z + \bar b))$ if
$\Im(z+b) \ge 0$ and $f(1/(z+b)) = f(1/(z+b))$ if $\Im(z+b) \le 0$.  This
observation will simplify the numerical analysis in later examples.

If $\Im(\be) \le -1$ for all $\be \in \B$ (but without the assumption that
$\B$ and $U$ are symmetric under conjugation) and if $\Im(z) \le 1$ for
all $z \in \bar U$, one can easily verify that $\theta_{\be}(z) \in \bar H$
for all $\be \in \B$ and $z \in \bar U$.  Thus, again in this case one
can consider $L_{s,R,\alpha}$ as a map of $C_{\C}(\bar H)$ to itself,
which again will simplify the numerical analysis.

We now briefly discuss the connection of
Theorems~\ref{thm:thm3.1}-\ref{thm:thm3.3} to the problem of computing the
Hausdorff dimension of certain sets.


If $\B \subset I_1$, let $\B_{\infty} = \{\w = (\be_1, \ldots, \be_k, \ldots)
: \be_j \in \B \ \forall j \ge 1\}$.  Given $z \in D$ and
$\w  =(\be_1, \ldots, \be_k, \ldots) \in \B_{\infty}$, one can prove that
$\lim_{k \rightarrow \infty}(\theta_{\be_1} \circ \theta_{\be_2} \circ \cdots \circ 
\theta_{\be_k})(z) := \pi(\w) \in D$ exists and is independent of $z$.
Define $C = \{\pi(\w) : \w \in \B_{\infty}\}$.  It is not hard to
prove that $C = \cup_{\be \in \B} \theta_{\be}(C)$.  In general $C$ is
not compact, but if $\B$ is finite, $C$ is compact and is the unique
compact, nonempty set $C$ such that $C = \cup_{\be \in \B} \theta_{\be}(C)$.
We shall call $C$ the invariant set associated to $\B$.

\begin{thm}
  \label{thm:thm3.4} (See Section 5 of \cite{N-P-L}) Let $\B$ be a subset of
$I_1$, let $L_s: C_{\R}(\bar U) \to C_{\R}(\bar U)$ be defined by
\eqref{Lsdefz} for $s > \tau(\B)$, and let $C$ be the invariant set
associated to $\B$. The Hausdorff dimension $s_*$ of $C$ is given by $s_* =
\inf\{s >0 : r(L_s) = \lambda_s <1\}$ and $r(L_{s_*}) = 1$ if $\B$ is
finite or $L_{s_*}$ is defined.  The map $s \mapsto \lambda_s$ is a
continuous, strictly decreasing function for $s > \tau(\B)$.
\end{thm}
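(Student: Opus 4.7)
My plan is to handle the finite and infinite cases separately, using the preparatory machinery already in place, and then establish the monotonicity and continuity of $s \mapsto \lambda_s$ by exploiting the strictly positive eigenfunction from Theorem~\ref{thm:thm3.2} together with Lemmas~\ref{lem:nonneg} and \ref{cor-nonneg}.

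In the finite case, I would invoke Theorem~1.2 of \cite{N-P-L}, quoted earlier in the excerpt, applied to the IFS $\{\theta_{\be}: \be \in \B\}$ on the compact set $C \subset \bar D$. Its hypotheses are verifiable: each $\theta_{\be}(z) = (z+\be)^{-1}$ extends analytically to a neighborhood of $\bar D$, so it is an infinitesimal similitude with $(D\theta_{\be})(z) = |z+\be|^{-2}$, a strictly positive and $C^{\infty}$ (hence H\"older) function on $\bar U$; the Lipschitz constant $c_{\be} \le |\be|^{-2} \le 1$ is uniformly strictly less than $1$ because $\Re(\be) \ge 1$ and one can pass to the compositions $\theta_{\be} \circ \theta_{\be'}$ using Lemma~\ref{lem:4.1}; injectivity of $\theta_{\be}$ on $C$ is automatic for a M\"obius map; and the disjointness condition $\theta_{\be}(C) \cap \theta_{\be'}(C) = \emptyset$ for $\be \neq \be'$ follows from the geometry of M\"obius transforms on the disk (distinct $\be \in I_1$ send $\bar G$ into pairwise disjoint closed subdisks of $\bar D$ which are separated away from each other). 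The conclusion $r(L_{s_*}) = 1$ for $s_* = \dim_H C$ then follows directly.

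For infinite $\B$, the strategy is to exhaust $\B$ by an increasing sequence of finite subsets $\B_1 \subset \B_2 \subset \cdots$ with $\bigcup_n \B_n = \B$. Writing $C_n := C(\B_n)$ and $s_n := \dim_H C_n$, the finite case yields $r(L_{s_n, \B_n}) = 1$. Monotonicity of Hausdorff dimension gives $s_n \le s_*$, and for the reverse inequality one shows that the residual set $C(\B) \setminus \bigcup_n C_n$, consisting of points whose codings genuinely use infinitely many symbols, has zero $s$-dimensional Hausdorff measure for every $s > \sup_n s_n$; this step uses tail estimates provided by Theorem~\ref{thm:thm3.3}, where $\delta_{s,R} \to 0$ as $R \to \infty$ on any compact $s$-interval lying in $(\tau(\B), \infty)$. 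The corresponding passage on the operator side, $r(L_{s, \B_n}) \nearrow r(L_{s, \B})$, follows from monotone convergence applied to the series defining $L_s v$ for $v \in K$ combined with Lemma~\ref{cor-nonneg}. Matching the two limits yields $r(L_{s_*, \B}) = 1$ whenever $s_* > \tau(\B)$, i.e., whenever $L_{s_*}$ is defined.

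For strict monotonicity of $s \mapsto \lambda_s$, observe that $|z + \be| \ge 1$ for $z \in \bar U$ and $\be \in \B \subset I_1$, with strict inequality on an open subset of $\bar U$ for at least one $\be$. Hence for $s < s'$ the inequality $L_{s'} f \le L_s f$ holds pointwise for every $f \in K$, and applied to the strictly positive eigenfunction $v_s$ of Theorem~\ref{thm:thm3.2}, compactness of $\bar U$ produces $\eta < 1$ with $L_{s'} v_s \le \eta \lambda_s v_s$; Lemma~\ref{lem:nonneg} then gives $\lambda_{s'} \le \eta \lambda_s < \lambda_s$. Continuity of $s \mapsto \lambda_s$ follows because the series defining $L_s$ converges uniformly in $s$ on compact subintervals of $(\tau(\B), \infty)$, so $s \mapsto L_s$ is norm-continuous, and $\lambda_s$ is an algebraically simple isolated eigenvalue (Theorem~\ref{thm:1.1} and the discussion following \eqref{intro1.3}), so standard analytic perturbation theory yields continuity.

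The main obstacle I anticipate is the infinite case: $C(\B)$ need not be compact, so $C(\B)$ cannot simply be recovered as a union of the compact invariant sets $C_n$, and one must separately control the thin residual set of codings using infinitely many symbols in order both to identify $s_* = \lim_n s_n$ and to pass to the limit in $r(L_{s_n, \B_n}) = 1$. This is exactly where the uniform tail estimates of Theorem~\ref{thm:thm3.3} become essential, and verifying the required tail decay of $\delta_{s,R}$ as $R \to \infty$ (deferred to Section~\ref{sec:mobius}) is the technical heart of the argument.
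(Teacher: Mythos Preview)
The paper does not supply a proof of Theorem~\ref{thm:thm3.4}; it is stated with a citation to Section~5 of \cite{N-P-L} and used as a black box. For the continuity and strict decrease of $s\mapsto\lambda_s$ the paper does argue separately, in Section~\ref{sec:logconvex}: log-convexity (Theorem~\ref{thm:2.1}) gives continuity, and strict decrease (Theorem~\ref{thm:2.2}) is deduced from hypothesis~(H7.1), which in the present setting is verified with $\mu=2$ via Lemma~\ref{lem:4.1}.

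Your sketch is broadly reasonable, but the strict-monotonicity step has a gap. You claim that compactness of $\bar U$ yields $\eta<1$ with $L_{s'}v_s\le\eta\,\lambda_s v_s$; this can fail for a single application of $L_s$, since $|z+\be|=1$ is possible (take $z=0$, $\be=1$), and when $\B=\{1\}$ one has $L_{s'}v_s(0)/L_s v_s(0)=1$ exactly, so no uniform $\eta<1$ exists. The clean fix is to iterate: compare $L_{s'}^2$ with $L_s^2$ and invoke Lemma~\ref{lem:4.1}, which gives $g_\omega(z)=|d\theta_\omega/dz|\le(\gamma^2+1)^{-2}<1$ uniformly for every $\omega\in\B_2$ and every $z$ with $\Re z\ge 0$. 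This is precisely~(H7.1) and is how the paper obtains strict decrease in Theorem~\ref{thm:2.2}. Your treatment of the infinite case by exhaustion is in the right spirit and is essentially how \cite{N-P-L} and \cite{Mauldin-Urbanski} proceed, though the residual-measure-zero step you describe is not the standard route; one normally shows $\dim_H C(\B)=\sup_n\dim_H C(\B_n)$ directly from monotonicity of the pressure/spectral radius, without isolating a residual set.
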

In all examples which we shall consider, $L_s$ is a bounded linear map of
$C_{\C}(U) \to C_{\C}(U)$ for $s = s_*$ and $r(L_{s_*}) = 1$.

Theorems~\ref{thm:thm3.1}--\ref{thm:thm3.4} reduce the problem of estimating
the Hausdorff dimension of the invariant set $C$ for $\B \subset I_1$ to the
problem of estimating the value of $s$ for which $r(L_s) =1$.  If $\B$ is
finite, we have to estimate $r(L_{s,R, \alpha})$ for $\alpha =0$.  If
$\B$ is infinite, Theorem~\ref{thm:thm3.3} implies that we need a lower bound
for $r(L_{s,R, \alpha})$ for $\alpha = \eta_{s,R}$ and an upper bound for
$r(L_{s,R, \alpha})$ for $\alpha = \delta_{s,R}$.


If $\B = I_1$, it was stated in \cite{Mauldin-Urbanski} that the Hausdorff
dimension of the associated invariant set $C$ is $\le 1.885$ and in \cite
{Priyadarshi}, it was shown that the Hausdorff dimension of the set $C$ is
$\ge 1.78$. We shall give much sharper estimates below.  We shall also give
estimates for the Hausdorff dimension of the associated invariant set of $\B
\subset I_1$ for some other choices of $\B$, e.g., 
\begin{align*}
&\B = I_2:= \{\be = m+ n\irm: m \in \N, -n \in \N\},
\\
&\B = I_3:= \{\be = m+ n\irm: m \in \{1,2\}, n \in \{0, \pm 1, \pm 2\}\}.
\end{align*}
This is a feasibility study, so we restrict attention to these examples,
but our approach applies to general sets $\B \subset I_1$; and in fact
invariant sets for many other {\it iterated function systems} can be
handled by similar methods.

\subsection{Numerical Method}
\label{subsec:method}
Let $N>0$ be an even integer, $h:= 1/N$, and let $D$, $U$, and $H$ be
as in Section~\ref{subsec:problem}. Define $D_{+}=\{(x,y) \in D: y \ge
0\}$.  We consider {\it mesh points} of the form $(jh,kh)$, where $j
\in \N \cup \{0\}$ and $k \in \Z$. Each mesh point $(x_j,y_k) = (jh,
kh)$ defines a closed {\it mesh square} $R_{jk}$ with vertices
$(x_j,y_k)$, $(x_{j+1},y_k)$, $(x_j,y_{k+1})$, and
$(x_{j+1},y_{k+1})$.  If $D_h$ (respectively, $D_{+,h}$) is a finite
union of mesh squares and $D_h \supset D$ (respectively $D_{+,h}
\supset D_+$), $D_h$ will be called a {\it mesh domain} for $D$
(respectively, a {\it mesh domain} for $D_+$). We could choose
$D_{+,h} = [0,1] \times [0,1/2]$, but that choice would add unknowns
we do not use. Thus we shall usually take $D_h$ (respectively,
$D_{+,h}$) to be the union of squares $R_{j,k}$ which have nonempty
intersection with the interior of $D$ (respectively, $D_{+})$. The
domain $D_+$ and a mesh domain $D_{+,h}$ are illustrated in
Figure~\ref{fg:f1}.

The mesh domains $D_h$ and $D_{+,h}$ correspond to sets $\bar U$ and $\bar H$
in Section~\ref{subsec:problem}.  If $D$ and $\B$ are symmetric under
conjugation or if $\Im(\be) \le -1$ for all $\be \in \B$, the observations in
Section~\ref{subsec:problem} show that we can restrict attention to $D_+$ and
$D_{+,h}$ instead of the full sets $D$ and $D_h$.  Indeed, this will be the
case for the invariant sets associated to $I_1$, $I_2$, and $I_3$.  We also
note that in the case $\B = I_3$, there is a smaller domain $C \subset D$,
symmetric under conjugation, such that $\theta_{\be}(C) \subset C \setminus
\{0\}$ for $\be \in \B$. Although we have not done so, we could have reduced
the size of the approximate problem by using a mesh domain $C_h$ for $C$.

\begin{figure}[htb]
\centerline{\includegraphics[width=14cm]{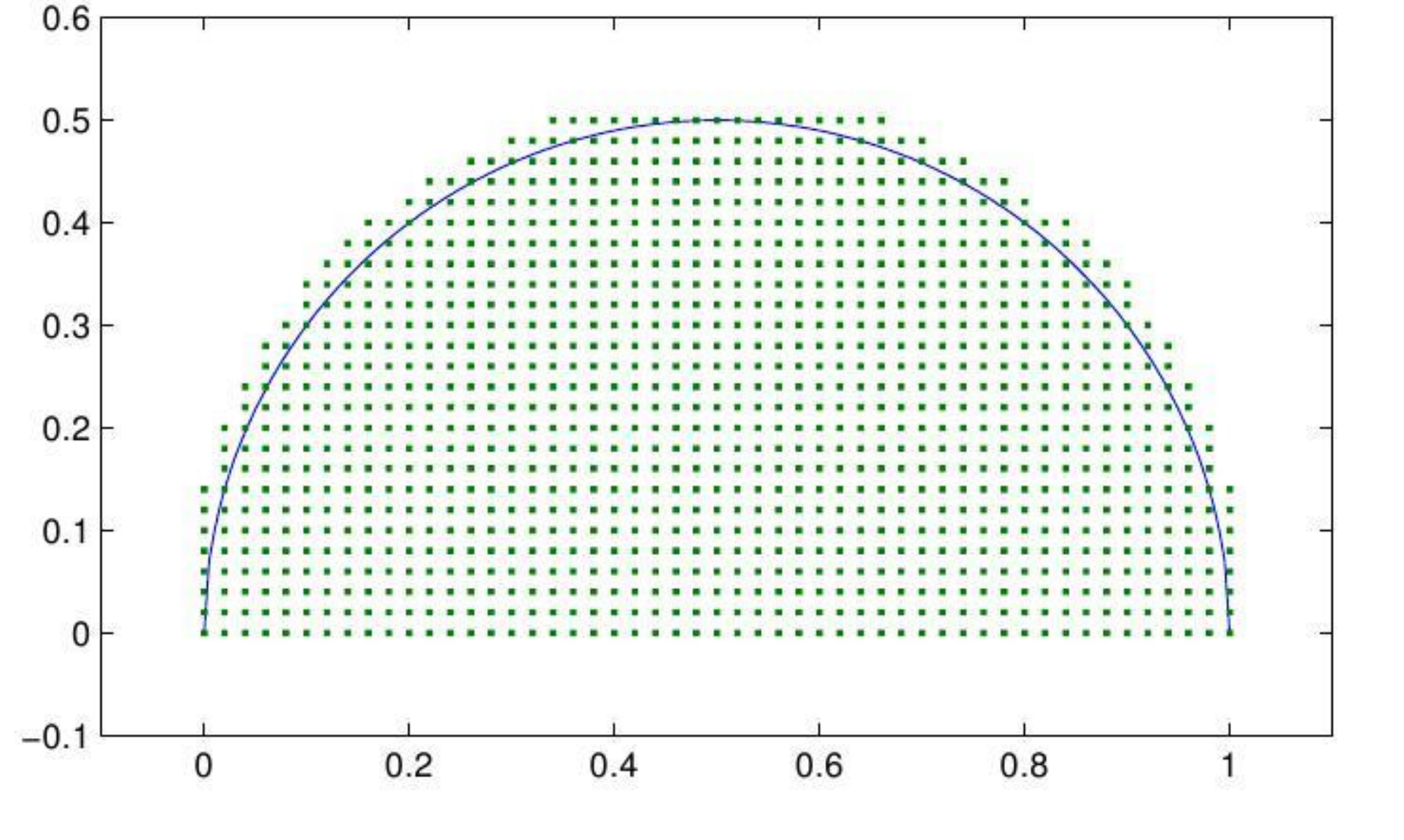}}
\caption[]{Domain $D_+$ and mesh domain $D_{+,h}$}.
\label{fg:f1}
\end{figure}

If $D_h$ is as above, we take $\bar U = D_h$ and we assume that $0 \le
x \le 1$ and $|y| <1$ for all $(x,y) \in \bar U$.  Given a set $\B
\subseteq I_1$ and $s >0$, we assume that $s > \tau(\B)$ (so $\sum_{b
  \in \B} (1/|b|^{2s}) < \infty$).  If $\B$ is finite, we assume that
$R \ge |\be|$ for all $\be \in \B$ and define $L_s: = L_{s,R, \alpha}$
with $\alpha =0$.  If $\B$ is infinite, we assume for the moment that
we have found $\eta_{s,R} \ge 0$ and $\delta_{s,R} >0$ satisfying
\eqref{specrel1} and \eqref{specrel2}. For $\alpha = \eta_{s,R}$, we
define $L_{s,R-} = L_{s,R, \alpha}$ and for $\alpha = \delta_{s,R}$,
we define $L_{s,R+} = L_{s,R, \alpha}$ (compare \eqref{Lsdefza}); we
recall that Theorem~\ref{thm:thm3.3} implies that
\begin{equation*}
r(L_{s,R-}) \le r (L_s) \le r(L_{s,R+}).
\end{equation*}

In all cases, we have an operator $L_{s,R, \alpha}$ where $\alpha \ge 0$
and $R >2$. Theorem~\ref{thm:thm3.1} implies that $L_{s,R, \alpha}$ has a
unique (to within scalar multiples) strictly positive eigenfunction $w_s$ on
$\bar U = D_h$ which has (assuming $\alpha >0$ or $\B_R \neq \emptyset$)
eigenvalue $r(L_{s,R, \alpha}) >0$.  The eigenfunction $w_s$ is
$C^{\infty}$ and satisfies \eqref{wrelation}--\eqref{Dyybound}. If $\B$ is
symmetric under conjugation, $w_s(\bar z) = w_s(z)$ for all $z \in D_h$.

We shall now describe how to find rigorous upper and lower bounds for
$r(L_{s,R, \alpha})$, where $\alpha \ge 0$ or $\B_R \neq \emptyset$.
After estimating $\eta_{s,R}$ and $\delta_{s,R}$, this will yield
rigorous upper and lower bounds for $r(L_s)$.  Our approach is to
approximate $w_s$ by a continuous, piecewise bilinear function, i.e.,
$w_s$ will be bilinear on each mesh square $R_{j,k}$ of the mesh
domain $D_h$. As noted in Section~\ref{subsec:problem}, we shall be
able to work on $D_{+,h}$ in our particular examples.


More precisely, for fixed $R$ and $\alpha$, our goal is to define nonnegative,
square matrices $A_s$ and $B_s$ such that
\begin{equation*}
r(A_s) \le r(L_s) \le r(B_s), \quad s > \tau(\B).
\end{equation*}
If $s_*$ denotes the unique value of $s$ such that $r(L_{s_*}) = \lambda_{s_*}
= 1$, then $s_*$ is the Hausdorff dimension of the invariant set associated
with $\B$.  If we can find a number $s_1$ such that $r(B_{s_1}) \le 1$, then
$r(L_{s_1}) \le r(B_{s_1}) \le 1$, and we can conclude that $s_* \le s_1$.
Analogously, if we can find a number $s_2$ such that $r(A_{s_2}) \ge 1$, then
$r(L_{s_2}) \ge r(A_{s_2}) \ge 1$, and we can conclude that $s_* \ge s_2$.  By
choosing the mesh size $h$ to be sufficiently small, we can make $s_1-s_2$
small, providing a good estimate for $s_*$.

Before describing how to construct the matrices $A_s$ and $B_s$, we need
to recall some standard results about bilinear interpolation.
On the mesh square
\begin{equation*}
R_{k,l} = \{(x,y): x_k \le x \le x_{k+1}, y_l \le y \le y_{l+1}\},
\end{equation*}
where $x_{k+1} - x_k = y_{l+1} - y_l =h$, the bilinear interpolant
$f^I(x,y)$ of a function $f(x,y)$ is given by:
\begin{multline*}
f^I(x,y) = \Big[\frac{x_{k+1} -x}{h}\Big]
 \Big[\frac{y_{l+1} -y}{h}\Big]f(x_k,y_l)
+  \Big[\frac{x- x_{k}}{h}\Big]
\Big[\frac{y_{l+1} -y}{h}\Big] f(x_{k+1},y_l)
\\
+  \Big[\frac{x_{k+1} -x}{h}\Big]
\Big[ \frac{y - y_{l}}{h}\Big] f(x_k,y_{l+1})
+  \Big[\frac{x- x_{k}}{h}\Big]
 \Big[\frac{y - y_{l}}{h} \Big] f(x_{k+1},y_{l+1}).
\end{multline*}
The error in bilinear interpolation satisfies for all $(x,y) \in R_{k,l}$
and some points $(a_k,b_l)$ and $(c_k,d_l) \in  R_{k,l}$,
\begin{multline*}
f^I(x,y) - f(x,y)
= 1/2)\Big[ (x_{k+1} -x)(x- x_{k}) (D_{xx}f)(a_k,b_l)
\\
+ (y_{l+1} -y)(y - y_{l}) (D_{yy} f)(c_k,d_l)\Big].
\end{multline*}

For $z = x+\irm y$, let $f(x,y) = w_s(\theta_{\be}(z))$. Further let $z_{k,l} =
x_k + \irm y_l$.  If $(\tilde x, \tilde y) = (\Re \theta_{\be}(z), \Im
\theta_{\be}(z)) \in R_{k,l}$, (which we will sometimes abbreviate by
$\theta_{\be}(z) \in R_{k,l}$), we get
\begin{multline*}
w_s^I(\theta_{\be}(z)) = 
\Big[\frac{x_{k+1} -\tilde x}{h}\Big]
 \Big[\frac{y_{l+1} -\tilde y}{h}\Big]w_s(z_{k,l})
+  \Big[\frac{\tilde x- x_{k}}{h}\Big]
\Big[\frac{y_{l+1} -\tilde y}{h}\Big] w_s(z_{k+1,l})
\\
+  \Big[\frac{x_{k+1} -\tilde x}{h}\Big]
\Big[ \frac{\tilde y - y_{l}}{h}\Big] w_s(z_{k,l+1})
+  \Big[\frac{\tilde x- x_{k}}{h}\Big]
 \Big[\frac{\tilde y - y_{l}}{h} \Big] w_s(z_{k+1,l+1}).
\end{multline*}
Defining
\begin{equation*}
\Psi_{\be}(z) = 1/(\bar z + \bar \be),
\end{equation*}
we have an analogous formula for $w_s^I(\Psi_{\be}(z))$, with
$(\tilde x, \tilde y) = (\Re \Psi_{\be}(z), \Im  \Psi_{\be}(z))$.

We next use inequalities \eqref{wrelation}--\eqref{Dyybound}
to obtain bounds on the interpolation error. 
By \eqref{Dxxbound} and \eqref{Dyybound}, we find
for $\theta_{\be}(z) = \tilde x + \irm \tilde y$, where $(\tilde x, \tilde
y) \in R_{k,l}$,
\begin{multline*}
-\Big[\frac{s}{8\gamma^2(s+1)} + \frac{s}{\gamma^2}\Big] 
\left([x_{k+1} - \tilde x][\tilde x -x_{k}]w_s(a_k,b_l) 
+ [y_{l+1} - \tilde y][\tilde y -y_{l}]w_s(c_k,d_l) \right) 
\\
\le  w_s^I(\theta_{\be}(z)) - w_s(\theta_{\be}(z)) 
\\
\le \frac{s(2s+1)}{\gamma^2}
\left([x_{k+1} - \tilde x][\tilde x -x_{k}]w_s(a_k,b_l)
+ [y_{l+1} - \tilde y][\tilde y -y_{l}]w_s(c_k,d_l)\right).
\end{multline*}
Applying \eqref{wrelation}, we then obtain
\begin{multline*}
- \frac{s}{\gamma^2}\Big[\frac{9+8s}{8(s+1)}\Big] 
\left([x_{k+1} - \tilde x][\tilde x -x_{k}] 
+ [y_{l+1} - \tilde y][\tilde y -y_{l}] \right) 
\exp\big(\frac{\sqrt{10} sh}{\gamma}\big) w_s^I(\theta_{\be}(z))
\\
\le  w_s^I(\theta_{\be}(z)) - w_s(\theta_{\be}(z)) 
\\
\le \frac{s(2s+1)}{\gamma^2}
\left([x_{k+1} - \tilde x][\tilde x -x_{k}]
+ [y_{l+1} - \tilde y][\tilde y -y_{l}]\right) 
\exp\big(\frac{\sqrt{10} sh}{\gamma}\big)
w_s^I(\theta_{\be}(z)).
\end{multline*}
since any point in $R_{k,l}$ is within $\sqrt{2}h$
of each of the four corners of the square $R_{k,l}$.  An analogous result
holds for $w_s(\Psi_{\be}(z))$.

Using this estimate, we have precise upper and lower bounds on the error
in the mesh square $R_{k,l}$ that only depend on the function values of
$w_s$ at the four corners of the square and the value of $\be$.  Letting
\begin{align*}
\err_{\be}^1(\theta_{\be}(z)) &=\Big([x_{k+1} - \tilde x][\tilde x -x_{k}]
+ [y_{l+1} - \tilde y][\tilde y -y_{l}]\Big)
\frac{s (2s+1)}{\gamma^2} \exp(\sqrt{10}sh/\gamma),
\\
\err_{\be}^2(\theta_{\be}(z)) &=\Big([x_{k+1} - \tilde x][\tilde x -x_{k}]
+ [y_{l+1} - \tilde y][\tilde y -y_{l}]\Big)
\frac{s}{\gamma^2}\Big[\frac{9+8s}{8+8s}\Big] 
\exp(\sqrt{10}sh/\gamma),
\end{align*}
(where again $\theta_{\be}(z) = \tilde x + \irm \tilde y$),
we have for each mesh point $z_{i,j} = x_i + \irm y_j$, with
$\theta_{\be}(z_{i,j}) \in R_{k,l}$,
\begin{equation*}
[1 - \err_{\be}^1(z_{i,j})] w_s^I(\theta_{\be}(z_{i,j})) 
\le w_s(\theta_{\be}(z_{i,j})) 
\le [1 + \err_{\be}^2(z_{i,j})]w_s^I(\theta_{\be}(z_{i,j})).
\end{equation*}
Again, the analogous result holds for $w_s(\Psi_{\be}(z))$.

To obtain the upper and lower matrices, we first note that for each
mesh point $z_{i,j}$,
\begin{multline*}
\alpha w_s(0) + \sum_{\be \in \B_R}\frac{1}{|z_{i,j}+b|^{2s}}
[1 - \err_{\be}^1(z_{i,j})] w_s^I(\theta_{\be}(z_{i,j})) 
\\
\le \sum_{\be \in \B_R} \frac{1}{|z_{i,j}+b|^{2s}}w_s(\theta_b(z_{i,j}) )
+ \alpha w_s(0) 
\\
\le \sum_{\be \in \B_R} \frac{1}{|z_{i,j}+b|^{2s}} 
[1 + \err_{\be}^2(z_{i,j})]w_s^I(\theta_{\be}(z_{i,j}))
+ \alpha w_s(0).
\end{multline*}
Motivated by the above inequality, we now define matrices
$A_s$ and $B_s$ which have nonnegative entries and satisfy the property that
$r(A_s) \le r(L_s) \le r(B_s)$. For clarity, we do this in several steps.
For $f$ a continuous, piecewise bilinear function defined on the mesh
domain $D_h$, we first define operators $\Ab_s$ and $\Bb_s$ (which also
depend on $\alpha$) by:
\begin{align}
\label{Arow}
(\Ab_s f)(z_{i,j}) &= \sum_{b \in \B_R}\frac{1}{|z_{i,j}+b|^{2s}}
[1 - \err_{\be}^1(z_{i,j})] f(\theta_{\be}(z_{i,j})) + \alpha f(0),
\\
\label{Brow}
(\Bb_s f)(z_{i,j}) &= \sum_{b \in \B_R}
\frac{1}{|z_{i,j}+b|^{2s}} 
[1 + \err_{\be}^2(z_{i,j})] f(\theta_{\be}(z_{i,j})) + \alpha f(0),
\end{align}
where $z_{i,j}$ is a mesh point in $D_h$. 
In the above, if $(\tilde x, \tilde y) = (\Re \theta_{\be}(z), \Im
\theta_{\be}(z)) \in R_{k,l}$, then, using bilinearity,
\begin{multline}
\label{bilinearity}
f(\theta_{\be}(z)) = 
\Big[\frac{x_{k+1} -\tilde x}{h}\Big]
 \Big[\frac{y_{l+1} -\tilde y}{h}\Big]f(z_{k,l})
+  \Big[\frac{\tilde x- x_{k}}{h}\Big]
\Big[\frac{y_{l+1} -\tilde y}{h}\Big]f(z_{k+1,l})
\\
+  \Big[\frac{x_{k+1} -\tilde x}{h}\Big]
\Big[ \frac{\tilde y - y_{l}}{h}\Big]f(z_{k,l+1})
+  \Big[\frac{\tilde x- x_{k}}{h}\Big]
 \Big[\frac{\tilde y - y_{l}}{h} \Big]f(z_{k+1,l+1}).
\end{multline}

Let $Q = \{z_{i,j} : z_{i,j} \text{ is a mesh point of } D_h\}$ and consider
the finite dimensional vector space $C_{\R}(Q)$. We can consider $f$ above as
an element of $C_{\R}(Q)$, where $f(\theta_{\be}(z))$ is defined by 
\eqref{bilinearity}. If we use \eqref{bilinearity} in \eqref{Arow}
and \eqref{Brow}, $\Ab_s$ and $\Bb_s$ define linear maps of 
$C_{\R}(Q)$ to $C_{\R}(Q)$. Note that
since $\err_{\be}^i = O(h^2)$ for $i=1,2$, $\Ab_s(S+) \subset S+$ and 
$\Bb_s(S+) \subset S+$ for $h$ sufficiently small, where $S+$ denotes the set
of nonnegative functions in $C_{\R}(Q)$. If, for fixed $\alpha \ge 0$, we take
$f = w_s$, the strictly positive eigenfunction of $L_{s,R, \alpha}$, our
construction insures that for all mesh points $z_{i,j} \in D_h$,
\begin{equation*}
(\Ab_s w_s)(z_{i,j}) \le (L_{s,R, \alpha} w_s)(z_{i,j})
= r(L_{s,R, \alpha}) w_s(z_{i,j}) \le (\Bb_s w_s)(z_{i,j}).
\end{equation*}
Lemma~\ref{lem:nonneg} now implies that
\begin{equation}
\label{nnimplies}
r(\Ab_s) \le r(L_{s,R, \alpha}) \le r(\Bb_s).
\end{equation}
If $\B$ is finite, so $\alpha =0$ and $L_{s,R} = L_s$, \eqref{nnimplies}
gives an estimate for $r(L_s)$ in terms of the spectral radii of finite
dimensional linear maps $\Ab_s$ and $\Bb_s$.  If $\B$ is infinite and $R > 0$
has been chosen and $\eta_{s,R}$ and $\delta_{s,R}$ have been estimated as in
Theorems~\ref{thm:thm5.12} and \ref{thm:thm5.13}, we take $\alpha = \eta_{s,R}$
in \eqref{Arow} and define $\Ab_s$ as in \eqref{Arow} and we obtain, using
Theorem~\ref{thm:thm3.3},
\begin {equation}
\label{lowbound}
r(\Ab_s) \le r(L_{s,R-}) \le r(L_s).
\end{equation}
Taking $\alpha = \delta_{s,R}$
in \eqref{Brow}, we define $\Bb_s$ as in \eqref{Brow} to obtain
\begin {equation}
\label{upbound}
r(L_s) \le r(L_{s,R+}) \le r(\Bb_s).
\end{equation}

As a practical matter, it remains to describe the linear maps $\Ab_s$ and
$\Bb_s$ as matrices.  For simplicity, we totally order the elements of $Q$ by
the dictionary ordering, i.e., $z_{i,j} < z_{p,q}$ if and only if $i < p$ or
if $i=p$ and $j < q$. Then we can identify $f \in C_{\R}(Q)$ with a column
vector $(f_1, \ldots, f_k, \ldots, f_n)^T$, where $f(z_{i,j}):= f_k$ if
$z_{i,j}$ is the $k$th element when the mesh points in $D_h$ are ordered as
above and $n$ is the total number of mesh points in $D_h$, Since
$f(\theta_b(z))$ is a linear combination of four components of $f$, the mesh
point $z_{i,j}$ will produce row $k$ of the matrix $A_s$ (and similarly for
$B_s$).  A more detailed description of this procedure can be found in
\cite{hdcomp1} for a simpler one dimensional problem. 
Since $A_s$ and $B_s$ are just
representations of the linear maps $\Ab_s$ and $\Bb_s$, we can replace
$r(\Ab_s)$ by $r(A_s)$ in \eqref{lowbound} and $r(\Bb_s)$ by $r(B_s)$ in
\eqref{upbound}.  Thus, we can restate \eqref{lowbound} and 
\eqref{upbound} in terms of the spectral radii of the matrices
$A_s$ and $B_s$, which better conforms to the description in
Section~\ref{sec:intro}:
\begin {equation*}
r(A_s) \le r(L_s) \le r(B_s).
\end{equation*}


As described in Section~\ref{sec:intro}, if $s_*$ denotes the unique value of
$s$ such that $r(L_{s_*}) = \lambda_{s_*} = 1$, then $s_*$ is the Hausdorff
dimension of the invariant set under study.  Hence, if we can find a number
$s_1$ such that $r(B_{s_1}) \le 1$, then $r(L_{s_1}) \le r(B_{s_1}) \le 1$,
and we can conclude that $s_* \le s_1$.  Analogously, if we can find a number
$s_2$ such that $r(A_{s_2}) \ge 1$, then $r(L_{s_2}) \ge r(A_{s_2}) \ge 1$,
and we can conclude that $s_* \ge s_2$.  By choosing the mesh sufficiently
fine and both $r(B_{s_1})$ and $r(A_{s_2})$ very close to one, we can make
$s_1-s_2$ small, providing a good estimate for $s_*$. As noted in
Section~\ref{sec:intro}, since the map $s \mapsto r(L_{s,R,\alpha})$ is
log convex, we can find the desired values of $s_1$ and $s_2$ by using a
slight modification of the secant method applied to finding zeros of the
functions $\log[r(A_{s_2})]$ and $\log[r(B_{s_2})]$.  We also note that since
the matrices $A_s$ and $B_s$ will have a single dominant eigenvalue, (see
Section~\ref{sec:compute-sr} of this paper and Section 7 of \cite{hdcomp1}),
the spectral radius is easily computed by a variant of the power method (in
fact, our computer codes simply call the {\it Matlab} routine {\tt eigs}).
Indeed, the same program also gives high order approximations to the strictly
positive eigenvectors associated to $r(A_s)$ and $r(B_s)$.

By our remarks above, it only remains to use our estimates for $\eta_{s,R}$
and $\delta_{s,R}$ in \eqref{specrel1} and \eqref{specrel2} when $\B$ is
infinite, since then we will have the matrices $A_s$ and $B_s$.

In Table~\ref{tb:t4}, we present the computation of upper and
  lower bounds for the Hausdorff dimension of the invariant sets associated to
$\B = I_1, I_2$, and $I_3$.  In the table, we study the effects of decreasing
the mesh size $h$ and increasing the value of $R$, which corresponds
to only including terms in the sum for which $|b| \le R$.  Each row in the
table gives upper and lower bounds, and for $R$ fixed, one can see that
the lower bounds are increasing and the upper bounds decreasing as
$h$ is decreased.  Similarly, taking a larger value of $R$ improves the
bounds for the same mesh size.  Except for possible round off error in
these calculations, which we do not expect to affect the results for
the number of decimal places shown, our theorems prove that these are
in fact upper and lower bounds for the actual Hausdorff dimension.

\begin{table}[!ht]
\caption{Computation of Hausdorff dimension $s$  
for several values of $h$ and $R$ (rounded to 5 decimal places).}
\label{tb:t4}
\begin{center}
\begin{tabular}{|c|c|c|c|c|}
\hline
Set & $h$ & $R$    &   lower $s$  & upper $s$ \\
\hline \hline
$I_1$ &  $.02$ & $100$ &  1.85516   & 1.85608\\
$I_1$ &  $.01$ & $100$ &  1.85563   & 1.85594 \\
$I_1$ &  $.005$ & $100$ & 1.85574    & 1.85590 \\
\hline
$I_1$ &  $.02$ & $200$ &  1.85521   & 1.85604 \\
$I_1$ &  $.01$ & $200$ &  1.85568   & 1.85589 \\
\hline
$I_1$ &  $.02$ & $300$ & 1.85522   & 1.85603 \\
\hline \hline
$I_2$ &  $.02$ & $100$ &  1.48883    &   1.49010 \\
$I_2$ &  $.01$ & $100$ &  1.48904   & 1.49003 \\
$I_2$ &  $.005$ & $100$ & 1.48909  &  1.49002 \\
\hline
$I_2$ &  $.02$ & $200$ & 1.48925    & 1.48985  \\
$I_2$ &  $.01$ & $200$ &  1.48946   &  1.48978 \\
\hline
$I_2$ &  $.02$ & $300$ & 1.48933    &  1.48981 \\
\hline \hline
$I_3$ &  $.02$  &  & 1.53706 &  1.53790  \\
$I_3$ &  $.01$  &  & 1.53754 &  1.53774  \\
$I_3$ &  $.005$ &  & 1.53765 &  1.53770  \\
\hline
\end{tabular}
\end{center}
\end{table}


\begin{remark}
\label{rem:interval}
It is important to note that, given $s_1$ and $s_2$, $B_{s_1}$ and $A_{s_2}$
are, modulo roundoff errors in computation, known exactly.  Furthermore, our
computer program furnishes (purported) strictly positive eigenvectors
$w_{s_1}$ for $B_{s_1}$  and $u_{s_2}$ for $A_{s_2}$, with respective
eigenvalues $r(B_{s_1}) < 1$ and $r(A_{s_2}) > 1$.  However, we do not need
to know whether $w_{s_1}$ and $u_{s_2}$  are actually eigenvectors. It
suffices to verify that
\begin{equation}
\label{verify}
B_{s_1} w_{s_1} \le w_{s_1} \qquad \text{and} \qquad 
A_{s_2} u_{s_2} \ge u_{s_2},
\end{equation}
since then Lemma~\ref{lem:nonneg} implies that $r(B_{s_1}) \le 1$ and
$r(A_{s_2}) \ge 1$, and we obtain that $s_2 \le s_* \le s_1$.  The vectors
$u_{s_2}$ and $w_{s_1}$ are given to us exactly by the program.  We have
verified \eqref{verify} to high accuracy, but we have not used interval
arithmetic. If we had used interval arithmetic to calculate $B_{s_1}$,
$A_{s_2}$, and to verify \eqref{verify}, the estimates in Table~\ref{tb:t4}
would be completely rigorous.  It is in that sense that we list the following
result as a theorem.
\end{remark}
\begin{thm}
\label{thm:thm-rigorous}
The Hausdorff dimensions of the invariant sets associated to $\B = I_1$, $I_2$,
and $I_3$ satisfy the bounds
\begin{gather*}
I_1: \quad 1.85574 \le s \le 1.85589, \qquad
I_2: \quad 1.48946 \le s \le 1.48978,
\\
I_3: \quad 1.53765 \le s \le 1.53770.
\end{gather*}
\end{thm}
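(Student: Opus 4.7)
The plan is to reduce the theorem to a finite, checkable computation on a fixed grid, and then to certify that computation rigorously using Lemma~\ref{lem:nonneg}. For each of $\B = I_1, I_2, I_3$, fix a mesh size $h$ and a cutoff $R$ of the size indicated in Table~\ref{tb:t4} (for $\B = I_3$ no cutoff is needed since $\B$ is finite), and fix the two candidate values $s_1$ (the upper bound to be proved) and $s_2$ (the lower bound). First I would assemble the matrices $A_{s_2}$ and $B_{s_1}$ exactly as in \eqref{Arow}--\eqref{Brow}: for $\B = I_3$ set $\alpha = 0$, and for $\B \in \{I_1, I_2\}$ take $\alpha = \eta_{s_2,R}$ when building $A_{s_2}$ and $\alpha = \delta_{s_1,R}$ when building $B_{s_1}$, using the explicit estimates for $\eta_{s,R}$ and $\delta_{s,R}$ furnished by Theorems~\ref{thm:thm5.12} and \ref{thm:thm5.13} (note $s > 1$ in all three cases, so these estimates apply).

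Next I would run the program described in Section~\ref{subsec:method}, which calls a power-method variant on $A_{s_2}$ and $B_{s_1}$, to obtain strictly positive vectors $u_{s_2}, w_{s_1} \in C_\R(Q)$ that are (numerically) Perron eigenvectors. These vectors are regarded as data; whether or not they are genuine eigenvectors is irrelevant. The substantive step is to verify the two componentwise inequalities
\begin{equation*}
A_{s_2} u_{s_2} \ge u_{s_2}, \qquad B_{s_1} w_{s_1} \le w_{s_1}.
\end{equation*}
Once \eqref{verify} is established, Lemma~\ref{lem:nonneg} applied in $C_\R(Q)$ yields $r(A_{s_2}) \ge 1$ and $r(B_{s_1}) \le 1$. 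Combined with the sandwich \eqref{lowbound}--\eqref{upbound} and Theorem~\ref{thm:thm3.4}, this gives $r(L_{s_2}) \ge 1 \ge r(L_{s_1})$, and by the strict monotonicity of $s \mapsto r(L_s)$ from Theorem~\ref{thm:thm3.4} we conclude $s_2 \le s_* \le s_1$, which is exactly the claim for each of the three invariant sets.

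The principal obstacle is making the verification of \eqref{verify} genuinely rigorous rather than merely numerical, since the matrix entries of $A_{s_2}$ and $B_{s_1}$ depend transcendentally on $s$ through $|z_{i,j} + b|^{-2s}$, through the estimates $\eta_{s,R}$, $\delta_{s,R}$, and through the error factors $\err_\be^1, \err_\be^2$ built from the exponential in \eqref{wrelation}. The remedy, as indicated in Remark~\ref{rem:interval}, is to recompute the matrix--vector products $A_{s_2} u_{s_2}$ and $B_{s_1} w_{s_1}$ in directed-rounding interval arithmetic at sufficient precision: each entry is evaluated as an interval enclosure, the sum over $\be \in \B_R$ is accumulated with rounding control, the constants $\eta_{s_2,R}$ and $\delta_{s_1,R}$ are replaced by verified outer bounds, and one confirms that the upper endpoint of $(B_{s_1} w_{s_1})_i$ does not exceed $(w_{s_1})_i$ and that the lower endpoint of $(A_{s_2} u_{s_2})_i$ is at least $(u_{s_2})_i$ for every index $i$. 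Since, as reported in Table~\ref{tb:t4}, both spectral radii are well separated from $1$ at the chosen $h$, there is ample margin for the unit roundoff to be absorbed, so the verification will succeed; the non-routine part is simply organizing this certified arithmetic over a grid with on the order of $10^4$--$10^5$ mesh points and a sum with up to $R^2 \sim 10^5$ terms.
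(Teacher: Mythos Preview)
Your proposal is correct and follows essentially the same route as the paper: the argument is precisely the one outlined in Remark~\ref{rem:interval} and Section~\ref{subsec:method}, namely assemble $A_{s_2}$ and $B_{s_1}$ with the appropriate $\alpha$ from Theorems~\ref{thm:thm5.12}--\ref{thm:thm5.13}, produce candidate positive vectors, verify the componentwise inequalities \eqref{verify}, and invoke Lemma~\ref{lem:nonneg} together with \eqref{lowbound}--\eqref{upbound} and Theorem~\ref{thm:thm3.4}. Your discussion of the interval-arithmetic certification also matches the paper's caveat that the result is a theorem only modulo that step.
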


\subsection{Higher order approximation}
\label{higher-order}

Although the theory developed in this paper does not apply to higher order
piecewise polynomial approximation, since one cannot guarantee that the
approximate matrices have nonnegative entries, we also report in
Table~\ref{tb:t5} and Table~\ref{tb:t6} the results of higher order piecewise
polynomial approximation to demonstrate the promise of this
approach.   In this case, we only provide the results for the approximate
matrix, which does not contain any corrections for the interpolation error.

Since we did not have an exact solution for the problem corresponding to the
set $I_3$, we cannot compare the actual errors. However, assuming the last
entry in Table~\ref{tb:t5} gives the most accurate approximation, we see that
the third entry using piecewise cubics is accurate to 10 decimal places, which
is a significant improvement over the last entry for linear approximation,
which only produces 5 correct digits after the decimal point.  This is
consistent with the theory of approximation of smooth functions by piecewise
polynomials, which shows that the convergence rate grows as the degree of the
polynomials is increased.  In the computations shown using higher order
piecewise polynomials, to get a fair comparison, we have adjusted the mesh
sizes so that the results for different degree piecewise polynomials will have
approximately the same number of degrees of freedom (DOF).

\begin{table}[!ht]
\caption{Computation of Hausdorff dimension $s$ of the set $I_3$
using higher order piecewise polynomials.}
\label{tb:t5}
\begin{center}
\begin{tabular}{|c|c|c|c|c|c|c|}
\hline
degree & h  & \# DOF &  $s$  \\
\hline \hline 
1 & .02 & 1098 & 1.537729111247678  \\
1 & .01 & 4165 & 1.537694920731214  \\
1 & .005 & 16201 &  1.537686565250360 \\
\hline
2 & 0.041667 & 1041 & 1.537683708302400    \\
2 & 0.020833 & 3913 &  1.537683729607203 \\
2 &  0.010417 & 15089 &  1.537683732415111\\
\hline
3 & 0.0625 & 1081 & 1.537683753797206  \\
3 & 0.03125 & 3997 &   1.537683734167568 \\
3 & 0.015625 & 15283 &  1.537683732983929 \\
3 & 0.0078125 & 59545 &   1.537683732912027 \\
\hline 
\end{tabular}
\end{center}
\end{table}

In a future paper we hope to prove that rigorous upper and lower bounds for
the Hausdorff dimension can also be obtained when higher order piecewise
polynomial approximations are used.

\subsection{A special example with a known solution}
\label{sec:special}
To further test the algorithm, especially using higher order
piecewise polynomials, we constructed a special example where
the exact solution is known.  More specifically, we considered the operator
\begin{equation*}
(L_s(f))(z) = \sum_{b \in \B} g_b^s(z) f(\theta_b(z)),
\end{equation*}
where $\B = \{1 \pm \irm, 2 \pm \irm, 3 \pm \irm\}$ and
\begin{equation*}
g_b(z) = \frac{1}{6}\Big|\frac{z+b+1}{z+b}\Big|^2 \Big|\frac{1}{z+1}\Big|^2.
\end{equation*}
This example is constructed so that $f(z) = |1/(z+1)|^2$ is an eigenfunction
of $L_1$ with eigenvalue $\lambda = 1$ for $s=1$.  In Table~\ref{tb:t6}, we
present the results of approximations using different values of $h$ and
different degree piecewise polynomials.

\begin{table}[!ht]
  \caption{Approximation, using higher order piecewise polynomials,
of the number $s=1$ for which $r(L_s) =1$ for the special example.}
\label{tb:t6}
\begin{center}
\begin{tabular}{|c|c|c|c|c|c|c|}
\hline
degree & h  & \# DOF &  $s$  \\
\hline \hline 
1 & .02 & 1098 & 1.000034749616189  \\
1 & .01 & 4165 & 1.000010815423902  \\
1 & .005 & 16201 &   1.000002596942892               \\
\hline
2 & .02 & 4239 &  1.000000016815596 \\
2 & .01 & 16357 & 0.999999997912829      \\
\hline
3 & .02 & 9424 &  1.000000000610834  \\
\hline
4 & .04167 & 4017 & 0.999999999999715 \\
4 & .02 & 16653 & 0.999999999999925   \\
\hline 
\end{tabular}
\end{center}
\end{table}

\section{Existence of $C^m$ positive eigenfunctions}
\label{sec:exist}
In this section we shall describe some results concerning existence of
$C^m$ positive eigenfunctions for a class of positive (in the sense of
order-preserving) linear operators.  We shall later indicate how one
can often obtain explicit bounds on partial derivatives of the
positive eigenfunctions.  As noted above, such estimates play a crucial
role in our numerical method and therefore in obtaining rigorous
estimates of Hausdorff dimension for invariant sets associated with
iterated function systems.

The starting point of our analysis is Theorem 5.5 in \cite{E}, which we
now describe for a simple case. If $H$ is a bounded open subset of
$\R^n$ and $m$ is a positive integer, $C^m_{\C}(\bar H)$ will denote the
set of complex-valued $C^m$ maps $f:H \to \C$ such that all partial
derivatives $D^{\alpha} f$ with $|\alpha| \le m$ extend continuously
to $\bar H$. (Here $\alpha = (\alpha_1, \ldots, \alpha_n)$ is a
multi-index with $\alpha_j \ge 0$ for all $j$, $D_j
= \partial/\partial x_j$ for $1 \le j \le n$ and $D^{\alpha} =
D_1^{\alpha_1} \cdots D_n^{\alpha_n}$), $C^m_{\C}(\bar H)$ is a complex Banach
space with $\|f\| = \sup\{|D^{\alpha} f(x)|: x \in H, |\alpha| \le
m\}$. Analogously, $C^m_{\R}(\bar H)$ denotes the corresponding real Banach
space of real-valued $C^m$ maps $f: H \to \R$.

We say that $H$ is {\it mildly regular} if there exist $\eta >0$ and
$M \ge 1$ such that whenever $x,y \in H$ and $\|x-y\| < \eta$, there exists
a Lipschitz map $\psi:[0,1] \to H$ with $\psi(0) = x$, $\psi(1) = y$
and
\begin{equation}
\label{1.1}
\int_0^1 \|\psi^{\prime}(t)\| \, dt \le M \|x-y\|.
\end{equation}
(Here $\|\cdot\|$ denotes any fixed norm on $\R^n$. If the norm is changed,
\eqref{1.1} remains valid, but with a different constant $M$.)

Let $\B$ denote a finite index set with $|\B| = p$.  For $\be \in \B$, we
assume
\begin{align*}
&\text{(H4.1)} \ \,
g_{\be} \in C^m_{\R}(\bar H) \text{ for all } \be \in \B \text{ and }
g_{\be}(x) > 0 \text{ for all } x \in \bar H \text{ and all } \be \in \B.
\\
&\text{(H4.2)} \ \,
\theta_{\be}:H \to H \text{ is a } C^m \text{ map for all } \be \in \B,
\text{ i.e., if } \theta_{\be}(x) = (\theta_{\be_1}(x),
\ldots \theta_{\be_n}(x)),
\\
&\ \, \qquad \qquad \text{ then } \theta_{\be_k} \in C^m_{\R}(\bar H)
\text{ for all } \be \in \B \text{ and for } 1 \le k \le n.
\end{align*}
In (H4.1) and (H4.2), we always assume that $m \ge 1$.

We define a bounded, complex linear map $\Lambda: C^m_{\C}(\bar H) \to 
C^m_{\C}(\bar H)$ by
\begin{equation}
\label{1.2}
(\Lambda(f))(x) = \sum_{\be \in B} g_{\be}(x) f(\theta_{\be}(x)).
\end{equation}
Equation \eqref{1.2} also defines a bounded real linear map of 
$C^m_{\R}(\bar H)$ to itself which we shall also denote by $\Lambda$.

For integers $\mu \ge 1$, we define $\B_{\mu} := \{\w = (j_1, \ldots j_{\mu})
: j_k \in \B \text{ for } 1 \le k \ \le \mu\}$. For
$\w = (j_1, \ldots j_{\mu}) \in \B_{\mu}$, we define $\w_{\mu} = \w$,
$\w_{\mu -1} = (j_1, \ldots j_{\mu-1})$,
$\w_{\mu -2} = (j_1, \ldots j_{\mu-2})$, $\cdots$, $\w_1 = j_1$.  We define
\begin{equation*}
\theta_{\w_{\mu-k}}(x) = (\theta_{j_{\mu-k}} \circ \theta_{j_{\mu-k-1}} \circ
\cdots \circ \theta_{j_{1}})(x),
\end{equation*}
so
\begin{equation*}
\theta_{\w}(x):= \theta_{\w_{\mu}}(x) = 
(\theta_{j_{\mu}} \circ \theta_{j_{\mu-1}} \circ
\cdots \circ \theta_{j_{1}})(x).
\end{equation*}

For $\w \in \B_{\mu}$, we define $g_{\w}(x)$ inductively by 
$g_{\w}(x) = g_{j_1}(x)$
if $\w = (j_1) \in \B:=\B_1$,
$g_{\w}(x) = g_{j_2}(\theta_{j_1}(x)) g_{j_1}(x)$ if $\w = (j_1,j_2) \in \B_2$
and, for $\w = (j_1,j_2, \ldots j_{\mu}) \in \B_{\mu}$,
\begin{equation*}
g_{\w}(x) = g_{j_\mu}(\theta_{\w_{\mu-1}}(x)) g_{\w_{\mu -1}}(x).
\end{equation*}

If is not hard to show (see \cite{A}, \cite{Bourgain-Kontorovich}, \cite{E})
that
\begin{equation}
\label{1.6}
(\Lambda^{\mu}(f))(x) = \sum_{\w \in \B_{\mu}} g_{\w}(x) f(\theta_\w(x)).
\end{equation}

If $\Lambda$ and $m$ are as above, we shall let $\sigma(\Lambda) \subset \C$
denote the spectrum of $\Lambda$.  If all the functions $g_j$ and $\theta_j$
are $C^N$, then we can consider $\Lambda$ as a bounded linear operator
$\Lambda_m: C^m_{\C}(\bar H) \to C^m_{\C}(\bar H)$ for $1 \le m \le N$, but
one should note that in general $\sigma(\Lambda_m)$ will depend on $m$.

To obtain a useful theory for $\Lambda$, we need a further crucial assumption.
For a given norm $\|\cdot \|$ on $\R^n$, we assume

(H4.3)  There exists a positive integer $\mu$ and a constant $\kappa <1$ such
that for all $\w \in \B_{\mu}$ and all $x,y \in H$,
\begin{equation*}
\|\theta_\w(x) - \theta_\w(y)\| \le \kappa \|x-y\|.
\end{equation*} 

If we define $c = \kappa^{1/\mu} <1$, it follows from (H4.3) that there
exists a constant $M$ such that for all $\w \in B_{\nu}$ and all $\nu \ge 1$,
\begin{equation}
\label{1.8}
\|\theta_\w(x) - \theta_\w(y)\| \le M c^{\nu} \|x-y\| \quad \forall x,y \in H.
\end{equation}
If the norm $\|\cdot \|$ in \eqref{1.8} is replaced by a different norm
$|\cdot |$, \eqref{1.8} remains valid, although with a different constant $M$.
This in turn implies that (H4.3) will also be valid with the same constant
$\kappa$, with $|\cdot|$ replacing $\|\cdot\|$ and with a possibly different
integer $\mu$.

The following theorem is a special case of Theorem 5.5 in \cite{E}.

\begin{thm}
\label{thm:1.1} 
Let $H$ be a bounded open subset of $\R^n$ and assume that $H$ is mildly
regular. Let $X = C^m_{\C}(\bar H)$ and assume that (H4.1), (H4.2), and (H4.3)
are satisfied (where $m \ge 1$ in (H4.1) and (H4.2)) and that $\Lambda:X \to
X$ is given by \eqref{1.2}.  If $Y= C_{\C}(\bar H)$, the Banach space of
complex-valued continuous functions $f: \bar H \to \C$ and $L:Y \to Y$ is
defined by \eqref{1.2}, then $r(L) = r(\Lambda) >0$, where $r(L)$ denotes the
spectral radius of $L$ and $r(\Lambda)$ denotes the spectral radius of
$\Lambda$.  If $\rho(\Lambda)$ denotes the essential spectral radius of
$\Lambda$ (see \cite{B},\cite{A},\cite{N-P-L}, and \cite{L}), then
$\rho(\Lambda) \le c^m r(\Lambda)$ where $c= \kappa^{1/\mu}$ is as in
\eqref{1.8}.  There exists $v \in X$ such that $v(x) >0$ for all $x \in \bar
H$ and
\begin{equation*}
\Lambda(v) = r v, \qquad r = r(\Lambda).
\end{equation*}
There exists $r_1 < r$ such that if $\xi \in \sigma(\Lambda)
\setminus\{r\}$,
then $|\xi| \le r_1$; and $r = r(\Lambda)$ is an isolated point of
$\sigma(\Lambda)$ and an eigenvalue of algebraic multiplicity 1. If $u \in X$
and $u(x) >0 \, \forall x \in \bar H$, there exists a real number $s_u >0$ such
that
\begin{equation}
\label{1.10}
\lim_{k \rightarrow \infty}\left(\frac{1}{r} \Lambda\right)^k (u) = s_u v,
\end{equation}
where the convergence in \eqref{1.10} is in the $C^m$ topology on $X$.
\end{thm}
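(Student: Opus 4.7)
The plan is to establish the theorem in four stages: (i) positivity of the spectral radius, (ii) an essential spectral radius bound via a Nussbaum--Kuratowski argument, (iii) a Krein--Rutman step producing a strictly positive $C^m$ eigenfunction, and (iv) strong-positivity arguments giving algebraic simplicity, the spectral gap, and the convergence \eqref{1.10}. For (i), since each $g_\be$ is bounded below by a positive constant on $\bar H$ and $\mathbf{1}$ lies in $\interior(K)$ for $K = \{f \in C_{\R}(\bar H) : f \ge 0\}$, Lemma~\ref{lem:nonneg} gives $r(L) \ge \min_{\bar H}\sum_\be g_\be > 0$. Because $X = C^m_{\C}(\bar H)$ embeds continuously in $Y = C_{\C}(\bar H)$ and $\Lambda$ is the restriction of $L$ to $X$, we automatically have $r(\Lambda) \ge r(L)$ once we produce a positive $C^m$ eigenfunction for $r(L)$ (or vice versa), and the two spectral radii will coincide.

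The main technical step is the bound $\rho(\Lambda) \le c^m r(\Lambda)$. The idea is to differentiate \eqref{1.6}: applying $D^\alpha$ with $|\alpha| = m$ to $\Lambda^\nu f$ and expanding by Fa\`a di Bruno's formula, one sees that the terms involving top-order derivatives $D^\beta f$ with $|\beta| = m$ are each multiplied by a product of $m$ first-order partials of the components of $\theta_\w$, and by \eqref{1.8} such products are bounded by $(Mc^\nu)^m$. All remaining terms involve only derivatives of $f$ of order $< m$, acted on by smooth $\bar H$-valued kernels, and hence define a compact operator on $X$ via Arzel\`a--Ascoli. Writing $\Lambda^\nu = R_\nu + K_\nu$ with $K_\nu$ compact and $\|R_\nu f\|_X \le C (c^m)^\nu \|L^\nu f\|_Y$, the Kuratowski measure of non-compactness satisfies $\alpha(\Lambda^\nu) \le C (c^m)^\nu \|L^\nu\|$; taking $\nu$th roots in the Nussbaum formula $\rho(\Lambda) = \lim_\nu \alpha(\Lambda^\nu)^{1/\nu}$ yields $\rho(\Lambda) \le c^m r(L)$. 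This bookkeeping of chain-rule terms, and in particular identifying exactly which partial derivatives of $\theta_\w$ appear at top order, is the main obstacle; everything else is then fairly standard.

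For (iii), since $\rho(\Lambda) < r(L) \le r(\Lambda)$ (once established), $r(\Lambda)$ lies in the discrete spectrum, and so a Krein--Rutman theorem for positive operators with reproducing cone (as in \cite{Bonsall}, \cite{Krein-Rutman}, \cite{Mallet-Paret-Nussbaum}) applied to $K_X = \{f \in X : f \ge 0 \text{ on } \bar H\}$ gives an eigenvector $v \in K_X \setminus \{0\}$ with $\Lambda v = r(\Lambda) v$. To upgrade to $v(x) > 0$ for all $x \in \bar H$, iterate: $\Lambda^\mu v = r(\Lambda)^\mu v$ and \eqref{1.6} expresses this as $\sum_{\w \in \B_\mu} g_\w(x) v(\theta_\w(x))$; since each $g_\w$ is strictly positive and the collection $\{\theta_\w(\bar H)\}_{\w \in \B_\mu}$ has the union covering a neighborhood of any point at which $v$ is positive (arguing with the connectedness implicit in mild regularity and iterating enough times), $v > 0$ throughout $\bar H$. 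Comparing $v$ with any strictly positive eigenfunction of $L$ at the same eigenvalue via Lemma~\ref{lem:nonneg} yields the equality $r(L) = r(\Lambda)$.

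For (iv), a standard convex-combination argument gives the peripheral spectrum. If $\Lambda w = \xi w$ with $|\xi| = r$, then using $v \in \interior(K_X)$ one can sandwich $-tv \le \Re(e^{i\varphi} w) \le tv$ for a minimal $t > 0$ and deduce (via the positivity of $\Lambda/r$ and the minimality of $t$) that $w$ is a scalar multiple of $v$ and $\xi = r$; hence $\sup\{|\xi| : \xi \in \sigma(\Lambda) \setminus \{r\}\} < r$. Algebraic simplicity follows because any generalized eigenvector $h$ solving $(\Lambda - r) h = c v$ with $c \ne 0$ would force $\|(\Lambda/r)^k h\| \to \infty$, contradicting the spectral decomposition implied by the bound on $\rho(\Lambda)$. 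Writing $\Lambda = r P + N$ where $P$ is the rank-one spectral projection onto $\mathrm{span}(v)$ and $N$ has spectral radius $r_1 < r$, we get $(\Lambda/r)^k u = P u + (N/r)^k u \to P u$ in $X$; since $P$ is positive and $u \in \interior(K_X)$, $P u = s_u v$ with $s_u > 0$, yielding \eqref{1.10}.
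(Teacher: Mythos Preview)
The paper does not prove Theorem~\ref{thm:1.1}; it cites the result as a special case of Theorem~5.5 in \cite{E}. Your four-stage outline (positivity of $r(L)$, essential spectral radius bound via a compact-plus-small decomposition of $\Lambda^\nu$, Krein--Rutman, then Perron--Frobenius simplicity and gap) is the standard architecture and is essentially what \cite{E} carries out, so at the level of strategy you are on the right track.

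However, your step (iii) has a genuine gap in the strict positivity argument. You assert that ``the collection $\{\theta_\w(\bar H)\}_{\w\in\B_\mu}$ has the union covering a neighborhood of any point at which $v$ is positive,'' invoking connectedness; but this is backward and in any case not what the eigen-equation gives you. What follows from $v(y)=0$ is that $v(\theta_\w(y))=0$ for every $\w$, so $v$ vanishes on the forward orbit of $y$ and hence on the invariant set $C$; turning this into $v\equiv 0$ requires controlling $\sum_{\w\in\B_{k\mu}} g_\w(z)$ against $r^{k\mu}$ uniformly, and mild regularity of $H$ alone does not deliver that. In \cite{E} and related work this step is handled not by upgrading a boundary eigenvector of $K_X$, but by producing the eigenfunction directly inside a log-Lipschitz cone via Birkhoff--Hilbert projective contraction (cf.\ \cite{A}, \cite{F}), which yields strict positivity automatically. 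Two smaller issues: your bound should read $\|R_\nu\|_{X\to X}\le C(c^m)^\nu\|L^\nu\|_{Y\to Y}$ (the top-order chain-rule terms involve $D^\beta f$, not $L^\nu f$); and in (iv), algebraic simplicity needs a positivity argument (compare $h+tv$ under $\Lambda/r$ for large $t$), since unbounded growth of $(\Lambda/r)^k h$ on a Jordan block does not by itself contradict $\rho(\Lambda)<r$.
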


\begin{remark}
\label{rem:1.2} If $\alpha$ is a multi-index with 
$|\alpha|\le m$, where $m \ge 1$
is as in (H4.1) and (H4.2), it follows from \eqref{1.10} that
\begin{equation}
\label{1.11}
\lim_{k \rightarrow \infty} \left(\frac{1}{r}\right)^k D^{\alpha} \Lambda^k(u) 
=s_u D^{\alpha} v,
\end{equation}
and
\begin{equation}
\label{1.12}
\lim_{k \rightarrow \infty} \left(\frac{1}{r}\right)^k \Lambda^k(u) 
=s_u v,
\end{equation}
where the convergence in \eqref{1.11} and \eqref{1.12} is in the topology of
$C_{\C}(\bar H)$, the Banach space of continuous functions $f: \bar H \to \C$.
\end{remark}
It follows from \eqref{1.11} and \eqref{1.12} that for any
multi-index $\alpha$ with $|\alpha| \le m$,
\begin{equation}
\label{1.13}
\lim_{k \rightarrow \infty} \frac{(D^{\alpha} \Lambda^k(u))(x)}
{\Lambda^k(u)(x)} = \frac{(D^{\alpha} (v))(x)}
{v(x)},
\end{equation}
where the convergence in \eqref{1.13} is uniform in $x \in \bar H$.
If we choose $u(x) =1$ for all $x \in \bar H$, it follows from \eqref{1.6}
that for all multi-indices $\alpha$ with $|\alpha| \le m$, we have
\begin{equation}
\label{1.14}
\lim_{k \rightarrow \infty}  \frac{D^{\alpha} (\sum_{\w \in B_k} g_\w(x))}
{\sum_{\w \in B_k} g_\w (x)} = \frac{D^{\alpha} v(x)}{v(x)},
\end{equation}
where the convergence in \eqref{1.14} is uniform in $x \in \bar H$. We shall
use \eqref{1.14} in our further work to obtain explicit bounds on 
$\sup\left\{|D^{\alpha} v(x)|/v(x): x \in \bar H\right\}$.

Direct analogues of Theorem 5.5 in \cite{E} exist when $\B$ is countable but
not finite, but such analogues were not stated or proved in \cite{E}.  We
shall make do here with less precise theorems which we shall prove by an {\it
  ad hoc} argument in the next section. We refer to Lemma~5.3 in Section 5 of
\cite{N-P-L}, Theorem~5.3 on p. 86 of \cite{A} and Section 5 of \cite{A} for
more information about existence of positive eigenfunctions when $\B$ is
infinite.

\section{The Case of M\"obius Transformations}
\label{sec:mobius}
By working with partial derivatives and using methods like those in Section 5
of \cite{hdcomp1}, it is possible to obtain explicit estimates on partial
derivatives of $v_s(x)$ in the generality of Theorem~\ref{thm:1.1}.  However,
for reasons of length and in view of the immediate applications in this paper,
we shall not treat the general case here and shall now specialize to the case
that the mappings $\theta_{\be}(\cdot)$ are given by M\"obius
transformations which map a given bounded open subset $H$ of $\C:=\R^2$ into
$H$. Specifically, throughout this section we shall usually assume:

\noindent (H5.1): $\gamma \ge 1$ is a given real number and $\B$ is a finite
collection of complex numbers $\be$ such that $\Re(\be) \ge \gamma$ for
all $\be \in \B$. For each $\be \in \B$, $\theta_{\be}(z):= 1/(z +
\be)$ for $z \in \C \setminus \{- \be\}$.

The assumption in (H5.1) that $\gamma \ge 1$ is only a convenience; and the
results of this section can be proved under the weaker assumption that $\gamma
>0$.

For $\gamma >0$ we define $G_{\gamma} \in \C$ by
\begin{equation} 
\label{4.1}
G_{\gamma} = \{ z \in \C: |z - 1/(2 \gamma)|
< 1/(2 \gamma) \}.
\end{equation}
It is easy to check that if $w \in \C$ and $\Re(w) > \gamma$, then
$(1/w) \in G_{\gamma}$.  It follows that if $\Re(z) >0$, $\be \in
\C$ and $\Re(\be) \ge \gamma >0$, then $\theta_{\be}(z) \in \bar
G_{\gamma}$. Let $H$ be a bounded, open, mildly regular subset of $\C
= \R^2$ such that $H \supset G_{\gamma}$ and $H \subset \{z : \Re(z)
>0\}$, and let $\B$ denote a finite set of complex numbers such that
$\Re(\be) \ge \gamma >0$ for all $\be \in \B$. We define a bounded
linear map $\Lambda_s:C^m_{\C}(\bar H) \to C^m_{\C}(\bar H)$, where $m$ is a
positive integer and $s \ge 0$, by
\begin{equation} 
\label{4.2}
(\Lambda_s(f))(z) = \sum_{\be \in\B} \Big|\frac{d}{dz}
\theta_{\be}(z)\Big|^s f(\theta_{\be}(z)):= \sum_{\be \in\B} 
\frac{1}{|z+ \be|^{2s}} f(\theta_{\be}(z)).
\end{equation}
As in Section~\ref{sec:intro}, $L_s: C_{\C}(\bar H) \to C_{\C}(\bar H)$ is
defined by \eqref{4.2}.  We use different letters to emphasize that
$\sigma(\Lambda_s) \neq \sigma(L_s)$, although $r(\Lambda_s) = r(L_s)$.

If all elements of $\B$ are real, we can restrict attention to the real line
and, as we shall see, the analysis is much simpler.  In this case we abuse
notation and take $G_{\gamma} = (0, 1/\gamma) \subset \R^2$ and $H= (0,a)$, $a
\ge 1/\gamma$.  For $f \in C^m_{\C}(\bar H)$ and $x \in \bar H$, \eqref{4.2}
takes the form
\begin{equation*}
(\Lambda_s(f))(x) = \sum_{\be \in\B} \frac{1}{(x+ \be)^{2s}}
f(\theta_{\be}(x)).
\end{equation*}

If, for $1 \le j \le n$, $M_j = \bigl( \begin{smallmatrix} a_j & b_j \\ c_j &
  d_j \end{smallmatrix} \bigr)$ is a $2 \times 2$ matrix with complex entries
and $\det(M_j) = a_j d_j - b_j c_j \neq 0$, define a M\"obius transformation
$\psi_j(z) = (a_j z + b_j)/(c_j z + d_j)$.  It is well-known that
\begin{equation}
\label{4.4}
(\psi_1 \circ \psi_2 \circ \cdots \circ \psi_n)(z) = (A_n z + B_n)/(C_n z +
D_n),
\end{equation}
where
\begin{equation}
\label{4.5}
\begin{pmatrix} A_n & B_n \\ C_n &   D_n \end{pmatrix}
= M_1 M_2 \cdots  M_n.
\end{equation}

If $\B$ is a finite set of complex numbers $\be$ such that
$\Re(\be) \ge \gamma >0$ for all $\be \in \B$, we define $\B_{\nu}$
as before by
\begin{equation*}
\B_{\nu} = \{ \w = (\be_1, \be_2, \ldots, \be_{\nu}) : \be_j \in
\B \text{ for } 1 \le j \le \nu\}
\end{equation*}
and $\theta_{\w} = \theta_{\be_n} \circ \theta_{\be_{n-1}} \cdots \circ
\theta_{\be_1}$. Given $\w = (\be_1, \be_2, \ldots, \be_{\nu}) 
\in \B_{\nu}$, we define
\begin{equation}
\label{4.6} 
\tilde \w = (\be_{\nu}, \be_{\nu-1}, \ldots, \be_{1})
\end{equation}
so
\begin{equation}
\label{4.7}
\theta_{\tilde \w} = \theta_{\be_1} \circ \theta_{\be_{2}} \cdots \circ
\theta_{\be_n}.
\end{equation}
For $\Lambda_s$ as in \eqref{4.2} $\nu \ge 1$, and $f \in C^m_{\C}(\bar H)$,
recall that
\begin{equation*}
(\Lambda_s^{\nu}(f))(z) = \sum_{\w \in \B_{\nu}}
\Big|\frac{d \theta_{\w}(z)}{dz}\Big|^s f(\theta_{\w}(z))
= \sum_{\w \in \B_{\nu}} \Big|\frac{d \theta_{\tilde \w}(z)}{dz}\Big|^s 
f(\theta_{\tilde \w}(z)).
\end{equation*}

The following lemma allows us to apply Theorem~\ref{thm:1.1} to $\Lambda_s$ in
\eqref{4.2}.

\begin{lem}
\label{lem:4.1}
Let $\be_1$ and $\be_2$ be complex numbers with $\Re(\be_j) \ge \gamma
\ge 1$ for $j =1,2$.  If $\psi_j(z) = 1/(z + \be_j)$ for
$\Re(z) \ge 0$ and $\theta = \psi_1 \circ \psi_2$, then for all $z, w$
with $\Re(z) \ge 0$ and $\Re(w) \ge 0$,
\begin{equation*}
|\theta(z) - \theta(w)| \le (\gamma^2 +1)^{-2} |z-w|.
\end{equation*}
\end{lem}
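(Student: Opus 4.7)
\medskip

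\noindent\textbf{Proof plan for Lemma~\ref{lem:4.1}.}

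The plan is to compute $\theta'$ explicitly, bound $|\theta'|$ uniformly on the closed right half-plane, and then use the fact that the right half-plane is convex and $\theta$ is holomorphic there to pass from the pointwise derivative bound to the Lipschitz bound on $\theta$.

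First I would compose the two Möbius transformations. Writing $\theta(z) = \psi_1(\psi_2(z)) = 1/(\psi_2(z)+\be_1)$ and simplifying gives
\begin{equation*}
\theta(z) = \frac{z + \be_2}{\be_1 z + \be_1\be_2 + 1} = \frac{z+\be_2}{1+\be_1(z+\be_2)},
\end{equation*}
which matches the matrix-product formula in \eqref{4.4}--\eqref{4.5} with $M_j = \bigl(\begin{smallmatrix} 0 & 1 \\ 1 & \be_j \end{smallmatrix}\bigr)$. A direct differentiation (or the general formula for the derivative of a Möbius transformation) yields
\begin{equation*}
\theta'(z) = \frac{1}{\bigl(1+\be_1(z+\be_2)\bigr)^{2}}.
\end{equation*}
Once this is in hand, the lemma reduces to the pointwise estimate $|1+\be_1(z+\be_2)| \ge \gamma^2+1$ for every $z$ with $\Re(z)\ge 0$.

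The core of the argument, and the step I expect to be the main obstacle, is this lower bound. Setting $w := z+\be_2$, we have $\Re(w) \ge \gamma$ and $\Re(\be_1) \ge \gamma$, and the task becomes: show that $|1+\be_1 w| \ge 1+\gamma^2$ whenever both $\be_1$ and $w$ lie in the half-plane $\{\Re(\cdot)\ge\gamma\}$, provided $\gamma \ge 1$. My plan is first to reduce to the boundary. Writing $\be_1 = \gamma+s+it$ with $s\ge 0$, we compute
\begin{equation*}
\frac{\partial}{\partial s}|1+\be_1 w|^{2} \;=\; 2\,\Re(w) + 2(\gamma+s)|w|^{2} \;>\;0,
\end{equation*}
so $|1+\be_1 w|^2$ is strictly increasing in $\Re(\be_1)$; by symmetry the same holds in $\Re(w)$. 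Hence the minimum over the product of half-planes is attained on the double boundary $\Re(\be_1)=\Re(w)=\gamma$. There, setting $\be_1 = \gamma+it$ and $w = \gamma+iv$ and expanding, one obtains
\begin{equation*}
|1+\be_1 w|^{2} - (1+\gamma^{2})^{2} \;=\; (1-tv)^{2} + \gamma^{2}(t^{2}+v^{2}) - 1.
\end{equation*}
The right-hand side is nonnegative precisely when $\gamma \ge 1$: by AM-GM, $\gamma^2(t^2+v^2)\ge 2\gamma^2|tv|\ge 2|tv|$, and then a short case check on the sign of $tv$ shows that $(1-tv)^2 + 2|tv|\ge 1$. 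Equality occurs only at $t=v=0$, i.e.\ at $\be_1=\be_2=\gamma$ and $z=0$. This is where the hypothesis $\gamma\ge 1$ is used essentially; the estimate fails for small $\gamma$.

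Finally, the pointwise bound $|\theta'(\zeta)|\le (\gamma^{2}+1)^{-2}$ is upgraded to the desired Lipschitz statement by observing that the pole of $\theta$ is at $-\be_2-1/\be_1$, which has negative real part (since $\Re(\be_2)\ge\gamma$ and $\Re(1/\be_1)=\Re(\be_1)/|\be_1|^2>0$), so $\theta$ is holomorphic on the convex set $\{\Re(\zeta)\ge 0\}$. For any $z,w$ with $\Re(z)\ge 0$, $\Re(w)\ge 0$, the segment $[w,z]$ lies in this half-plane and
\begin{equation*}
\theta(z)-\theta(w) \;=\; (z-w)\int_{0}^{1}\theta'\bigl((1-t)w + tz\bigr)\,dt,
\end{equation*}
which yields $|\theta(z)-\theta(w)| \le (\gamma^{2}+1)^{-2}|z-w|$, as required.
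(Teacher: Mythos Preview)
Your proposal is correct and follows the same high-level strategy as the paper: bound $|\theta'(z)|$ pointwise on the closed right half-plane and use convexity of that half-plane to obtain the Lipschitz estimate. The paper states this reduction without further comment; you spell out that the pole of $\theta$ lies in the open left half-plane and write out the integral form of the mean value inequality, which is fine.

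The two arguments diverge in how the key inequality $|1+\be_1(z+\be_2)|\ge \gamma^{2}+1$ is proved. The paper factors the denominator as $|\be_1|\,|z+\be_2+1/\be_1|$, bounds the second factor below by its real part (throwing away the imaginary part of $z+\be_2+1/\be_1$), and then studies the resulting function $g(u,v)=(u^2+v^2)\bigl(u/(u^2+v^2)+\gamma\bigr)^2$ of $\be_1=u+iv$ by a one-variable monotonicity argument in $v$. You instead set $w=z+\be_2$, observe that $|1+\be_1 w|$ is symmetric in $\be_1$ and $w$, show monotonicity in each real part to push both down to the boundary $\Re(\be_1)=\Re(w)=\gamma$, and then verify the remaining two-real-parameter inequality $(1-tv)^2+\gamma^2(t^2+v^2)\ge 1$ by AM--GM. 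Your route is a bit more symmetric and makes the role of the hypothesis $\gamma\ge 1$ equally transparent (the paper uses it to get $\gamma^{2}\ge u^{2}/(u^{2}+v^{2})^{2}$; you use it to get $\gamma^{2}(t^{2}+v^{2})\ge 2|tv|$). Both approaches are short and of comparable difficulty.
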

\begin{proof}
It suffices to prove that $|(d \theta/dz)(z)| \le (\gamma^2 +1)^{-2}$
for all $z \in \C$ with $\Re(z) \ge 0$.  Using \eqref{4.4} and \eqref{4.5}
we see that
\begin{equation*}
|(d \theta/dz)(z)| = |\be_1|^{-2} |z + (1/\be_1) + \be_2|^{-2},
\end{equation*}
so it suffices to prove that
$|\be_1|^2 \, |z + (1/\be_1) + \be_2|^{2} \ge (\gamma^2 +1)^{2}$
for $\Re(z) \ge 0$. If we write $\be_1 = u+ iv$ with $u \ge \gamma$,
\begin{equation*}
\Re(z + (1/\be_1) + \be_2) \ge u/(u^2 + v^2)  + \gamma,
\end{equation*}
so
\begin{equation*}
|z + (1/\be_1) + \be_2|^{2} \ge [u/(u^2 + v^2)  + \gamma]^2
\end{equation*}
and
\begin{multline*}
|\be_1|^2 \, |z + (1/\be_1) + \be_2|^{2} \ge (u^2 + v^2)
\Big[\frac{u^2}{(u^2 + v^2)^2} + \frac{2 u \gamma}{(u^2 + v^2)} + \gamma^2
\Big]
\\
= \frac{u^2}{(u^2 + v^2)} + 2 u \gamma + \gamma^2(u^2 + v^2) = g(u,v).
\end{multline*}
Because $u \ge \gamma$, $g(u,0) = 1 + 2 \gamma^2 + \gamma^4
= (\gamma^2 +1)^2$.  Using the fact that $u \ge \gamma \ge 1$, we also see
that for $v \ge 0$
\begin{equation*}
\frac{ \partial g(u,v)}{\partial v} = \frac{-u^2(2v)}{(u^2 + v^2)^2}
+ 2 \gamma^2 v \ge 0,
\end{equation*}
which implies that $g(u,v) \ge g(u,0) = (\gamma^2 +1)^{2}$ for $u \ge \gamma$
and $v \ge 0$.  Since $g(u,-v) = g(u,v)$,
$g(u,v) \ge (\gamma^2 +1)^{2}$ for $v \le 0$ and $u \ge \gamma$.
\end{proof}

With the aid of Lemma~\ref{lem:4.1}, the following theorem is an immediate
corollary of Theorem~\ref{thm:1.1}.

\begin{thm}
\label{thm:4.2}
Assume (H5.1) and let $H$ be a bounded, open mildly regular subset of
$\{z \in \C : \Re(z) >0\}$ such that $H \supset G_{\gamma}$, where
$G_{\gamma}$ is defined by \eqref{4.1}.  For a
given positive integer $m$ and for $s > 0$, let
$X = C^m_{\C}(\bar H)$ and $Y = C_{\C}(\bar H)$ and let $
\Lambda_s: X \to X$ and $L_s:Y \to Y$ be given by \eqref{4.2}.
If $r(\Lambda_s)$ (respectively, $r(L_s)$) denotes the spectral radius
of $\Lambda_s$ (respectively, $L_s$), we have $r(\Lambda_s) > 0$ and
$r(\Lambda_s) = r(L_s)$. If $\rho(\Lambda_s)$ denotes the essential
spectral radius of $\Lambda_s$,
\begin{equation*}
\rho(\Lambda_s) \le (\gamma^2 +1)^{-m} r(\Lambda_s).
\end{equation*}
For each $s >0$, there exists $v_s \in X$ such that $v_s(z) >0$ for all
$z \in \bar H$ and $\Lambda_s(v_s) = r(\Lambda_s)v_s$.  All the statements
of Theorem~\ref{thm:1.1} are true in this context whenever $\Lambda$ and $L$
in Theorem~\ref{thm:1.1} are replaced by $\Lambda_s$ and $L_s$ respectively.
\end{thm}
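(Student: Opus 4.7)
The proof is explicitly flagged as an immediate corollary of Theorem~\ref{thm:1.1}, so the plan is simply to verify hypotheses (H4.1), (H4.2), and (H4.3) for the operator $\Lambda_s$ in the M\"obius setting, with $g_{\be}(z) = |z+\be|^{-2s}$. Once the three hypotheses are in hand, the conclusions about $r(\Lambda_s) > 0$, $r(\Lambda_s) = r(L_s)$, existence of a strictly positive eigenfunction $v_s \in X$, algebraic simplicity, and the remaining assertions of Theorem~\ref{thm:1.1} transfer verbatim.

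First I would dispose of (H4.1) and (H4.2). For (H4.1), note that $\Re(z+\be) \ge \gamma \ge 1$ for all $z \in \bar H$ and $\be \in \B$, so $|z+\be| \ge \gamma > 0$ on $\bar H$. Consequently $g_{\be}(z) = |z+\be|^{-2s} = \bigl((x+\Re \be)^2 + (y+ \Im \be)^2\bigr)^{-s}$ is strictly positive and $C^{\infty}$ on $\bar H$. For (H4.2), the maps $\theta_{\be}$ are analytic away from $-\be$ and so are $C^{\infty}$ on $\bar H$; moreover, for $z \in H$ we have $\Re(z) > 0$ and $\Re(\be) \ge \gamma$, hence $\Re(z+\be) > \gamma$, which places $\theta_{\be}(z) = 1/(z+\be)$ in the open disk $G_{\gamma}$. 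Since $H \supset G_{\gamma}$, this yields $\theta_{\be}(H) \subset H$.

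The main substantive point is (H4.3), and here Lemma~\ref{lem:4.1} does all the work. Take $\mu = 2$ and let $\kappa = (\gamma^2+1)^{-2}$; since $\gamma \ge 1$, $\kappa \le 1/4 < 1$. For any $\w = (\be_1, \be_2) \in \B_2$, the composition $\theta_{\w} = \theta_{\be_2} \circ \theta_{\be_1}$ is of the form treated by Lemma~\ref{lem:4.1}, which gives
\begin{equation*}
|\theta_{\w}(z) - \theta_{\w}(w)| \le (\gamma^2+1)^{-2}|z-w|
\end{equation*}
for all $z,w$ with $\Re(z), \Re(w) \ge 0$, and in particular on $\bar H$. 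This verifies (H4.3) with this choice of $\mu$ and $\kappa$.

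With all hypotheses verified, Theorem~\ref{thm:1.1} applies and delivers every assertion of Theorem~\ref{thm:4.2}, including the essential spectral radius bound: taking $c = \kappa^{1/\mu} = (\gamma^2+1)^{-1}$, we obtain $\rho(\Lambda_s) \le c^m r(\Lambda_s) = (\gamma^2+1)^{-m} r(\Lambda_s)$, which is exactly the stated inequality. The only step that carries any real content is Lemma~\ref{lem:4.1}, which has already been proved; the remainder is a routine hypothesis check, so I do not anticipate any genuine obstacle here.
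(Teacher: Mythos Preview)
Your proposal is correct and matches the paper's approach exactly: the paper states that Theorem~\ref{thm:4.2} is an immediate corollary of Theorem~\ref{thm:1.1} via Lemma~\ref{lem:4.1}, and you have simply written out the hypothesis check that the paper leaves implicit. The computation $c = \kappa^{1/\mu} = (\gamma^2+1)^{-1}$ giving the essential spectral radius bound is the key point, and you have it right.
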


In the notation of Theorem~\ref{thm:4.2}, it follows from \eqref{1.14} that
for any multi-index $\alpha=(\alpha_1,\alpha_2)$ with $\alpha_1 + \alpha_2 \le
m$ and for $z = x+iy = (x,y)$
\begin{equation}
\label{4.11}
\lim_{\nu \rightarrow \infty} \frac{D^{\alpha}\left(\sum_{\w \in \B_{\nu}}
\Big|\frac{d}{dz} \theta_{\w}(z)\Big|^s\right)}
{\sum_{\w \in \B_{\nu}} \Big|\frac{d}{dz} \theta_{\w}(z)\Big|^s}
= \frac{D^{\alpha} v_s(x,y)}{v_s(x,y)},
\end{equation}
where the convergence is uniform in $(x,y):=z \in \bar H$
and $D^{\alpha} = (\partial/\partial x)^{\alpha_1}
(\partial/\partial y)^{\alpha_2}$.

\begin{lem}
\label{lem:4.3}
Let $\be_j$, $j \ge 1$ be a sequence of complex numbers with
$\Re(\be_j) \ge \gamma > 0$ for all $j$.  For complex numbers $z$,
define $\theta_{\be_j}(z) = (z + \be_j)^{-1}$ and define matrices
$M_j =
\bigl(\begin{smallmatrix} 0 & 1 \\
  1 & \be_j \end{smallmatrix}\bigr)$.  Then for $n \ge 1$,
\begin{equation}
\label{4.12}
M_1 M_2 \cdots M_n = \begin{pmatrix} A_{n-1} & A_n \\ B_{n-1} & B_n
\end{pmatrix},
\end{equation}
where $A_0 =0 $, $A_1 =1$, $B_0 =1$, $B_1 = \be_1$ and for $n \ge 1$,
\begin{equation}
\label{4.13}
A_{n+1} = A_{n-1} + \be_{n+1} A_n \text{ and }
B_{n+1} = B_{n-1} + \be_{n+1} B_n.
\end{equation}
Also,
\begin{equation*}
(\theta_{\be_1} \circ \theta_{\be_{2}} \cdots \circ \theta_{\be_n})(z)
= (A_{n-1} z + A_n)/(B_{n-1} z + B_n),
\end{equation*}
and we have
\begin{equation}
\label{4.14}
\Re(B_n/B_{n-1}) \ge \gamma
\end{equation}
and
\begin{equation}
\label{4.15}
\Big|\frac{d}{dz}\Big[\frac{A_{n-1} z + A_n}{B_{n-1} z + B_n}\Big]\Big|^s
= |B_{n-1}|^{-2s} |z + B_n/B_{n-1}|^{-2s}.
\end{equation}
\end{lem}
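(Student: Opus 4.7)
The plan is to prove the four claims in the order they are stated, since each feeds into the next. The matrix identity \eqref{4.12} and the closed form for $\theta_{\be_1}\circ\cdots\circ\theta_{\be_n}$ are both instances of the standard fact (recalled in \eqref{4.4}--\eqref{4.5}) that the correspondence between $2\times 2$ invertible matrices and M\"obius transformations is a homomorphism. So I would proceed by induction on $n$: the base case $n=1$ gives $M_1=\bigl(\begin{smallmatrix}0&1\\1&\be_1\end{smallmatrix}\bigr)=\bigl(\begin{smallmatrix}A_0&A_1\\B_0&B_1\end{smallmatrix}\bigr)$, which matches the stated initial values and the M\"obius formula $\theta_{\be_1}(z)=1/(z+\be_1)=(A_0 z+A_1)/(B_0 z+B_1)$. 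For the inductive step, right-multiplying the block $\bigl(\begin{smallmatrix}A_{n-1}&A_n\\B_{n-1}&B_n\end{smallmatrix}\bigr)$ by $M_{n+1}$ produces columns $(A_n,B_n)^T$ and $(A_{n-1}+\be_{n+1}A_n,\,B_{n-1}+\be_{n+1}B_n)^T$, which by \eqref{4.13} are exactly $(A_n,B_n)^T$ and $(A_{n+1},B_{n+1})^T$. The composition formula for $(\theta_{\be_1}\circ\cdots\circ\theta_{\be_n})(z)$ then follows from \eqref{4.4}--\eqref{4.5}.

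For \eqref{4.14}, I would set $w_n := B_n/B_{n-1}$ and use the recurrence to derive $w_{n+1}=\be_{n+1}+1/w_n$ (dividing $B_{n+1}=B_{n-1}+\be_{n+1}B_n$ by $B_n$). Then $w_1=\be_1$, so $\Re(w_1)\ge\gamma$; and inductively, if $\Re(w_n)\ge\gamma>0$, writing $w_n=u+iv$ with $u\ge\gamma$ gives $\Re(1/w_n)=u/(u^2+v^2)\ge 0$, hence $\Re(w_{n+1})=\Re(\be_{n+1})+\Re(1/w_n)\ge\gamma$. One side remark to check is that $B_n\ne 0$ for all $n$, which also follows inductively from $\Re(w_n)\ge\gamma>0$ (implying $w_n\ne 0$ and hence $B_n=w_n B_{n-1}\ne 0$ given $B_0=1$).

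For \eqref{4.15}, a direct calculation gives
\begin{equation*}
\frac{d}{dz}\Big[\frac{A_{n-1}z+A_n}{B_{n-1}z+B_n}\Big]
= \frac{A_{n-1}B_n - A_n B_{n-1}}{(B_{n-1}z+B_n)^2}.
\end{equation*}
The numerator is (up to sign) the determinant of $M_1M_2\cdots M_n$. Since $\det M_j=-1$ for every $j$, one gets $A_{n-1}B_n-A_n B_{n-1}=(-1)^n$, so its modulus is $1$. Factoring $B_{n-1}$ out of the denominator then yields
\begin{equation*}
\Big|\frac{d}{dz}\Big[\frac{A_{n-1}z+A_n}{B_{n-1}z+B_n}\Big]\Big|^{s}
= |B_{n-1}|^{-2s}\,|z+B_n/B_{n-1}|^{-2s},
\end{equation*}
which is \eqref{4.15}. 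None of the steps is a serious obstacle: the only thing requiring any care is bookkeeping in the induction for the matrix identity and ensuring $B_{n-1}\ne 0$ so that dividing by $B_{n-1}$ in \eqref{4.14} and \eqref{4.15} is legitimate, but this is handled by the same positivity argument used for \eqref{4.14}.
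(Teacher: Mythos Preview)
Your proof is correct and follows essentially the same approach as the paper: induction on $n$ for \eqref{4.12}, induction on $n$ for \eqref{4.14} via the recursion $B_{n+1}/B_n=\be_{n+1}+B_{n-1}/B_n$ together with the observation that $\Re(w)\ge\gamma$ forces $\Re(1/w)\ge 0$, and the determinant calculation $\det(M_1\cdots M_n)=(-1)^n$ for \eqref{4.15}. Your explicit remark that $B_n\ne 0$ must be checked (and follows from the same induction) is a welcome addition that the paper leaves implicit.
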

\begin{proof}
Equation \eqref{4.12} follows by induction on $n$. It is obviously true
for $n=1$. If we assume that \eqref{4.12} is satisfied for some $n \ge 1$,
then
\begin{equation*}
M_1 M_2 \cdots M_n M_{n+1}= \begin{pmatrix} A_{n-1} & A_n \\ B_{n-1} & B_n
\end{pmatrix} \begin{pmatrix} 0 & 1 \\ 1 & \be_{n+1} \end{pmatrix}
= \begin{pmatrix} A_n & A_{n-1} + \be_{n+1} A_n \\
B_n & B_{n-1} + \be_{n+1} B_n \end{pmatrix},
\end{equation*}
which proves \eqref{4.12} with $A_{n+1}$ and $B_{n+1}$ defined by
\eqref{4.13}.
Similarly, we prove \eqref{4.14} by induction on $n$.  The case $n=1$ is
obvious,  Assuming that \eqref{4.13} is satisfied for some $n \ge 1$,
we obtain from \eqref{4.13} that
\begin{equation*}
B_{n+1}/B_n = B_{n-1}/B_n + \be_{n+1}.
\end{equation*}
Because $\Re(w) \ge \gamma$, where $w:= B_n/B_{n-1}$, we see that
$|1/w - 1/(2\gamma)| \le 1/(2 \gamma)$ and
 $\Re(1/w) = \Re(B_{n-1}/B_n) \ge 0$, so
\begin{equation*}
\Re(B_{n+1}/B_n) \ge \Re(B_{n-1}/B_n)  + \Re(\be_{n+1}) \ge \gamma.
\end{equation*}
Hence \eqref{4.13} is satisfied for all $n \ge 1$.
Because $\det(M_j) = -1$ for all $j \ge 1$, we get that
$\det \bigl(\begin{smallmatrix} A_{n-1} & A_n \\ B_{n-1} & B_n
\end{smallmatrix}\bigr) = (-1)^n$, and \eqref{4.15} follows.
\end{proof}

Before proceeding further, it will be convenient to establish some
elementary calculus propositions.  For $(u,v) \in \R^2 \setminus\{(0,0)\}$
and $s >0$, define
\begin{equation*}
G(u,v;s) = (u^2 + v^2)^{-s}.
\end{equation*}
Define $D_1 = (\partial/\partial u)$, so $D_1^m = (\partial/\partial u)^m$
for positive integers $m$; similarly, let 
$D_2 = (\partial/\partial v)$ and $D_2^m = (\partial/\partial v)^m$.
\begin{lem}
\label{lem:4.4}
For positive integers $m$, there exist polynomials in $u$ and $v$ with
coefficients depending on $s$, $P_m(u,v;s)$ and $Q_m(u,v;s)$, such that
\begin{equation*}
D_1^m G(u,v;s) = P_m(u,v;s) G(u,v;s+m), \
D_2^m G(u,v;s) = Q_m(u,v;s) G(u,v;s+m).
\end{equation*}
Furthermore, we have $P_1(u,v;s) = -2 s u$, $Q_1(u,v;s) = -2 s v$,
and for positive integers $m$,
\begin{equation*}
P_{m+1}(u,v;s) = (u^2 + v^2) (D_1 P_m(u,v;s)) - 2(s+m) u P_m(u,v;s)
\end{equation*}
and
\begin{equation*}
Q_{m+1}(u,v;s) = (u^2 + v^2) (D_2 Q_m(u,v;s)) - 2(s+m) v Q_m(u,v;s).
\end{equation*}
\end{lem}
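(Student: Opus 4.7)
The plan is to prove the statement by induction on $m$, separately for $P_m$ and $Q_m$ (the two arguments are symmetric, so only one requires detailed treatment). The key algebraic observation that makes the induction work is the trivial identity
\begin{equation*}
G(u,v;s+m) = (u^2+v^2)\, G(u,v;s+m+1),
\end{equation*}
which allows us to ``raise'' the exponent parameter by one when we differentiate the factor $G(u,v;s+m)$ and want to repackage the result as a polynomial times $G(u,v;s+m+1)$.

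For the base case $m=1$, I would just compute directly:
\begin{equation*}
D_1 G(u,v;s) = -s(u^2+v^2)^{-s-1}\cdot 2u = (-2su)\, G(u,v;s+1),
\end{equation*}
so $P_1(u,v;s)=-2su$, and similarly $Q_1(u,v;s)=-2sv$. These are polynomials in $u,v$ with coefficients depending on $s$.

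For the inductive step, assume $D_1^m G(u,v;s) = P_m(u,v;s)\, G(u,v;s+m)$ with $P_m$ polynomial in $u,v$. Apply $D_1$ and use the product rule:
\begin{align*}
D_1^{m+1} G(u,v;s) &= (D_1 P_m)\, G(u,v;s+m) + P_m\, D_1 G(u,v;s+m)\\
&= (D_1 P_m)\, G(u,v;s+m) - 2(s+m)u\, P_m\, G(u,v;s+m+1).
\end{align*}
Now using the identity above to rewrite $G(u,v;s+m) = (u^2+v^2)G(u,v;s+m+1)$ in the first term yields
\begin{equation*}
D_1^{m+1} G(u,v;s) = \bigl[(u^2+v^2)(D_1 P_m) - 2(s+m)u\, P_m\bigr] G(u,v;s+m+1),
\end{equation*}
which gives exactly the claimed recursion for $P_{m+1}$. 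Since $P_m$ is polynomial in $u,v$, so is $(u^2+v^2)D_1 P_m - 2(s+m)u P_m$, completing the induction. The argument for $Q_m$ is identical after swapping the roles of $u$ and $v$ and using $D_2$ in place of $D_1$.

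There is essentially no obstacle here — the result is a bookkeeping lemma whose only substance is the identity $G(\cdot;s+m)=(u^2+v^2)G(\cdot;s+m+1)$. The only small care needed is to keep track that the polynomial structure is preserved at each step (which is immediate since the recursion involves only multiplication by the polynomial $u^2+v^2$ and by monomials, and differentiation of a polynomial).
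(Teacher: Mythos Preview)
Your proof is correct and essentially identical to the paper's own argument: both proceed by induction on $m$, compute the base case directly, and in the inductive step apply the product rule to $P_m(u,v;s)\,G(u,v;s+m)$ and then use $G(u,v;s+m)=(u^2+v^2)\,G(u,v;s+m+1)$ to extract the stated recursion. The paper uses this identity implicitly in its final displayed line, whereas you isolate it explicitly up front, but the logic is the same.
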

\begin{proof}
If $m =1$,
\begin{equation*}
D_1 G(u,v;s) = (-2 s u) \, G(u,v;s+1), \qquad 
D_2 G(u,v;s) = (-2 s v) (u^2 + v^2; s+1),
\end{equation*}
so $P_1(u,v;s) = -2 su$ and $Q_1(u,v;s) = -2 sv$. 

We now argue by induction and assume we have proved the existence
of $P_j(u,v;s)$ and $Q_j(u,v;s)$ for $1 \le j \le m$.  It follows that
\begin{multline*}
D_1^{m+1} G(u,v;s) = D_1[P_m(u,v;s) G(u,v;s+m)]
\\
 = [D_1 P_m(u,v;s)] G(u,v;s+m)] + P_m(u,v;s)[-2(s+m)u]G(u,v;s+m+1)
\\
 = [(u^2 + v^2) (D_1 P_m(u,v;s)) -2(s+m) u P_m(u,v;s)] G(u,v;s+m+1).
\end{multline*}
This proves the lemma with
\begin{equation*}
P_{m+1}(u,v;s):= (u^2 + v^2) (D_1 P_m(u,v;s)) -2(s+m) u P_m(u,v;s).
\end{equation*}
An exactly analogous argument, which we leave to the reader, shows that
\begin{equation*}
Q_{m+1}(u,v;s):= (u^2 + v^2) (D_2 Q_m(u,v;s)) -2(s+m) v Q_m(u,v;s).
\end{equation*}
\end{proof}
An advantage of working with M\"obius transformations is that one can
easily obtain tractable formulas for expressions like
$(\theta_{\be_1} \circ \theta_{\be_{2}} \cdots \circ \theta_{\be_n})(z)$. Such
formulas allow more precise estimates for the left hand side of \eqref{1.14}
than we obtained in Section 5 of \cite{hdcomp1}.

\begin{lem}
\label{lem:4.5}
In the notation of Lemma~\ref{lem:4.4}, for all $(u,v) \in \R^2 \setminus 
\{(0,0)\}$, for all $s >0$, and all positive integers $m$, 
$P_m(u,v;s) = Q_m(v,u;s)$.
\end{lem}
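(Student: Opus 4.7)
The plan is to exploit the obvious symmetry $G(u,v;s) = (u^2+v^2)^{-s} = G(v,u;s)$. Differentiating both sides of this identity $m$ times with respect to the first variable, we get
\begin{equation*}
\frac{\partial^m}{\partial u^m} G(u,v;s) \;=\; \frac{\partial^m}{\partial u^m} G(v,u;s).
\end{equation*}
The left-hand side is, by definition, $P_m(u,v;s)\, G(u,v;s+m)$. For the right-hand side, $u$ now appears in the \emph{second} slot of $G$, so differentiating with respect to $u$ is the same as applying the operator $D_2$ to $G$ and then evaluating at the swapped point $(v,u)$. Thus the right-hand side equals $Q_m(v,u;s)\, G(v,u;s+m) = Q_m(v,u;s)\, G(u,v;s+m)$. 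Canceling the common positive factor $G(u,v;s+m)$ yields $P_m(u,v;s) = Q_m(v,u;s)$.

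If one prefers a more pedestrian argument that does not invoke the symmetry of $G$ directly, the identity can instead be proved by induction on $m$ using the recursions already established in Lemma~\ref{lem:4.4}. The base case $m=1$ is immediate: $Q_1(v,u;s) = -2s u = P_1(u,v;s)$. For the inductive step, suppose $P_m(u,v;s) = Q_m(v,u;s)$. Writing $R(x,y) := Q_m(x,y;s)$, the induction hypothesis says $R(x,y) = P_m(y,x;s)$, so by the chain rule
\begin{equation*}
(D_2 Q_m)(v,u;s) \;=\; \frac{\partial R}{\partial y}(v,u) \;=\; (D_1 P_m)(u,v;s).
\end{equation*}
Substituting this and $Q_m(v,u;s) = P_m(u,v;s)$ into the recursion for $Q_{m+1}$ and comparing with the recursion for $P_{m+1}$ gives $Q_{m+1}(v,u;s) = P_{m+1}(u,v;s)$, completing the induction.

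There is no real obstacle here; the only mildly subtle point in both approaches is carefully distinguishing between the formal variables $u,v$ appearing in the polynomial $Q_m(\cdot,\cdot;s)$ and the actual slots of the function $G$ being differentiated. The symmetry argument sidesteps the recursions entirely and I would present it as the proof, with the inductive version perhaps sketched as a remark if desired.
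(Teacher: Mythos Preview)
Your proof is correct. Your second, inductive argument is essentially the paper's own proof: the paper also inducts on $m$ using the recursions of Lemma~\ref{lem:4.4}, and the key step there --- written as a limit of difference quotients --- is exactly your chain-rule identification of $(D_2 Q_m)(v,u;s)$ with $(D_1 P_m)(u,v;s)$.

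Your first argument, exploiting the symmetry $G(u,v;s)=G(v,u;s)$ directly, is a genuinely shorter route that the paper does not take. It bypasses the recursions entirely: once one observes that $\partial_u^m$ applied to $G(v,u;s)$ is $(D_2^m G)$ evaluated at $(v,u)$, the identity drops out after canceling the common factor $G(u,v;s+m)$. The inductive version has the minor virtue of staying purely within the polynomial formalism already established, but the symmetry argument is cleaner and is the one worth presenting.
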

\begin{proof}
Fix $s >0$.  We have $P_1(u,v;s) = Q_1(v,u;s)$ for all $(u,v) \neq (0,0)$.
Arguing by mathematical induction, assume that for some positive integer $m$
we have proved that $P_m(u,v;s) = Q_m(v,u;s)$ for all $(u,v) \neq (0,0)$.
For a fixed $(u,v) \neq (0,0)$, we obtain, by virtue of the recursion
formula in Lemma~\ref{lem:4.4},
\begin{align*}
P_{m+1}(v,u;s) &= (u^2 + v^2) \lim_{\Delta v \rightarrow 0}
\frac{P_m(v+ \Delta v, u; s) - P_m(v,u;s)}{\Delta v}
\\
&\qquad -2(s+m)v P_m(v,u;s)
\\
&= (u^2 + v^2) \lim_{\Delta v \rightarrow 0}
\frac{Q_m(u,v+ \Delta v; s) - Q_m(u,v;s)}{\Delta v}
\\
&\qquad -2(s+m)v Q_m(u,v;s) 
\\
&= Q_{m+1}(u,v;s).
\end{align*}
By mathematical induction, we conclude that 
$P_n(u,v;s) = Q_n(v,u;s)$ for all positive integers $n$.
\end{proof}

\begin{remark}
\label{rem:4.6}
By using the recursion formula in Lemma~\ref{lem:4.4}, one can easily
compute $P_j(u,v;s)$ for $1\le j\le 4$.
\begin{align*}
P_1(u,v;s) &= -2s u,
\\
P_2(u,v;s) &= 2s(2s+1)u^2 - 2s v^2,
\\
P_3(u,v;s) &= -2s(2s+1)(2s+2)u^3 + (2s)(2s+2)(3) u v^2,
\\
P_4(u,v;s) &= (2s)(2s+2)[(2s+1)(2s+3)u^4 - 6(2s+3)u^2 v^2 + 3 v^4].
\end{align*}
\end{remark}
By virtue of Lemma~\ref{lem:4.5}, we also obtain formulas for 
$Q_j(v,u;s) = P_j(u,v;s)$. Also, Lemmas~\ref{lem:4.4} and \ref{lem:4.5}
imply that
\begin{equation*}
\frac{D_1^j G(u,v;s)}{G(u,v;s)} = \frac{P_j(u,v;s)}{(u^2+v^2)^j}, \qquad
\frac{D_2^j G(u,v;s)}{G(u,v;s)} = \frac{P_j(v,u;s)}{(u^2+v^2)^j}
\end{equation*}
and the latter formulas will play a useful role in this section. In particular,
for a given constant $\gamma >0$, we shall need good estimates for
\begin{equation*}
\sup \Big\{\frac{D_k^j G(u,v;s)}{G(u,v;s)}: u \ge \gamma, v \in \R\Big\}
\text{ and }
\inf \Big\{\frac{D_k^j G(u,v;s)}{G(u,v;s)}: u \ge \gamma, v \in \R\Big\}
\end{equation*}
where $k=1,2$ and $1 \le j \le 4$.  Although the arguments used to prove these
estimates are elementary, these results will play a crucial role in our
later work.
\begin{lem}
\label{lem:4.7}
Let $\gamma >0$ be a given constant and assume that $u \ge \gamma$ and $v \in
\R$.  Let $D_1 = (\partial/\partial u)$ and $G(u,v;s) = (u^2 + v^2)^{-s}$, where
$s >0$. For $j \ge 1$ we have
\begin{equation*}
\frac{D_1^j G(u,v;s)}{G(u,v;s)}= \frac{P_j(u,v;s)}{(u^2+v^2)^j},
\end{equation*}
where $P_j(u,v;s)$ is as defined in Remark~\ref{rem:4.6}; and the following
estimates are satisfied.
\begin{gather*}
-\frac{2s}{\gamma} \le \frac{D_1 G(u,v;s)}{G(u,v;s)} <0,
\\
-\frac{s}{4\gamma^2(s+1)} \le \frac{D_1^2 G(u,v;s)}{G(u,v;s)} 
\le \frac{2s(2s+1)}{\gamma^2},
\\
-\frac{2s(2s+1)(2s+2)}{\gamma^3} \le \frac{D_1^3 G(u,v;s)}{G(u,v;s)} 
\le \frac{2s(2s+2)}{\gamma^3(s+2)^2},
\\
-\frac{2s(s+1)(2s+2)(3)}{\gamma^4} \le \frac{D_1^4 G(u,v;s)}{G(u,v;s)} 
\le \frac{2s(2s+1)(2s+2)(2s+3)}{\gamma^4}.
\end{gather*}
\end{lem}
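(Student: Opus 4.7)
My plan is to combine Lemma~\ref{lem:4.4}, which gives $\frac{D_1^j G(u,v;s)}{G(u,v;s)} = \frac{P_j(u,v;s)}{(u^2+v^2)^j}$, with the explicit polynomial formulas recorded in Remark~\ref{rem:4.6}. For each $j\in\{1,2,3,4\}$ the estimate reduces to bounding a specific rational function on $\{(u,v):u\ge\gamma,\ v\in\R\}$. Since $P_j$ is homogeneous of degree $j$ in $(u,v)$, the ratio $P_j/(u^2+v^2)^j$ is homogeneous of degree $-j$; in particular, on each ray through the origin the absolute value of the ratio is monotone in $|(u,v)|$, so both its supremum and infimum over the admissible region are attained on the line $u=\gamma$, and the $u$-optimization reduces to plugging in $u=\gamma$ after an optimization in $v$ (or $t=v^2$).

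The cases $j=1,2,3$ are then straightforward one-variable calculus. For $j=1$, $-2su/(u^2+v^2)$ is always negative and is most negative at $v=0$, giving $-2s/u\ge -2s/\gamma$. For $j=2$, differentiating in $t=v^2$ shows that the unique critical point $t=(4s+3)u^2$ is a minimum with value $-s/(4u^2(s+1))$, while the upper bound $2s(2s+1)/\gamma^2$ comes by discarding the negative $-2sv^2$ in the numerator. For $j=3$, the factorization $P_3=2s(2s+2)u[-(2s+1)u^2+3v^2]$ gives two critical points in $v$: $v=0$ (local minimum, value $-2s(2s+1)(2s+2)/u^3$) and $v^2=(s+1)u^2$ (local maximum, value $2s(2s+2)/[u^3(s+2)^2]$); evaluating at $u=\gamma$ yields the claimed bounds.

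The upper bound for $j=4$ reduces, after setting $t=v^2/u^2$, to the polynomial inequality
\begin{equation*}
(2s+1)(2s+3)(1+t)^4 - \bigl[(2s+1)(2s+3) - 6(2s+3)t + 3t^2\bigr] \ge 0 \qquad (t\ge 0,\ s>0),
\end{equation*}
which I would verify by expanding $(1+t)^4$ and checking that every coefficient of the resulting polynomial in $t$ is a positive polynomial in $s$. This gives $P_4/(u^2+v^2)^4 \le 2s(2s+1)(2s+2)(2s+3)/u^4 \le 2s(2s+1)(2s+2)(2s+3)/\gamma^4$.

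The main obstacle is the lower bound for $j=4$, which is \emph{not} sharp: the true minimum is attained at an $s$-dependent point whose coordinates involve irrational roots of a quadratic. To circumvent this I plan to introduce $\tau=v^2/(u^2+v^2)\in[0,1)$, so that the bracket in $P_4$ factors as $(u^2+v^2)^2\,\Phi(\tau)$ with
\begin{equation*}
\Phi(\tau) = 4(s+2)(s+3)\tau^2 - 4(2s+3)(s+2)\tau + (2s+1)(2s+3).
\end{equation*}
Completing the square in $\tau$ gives the universal bound $\Phi(\tau)\ge -3(2s+3)/(s+3)$, and combining this with $(u^2+v^2)^2\ge u^4\ge\gamma^4$ yields
\begin{equation*}
\frac{P_4(u,v;s)}{(u^2+v^2)^4} \ge \frac{-12\,s(s+1)(2s+3)}{(s+3)\,\gamma^4}.
\end{equation*}
The elementary algebraic inequality $(s+1)(s+3)-(2s+3)=s^2+2s\ge 0$ for $s\ge 0$ then upgrades this to $-12s(s+1)^2/\gamma^4 = -2s(s+1)(2s+2)(3)/\gamma^4$, as required.
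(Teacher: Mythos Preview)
Your proof is correct. For $j=1,2,3$ it is essentially the paper's argument: the paper uses the substitution $\rho=v^2/u^2$ while you use $t=v^2$ (or differentiate directly in $v$), but the optimizations are identical and produce the same critical points $\rho=4s+3$ and $\rho=s+1$.

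For $j=4$ your route is genuinely different. For the upper bound the paper drops the negative term $-6(2s+3)u^2v^2$, uses $3\le(2s+1)(2s+3)$ to bound $3v^4$, and then applies $u^4+v^4\le(u^2+v^2)^2$; this is shorter than your coefficient-by-coefficient check, though both work. For the lower bound the paper uses the pair of elementary inequalities $-(u^4+v^4)\le-2u^2v^2$ and $u^2v^2\le(u^2+v^2)^2/4$ to reach $-3(2s)(2s+2)(s+1)/\gamma^4$ directly. Your approach via $\tau=v^2/(u^2+v^2)$ and completing the square first yields the strictly sharper intermediate bound $-12s(s+1)(2s+3)/((s+3)\gamma^4)$, which you then relax using $(2s+3)\le(s+1)(s+3)$. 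Your method is more systematic (it actually locates the true minimum of $\Phi$) and gives a better constant along the way, while the paper's argument is more ad~hoc but avoids any computation of the discriminant.

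One small quibble: your opening homogeneity remark that ``both its supremum and infimum \ldots\ are attained on the line $u=\gamma$'' is not literally true for $j=1$, where the supremum is $0$ and is only approached as $|(u,v)|\to\infty$. You handle this correctly in the $j=1$ case itself, so this does not affect the proof.
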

\begin{proof}
By Remark~\ref{rem:4.6}, 
\begin{equation*}
\frac{D_1^j G(u,v;s)}{G(u,v;s)}= \frac{P_j(u,v;s)}{(u^2+v^2)^j},
\end{equation*}
and Remark~\ref{rem:4.6} provides formulas for $P_j(u,v;s)$.  It follows that
\begin{equation*}
\frac{D_1^j G(u,v;s)}{G(u,v;s)} = \frac{-2su}{u^2+v^2} < 0.
\end{equation*}
Since
\begin{equation*}
\frac{2su}{u^2+v^2} \le \frac{2su}{u^2} \le \frac{2s}{\gamma},
\end{equation*}
we also see that
\begin{equation*}
\frac{D_1 G(u,v;s)}{G(u,v;s)} \ge -\frac{2s}{\gamma}.
\end{equation*}
Using Remark~\ref{rem:4.6}, we see that
\begin{equation*}
\frac{D_1^2 G(u,v;s)}{G(u,v;s)} = \frac{2s(2s+1)u^2 -2s v^2}{(u^2 + v^2)^2},
\end{equation*}
so
\begin{equation*}
\frac{D_1^2 G(u,v;s)}{G(u,v;s)} \le \frac{2s(2s+1)u^2}{(u^2 + v^2)^2}.
\end{equation*}
Since
\begin{equation*}
\frac{u^2}{(u^2+v^2)^2} \le \frac{u^2}{u^4} \le \frac{1}{\gamma^2},
\end{equation*}
we find that
\begin{equation*}
\frac{D_1^2 G(u,v;s)}{G(u,v;s)} \le \frac{2s(2s+1)}{\gamma^2},
\end{equation*}
If we write $v^2 = \rho u^2$, we see that
\begin{equation*}
\frac{D_1^2 G(u,v;s)}{G(u,v;s)} = \frac{2s(2s+1-\rho)}{u^2 (1+ \rho)^2},
\end{equation*}
and if $0 \le \rho \le 2s+1$, we obtain the upper bound given above and a
lower bound of zero. If $\rho > 2s+1$, we see that
\begin{equation*}
\frac{D_1^2 G(u,v;s)}{G(u,v;s)} \ge \frac{2s}{\gamma^2}
\inf\left\{\frac{2s+1-\rho}{(1+ \rho)^2}: \rho > 2s+1\right\}.
\end{equation*}
It is a simple calculus exercise to show that
\begin{equation*}
\inf\left\{\frac{2s+1-\rho}{(1+ \rho)^2}: \rho > 2s+1\right\}
= - \frac{1}{8(s+1)},
\end{equation*}
achieved at $\rho = 4s+3$; and this gives the lower estimate
$-s/[4\gamma^2(s+1)]$ of the lemma.

Using Remark~\ref{rem:4.6} again, we see that
\begin{equation*}
\frac{D_1^3 G(u,v;s)}{G(u,v;s)} = \frac{2s(2s+2)u[-(2s+1)u^2 + 3v^2]}
{(u^2 + v^2)^3}.
\end{equation*}
It follows that
\begin{multline*}
\frac{D_1^3 G(u,v;s)}{G(u,v;s)} \ge -2s(2s+1)(2s+2) \left[\frac{u}{(u^2 + v^2)}
\right]^3
\\
\ge -2s(2s+1)(2s+2) \left[\frac{1}{u}\right]^3
\ge -2s(2s+1)(2s+2) \frac{1}{\gamma^3}.
\end{multline*}
On the other hand, if we write $v^2 = \rho u^2$, then
\begin{multline*}
\frac{D_1^3 G(u,v;s)}{G(u,v;s)} = \frac{2s(2s+2)}{u^3}
\frac{[3 \rho -(2s+1)]}{(1+ \rho)^3}
\\
\le \frac{2s(2s+2)}{\gamma^3}
\sup \left\{\frac{3 \rho -(2s+1)}{(1+ \rho)^3}: \rho \ge 0\right\}.
\end{multline*}
Once again, a straightforward calculus argument shows that
\begin{equation*}
\sup \left\{\frac{3 \rho -(2s+1)}{(1+ \rho)^3}: \rho \ge 0\right\}
= \frac{1}{(s+2)^2}
\end{equation*}
and the supremum is achieved at $\rho = s+1$.  Using this fact, we obtain
the upper estimate of the lemma.

Finally, we obtain from Remark~\ref{rem:4.6} that
\begin{equation*}
\frac{D_1^4 G(u,v;s)}{G(u,v;s)} = \frac{2s(2s+2)[(2s+1)(2s+3)u^4
- 6(2s+3) u^2v^2 + 3 v^4]}{(u^2 + v^2)^4}.
\end{equation*}
Dropping the negative term in the numerator and observing that
$3 \le (2s+1)(2s+3)$ and $u^4 + v^4 \le (u^2 + v^2)^2$, we see that
\begin{align*}
\frac{D_1^4 G(u,v;s)}{G(u,v;s)} &\le \frac{(2s)(2s+1)(2s+2)(2s+3)(u^4+v^4)}
{(u^2 + v^2)^4}
\\
&\le \frac{(2s)(2s+1)(2s+2)(2s+3)}{(u^2 + v^2)^2}
\le \frac{(2s)(2s+1)(2s+2)(2s+3)}{\gamma^4}.
\end{align*}
On the other hand, because $-u^4 - v^4 \le -2u^2 v^2$, we obtain that
\begin{align*}
- \frac{D_1^4 G(u,v;s)}{G(u,v;s)} &\le \frac{(2s)(2s+2)
[-3 u^4 + 6(2s+3)u^2 v^2 - 3 v^4]}{(u^2 + v^2)^4}
\\
&\le \frac{3(2s)(2s+2)[-2 u^2v^2 + (4s+6)u^2 v^2]}{(u^2 + v^2)^4}
\\
&\le \frac{3(2s)(2s+2)[4(s+1)(u^2+ v^2)^2/4]}{(u^2 + v^2)^4}
\\
&\le \frac{3(2s)(2s+2)(s+1)}{(u^2 + v^2)^2} 
\le \frac{3(2s)(2s+2)(s+1)}{\gamma^4},
\end{align*}
which gives the lower estimate of Lemma~\ref{lem:4.7}.
\end{proof}

The following lemma gives analogous estimates for
\begin{equation*}
\frac{D_2^j G(u,v;s)}{G(u,v;s)} = \frac{P_j(v,u;s)}{(u^2 + v^2)^j}.
\end{equation*}
\begin{lem}
\label{lem:4.8}
Let $\gamma >0$ be a given real number, $D_2 = (\partial/\partial v)$ 
and for $s>0$ and $(u,v) \in \R^2 \setminus\{(0,0)\}$, define
$G(u,v;s) = (u^2 + v^2)^{-s}$, If $u \ge \gamma$ and $v \in \R$, we have
the following estimates.
\begin{gather*}
\frac{|D_2 G(u,v;s)|}{G(u,v;s)} \le \frac{s}{\gamma},
\\
-\frac{2s}{\gamma^2} \le \frac{D_2^2 G(u,v;s)}{G(u,v;s)} 
\le \frac{2s(2s+1)}{4\gamma^2},
\\
\frac{|D_2^3 G(u,v;s)|}{G(u,v;s)} 
\le \frac{2s(2s+2)}{\gamma^3} \max\left\{\frac{25\sqrt{5}}{72}
, \frac{2s+1}{8}\right\}
\\
-\frac{2s(s+1)(2s+2)(3)}{\gamma^4} \le \frac{D_2^4 G(u,v;s)}{G(u,v;s)} 
\le \frac{2s(2s+1)(2s+2)(2s+3)}{\gamma^4}.
\end{gather*}
\end{lem}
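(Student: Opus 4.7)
\textbf{Proof proposal for Lemma~\ref{lem:4.8}.}

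The plan is to use Lemma~\ref{lem:4.5}, which gives $P_j(v,u;s) = Q_j(u,v;s)$, so that
\begin{equation*}
\frac{D_2^j G(u,v;s)}{G(u,v;s)} = \frac{Q_j(u,v;s)}{(u^2+v^2)^j} = \frac{P_j(v,u;s)}{(u^2+v^2)^j},
\end{equation*}
and then to substitute the explicit formulas for $P_j$ listed in Remark~\ref{rem:4.6}, obtaining rational functions of $(u,v)$ whose extrema over $\{u \ge \gamma,\ v \in \R\}$ can be estimated by elementary calculus. The arguments will run parallel to those in Lemma~\ref{lem:4.7}, but with the roles of the two variables interchanged: now $|v|$ is unconstrained while $u \ge \gamma$, which means the bounds take a somewhat different form.

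For $j=1$, I would use $P_1(v,u;s) = -2sv$ together with AM-GM: $u^2 + v^2 \ge 2u|v|$, so $|v|/(u^2+v^2) \le 1/(2u) \le 1/(2\gamma)$, giving $|D_2 G|/G \le s/\gamma$. For $j=2$, $P_2(v,u;s) = 2s(2s+1)v^2 - 2su^2$. Dropping the negative term yields the upper bound and reduces the problem to maximizing $v^2/(u^2+v^2)^2$ over $v \in \R$; writing $t = v^2$, calculus gives a maximum $1/(4u^2) \le 1/(4\gamma^2)$ at $t = u^2$. Dropping the positive term gives the lower bound via $u^2/(u^2+v^2)^2 \le 1/u^2 \le 1/\gamma^2$. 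For $j=4$, the expression $P_4(v,u;s) = 2s(2s+2)[(2s+1)(2s+3)v^4 - 6(2s+3)u^2 v^2 + 3u^4]$ is structurally identical (after swapping $u \leftrightarrow v$) to the numerator handled at the end of Lemma~\ref{lem:4.7}. The upper bound follows by discarding the negative middle term and using $(2s+1)(2s+3) \ge 3$ with $u^4 + v^4 \le (u^2+v^2)^2$. The lower bound follows by first using $(2s+1)(2s+3)v^4 \ge 3v^4$ to reduce the bracket to $3[u^4 - 2(2s+3)u^2v^2 + v^4]$, then using $u^4 + v^4 \ge 2u^2 v^2$ and $u^2 v^2 \le (u^2+v^2)^2/4$ exactly as in Lemma~\ref{lem:4.7}.

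The main obstacle will be the $j=3$ upper bound, whose form involves a maximum rather than a single estimate. Here $P_3(v,u;s) = 2s(2s+2)\,v\,[\,3u^2 - (2s+1)v^2\,]$. A direct triangle-inequality bound gives a \emph{sum} of the two quantities $25\sqrt{5}/72$ and $(2s+1)/8$, not a max. The key is to split into two cases according to the sign of $3u^2 - (2s+1)v^2$. In the case $(2s+1)v^2 \le 3u^2$, the factor in brackets is nonnegative and at most $3u^2$, so $|P_3(v,u;s)|/(u^2+v^2)^3 \le 6s(2s+2)\,|v|u^2/(u^2+v^2)^3$; setting $v^2 = \rho u^2$, the function $\sqrt{\rho}/(1+\rho)^3$ attains its maximum $25\sqrt{5}/216$ at $\rho = 1/5$, producing the bound $(2s(2s+2)/\gamma^3)(25\sqrt{5}/72)$. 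In the complementary case $(2s+1)v^2 > 3u^2$, the bracket has absolute value at most $(2s+1)v^2$, so $|P_3|/(u^2+v^2)^3 \le 2s(2s+1)(2s+2)\,|v|^3/(u^2+v^2)^3$; the function $\rho^{3/2}/(1+\rho)^3$ attains its maximum $1/8$ at $\rho = 1$, giving $(2s(2s+2)/\gamma^3)(2s+1)/8$. Taking the larger of the two case bounds yields the stated $\max$.

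In all of the calculus lemmas above, the critical point calculations (derivatives with respect to $\rho = v^2/u^2$, solving a single-variable extremization) are routine but must be carried out to pin down the exact constants; the use of $u \ge \gamma$ enters only at the very end, via $1/u^k \le 1/\gamma^k$, after maximizing over $v$.
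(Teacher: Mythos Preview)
Your proposal is correct and follows essentially the same strategy as the paper: expressing $D_2^j G/G = P_j(v,u;s)/(u^2+v^2)^j$ via Lemma~\ref{lem:4.5} and Remark~\ref{rem:4.6}, then performing elementary one-variable optimizations after writing $v^2 = \rho u^2$. For $j=1,2,4$ your arguments coincide with the paper's almost verbatim.

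The only point worth a remark is the $j=3$ bound. The paper observes that $A(u,v):=D_2^3 G/G$ is odd in $v$, so it suffices to bound $\sup_v A(u,v)$; it then splits according to the sign of $v$. For $v>0$ it drops the negative term $-(2s+1)v^3$ and maximizes $v/(u^2+v^2)^3$ at $v=u/\sqrt{5}$, yielding the $25\sqrt{5}/72$ constant; for $v\le 0$ it drops the nonpositive term $3u^2 v$ and maximizes $[|v|/(u^2+v^2)]^3$ at $|v|=u$, yielding the $(2s+1)/8$ constant. You instead split by the sign of the bracket $3u^2-(2s+1)v^2$ and bound $|A|$ directly, arriving at the same two optimizations (at $\rho=1/5$ and $\rho=1$) and the same constants. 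Your split is arguably a shade more direct, since it avoids invoking the odd symmetry; the paper's split makes the source of the two constants (which term in the bracket is dropped) slightly more transparent. Either way the analysis is the same in substance.
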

\begin{proof}
By Remark~\ref{rem:4.6}, $P_1(v,u;s) = -2sv$, so
\begin{equation*}
\frac{|D_2 G(u,v;s)|}{G(u,v;s)} = \frac{2s |v|}{u^2 + v^2}.
\end{equation*}
The map $w \mapsto w/(u^2+w^2)$ has its maximum on $[0, \infty)$ at $w=u$,
so $(2s|v|/(u^2+v^2) \le s/u \le s/\gamma$; and we obtain the first
inequality in Lemma~\ref{lem:4.8}.  Using Remark~\ref{rem:4.6} again, we see
that
\begin{equation*}
\frac{D_2^2 G(u,v;s)}{G(u,v;s)} = \frac{2s[(2s+1)v^2 - u^2]}{(u^2 + v^2)^2}.
\end{equation*}
It follows that
\begin{equation*}
\frac{D_2^2 G(u,v;s)}{G(u,v;s)} = 2s(2s+1)\frac{|v|^2}{(u^2 + v^2)^2}.
\end{equation*}
The map 
$v \mapsto |v|/(u^2+v^2)$ has its maximum at $|v|=u$, so $[|v|/(u^2+v^2)]^2
\le 1/(4u^2)\le 1/(4 \gamma^2)$, and
\begin{equation*}
\frac{D_2^2 G(u,v;s)}{G(u,v;s)} = \frac{2s(2s+1)}{4\gamma^2}.
\end{equation*}
Similarly, one obtains
\begin{equation*}
\frac{D_2^2 G(u,v;s)}{G(u,v;s)} \ge - \frac{2s u^2}{(u^2 + v^2)^2}
\ge - \frac{2s}{u^2} \ge - \frac{2s}{\gamma^2}.
\end{equation*}

With the  aid of Remark~\ref{rem:4.6} again, we see that
\begin{equation*}
\frac{D_2^3 G(u,v;s)}{G(u,v;s)} = 2s(2s+2)v \frac{[-(2s+1)v^2 + 3u^2]}
{(u^2+v^2)^3} := A(u,v).
\end{equation*}
For a fixed $u \ge \gamma$, $v \mapsto A(u,v)$ is an odd function of
$v$, so if
$\alpha(u) = \sup \{A(u,v): v \in \R\}$, $- \alpha(u)
= \inf\{A(u,v): v \in \R\}$. If $v \le 0$,
\begin{multline*}
A(u,v) \le (2s)(2s+1)(2s+2) \left[\frac{|v|}{u^2+v^2}\right]^3
\le (2s)(2s+1)(2s+2) \left[\frac{u}{2u^2}\right]^3
\\
\le \frac{(2s)(2s+1)(2s+2)}{8 \gamma^3}.
\end{multline*}
If $v >0$,
\begin{equation*}
A(u,v) \le (2s)(2s+2)(3u^2) \frac{v}{(u^2+v^2)^3}.
\end{equation*}
A calculation shows that $v \mapsto v/(u^2+v^2)^3$ achieves its maximum
for $v \ge 0$ at $v = u/\sqrt{5}$, so for $v >0$,
\begin{equation*}
A(u,v) \le (2s)(2s+2)(3u^{-3}) [\sqrt{5}(6/5)^3]^{-1}
\le (2s)(2s+2) \gamma^{-3} (25\sqrt{5}/72).
\end{equation*}
Note that $25\sqrt{5}/72 \approx .7764 < 1$.
Using Remark~\ref{rem:4.6} again, we see that
\begin{equation*}
\frac{D_2^4 G(u,v;s)}{G(u,v;s)} = 2s(2s+2) \frac{[(2s+1)(2s+3)v^4
-6(2s+3) u^2 v^2 + 3 u^4]}{(u^2+v^2)^4}.
\end{equation*}
Since $u^4 + v^4 \le (u^2+v^2)^2$, it follows easily that
\begin{equation*}
\frac{D_2^4 G(u,v;s)}{G(u,v;s)} \le 2s(2s+2)(2s+1)(2s+3) \frac{u^4 + v^4}
{(u^2+v^2)^4} \le 2s(2s+2)(2s+1)(2s+3) \gamma^{-4}.
\end{equation*}

Similarly, we see that
\begin{multline*}
(2s+1)(2s+3)v^4- 6(2s+3) u^2v^2+ 3 u^4 \ge
3(u^4+v^4) - 6(2s+3)[(u^2+v^2)/2]^2
\\
\ge 3 (u^2+v^2)^2 - 6 [(u^2+v^2)/2]^2 -6(2s+3) [(u^2+v^2)/2]^2.
\end{multline*}
This implies that
\begin{multline*}
\frac{D_2^4 G(u,v;s)}{G(u,v;s)} \ge 2s(2s+2) \frac{3-3/2-3/2(2s+3)}
{(u^2+v^2)^2}
\\
\ge -(2s)(2s+2)3(s+1)(u^2+v^2)^{-2} 
\ge -(2s)(2s+2)(3s+3)\gamma^{-4},
\end{multline*}
which completes the proof of Lemma~\ref{lem:4.8}. Note that 
$(2s)(2s+1)(2s+2)(2s+3) \ge 2s(2s+2)(3s+3)$.
\end{proof}
\begin{remark}
\label{rem:4.9}
Lemmas~\ref{lem:4.7} and \ref{lem:4.8} show that whenever $u \ge \gamma >0$,
$s >0$, $k=1$ or $k=2$, and $1 \le j \le 4$,
\begin{equation*}
\frac{|D_k^j G(u,v;s)|}{G(u,v;s)} \le (2s)(2s+1)\cdots (2s+j-1) \gamma^{-j}.
\end{equation*}
We have not determined whether the above inequality holds for all $j \ge 1$.
\end{remark}

Using Lemmas~\ref{lem:4.7} and \ref{lem:4.8}, we can give uniform estimates
for the quantities $(\partial/\partial x)^j v_s(x,y)/v_s(x,y)$ and
$(\partial/\partial y)^j v_s(x,y)/v_s(x,y)$, where $s >0$, $1 \le j \le 4$,
and $v_s(x,y)$ is the unique (to within normalization) strictly positive
eigenfunction of the linear operator $\Lambda_s: C^m_{\C}(\bar H) \to
C^m_{\C}(\bar H)$ in \eqref{4.2} for $m \ge 1$.

\begin{thm}
\label{thm:4.10}
Let $s$ denote a positive real and let $\B$ and $\theta_b$, $b \in \B$, be as in
(H5.1). Let $H$
be a bounded, mildly regular open subset of $\C:= \R^2$ such that $H
\supset G_{\gamma}=\{z\in \C : |z - 1/(2 \gamma)| < 1/(2
\gamma)\}$, and $\Re(z) >0$ for all $z \in H$, so $\theta_{\be}(H)
  \subset G_{\gamma}$ for all $\be \in \B$. For a positive integer
$m$, define a complex Banach space $C^m_{\C}(\bar H) = X$ and 
let $\Lambda_s: X \to X$ be defined as in \eqref{4.2}.
Then $\Lambda_s$ has a unique (to within normalization) strictly positive
eigenfunction $v_s \in X$ and $v_s \in C^{\infty}$.  Furthermore,
we have the following estimates for $(x,y) \in \bar H$.
\begin{gather}
\label{4.16}
-\frac{2s}{\gamma} \le \frac{\partial v_s(x,y)}{\partial x}
[v_s(x,y)]^{-1} \le 0,
\\
\label{4.17}
-\frac{s}{4\gamma^2(s+1)} \le \frac{\partial^2 v_s(x,y)}{\partial x^2}
[v_s(x,y)]^{-1} \le \frac{2s(2s+1)}{\gamma^2},
\\
\label{4.18}
-\frac{2s(2s+1)(2s+2)}{\gamma^3} \le \frac{\partial^3 v_s(x,y)}{\partial
x^3} [v_s(x,y)]^{-1} \le \frac{(2s)(2s+2)}{\gamma^3(s+2)^2},
\\
\label{4.19}
-\frac{2s(2s+2)(3s+3)}{\gamma^4} \le
\frac{\partial^4 v_s(x,y)}{\partial x^4} [v_s(x,y)]^{-1}
\le \frac{(2s)(2s+1)(2s+2)(2s+3)}{\gamma^4},
\\
\label{4.20}
\Big| \frac{\partial v_s(x,y)}{\partial y}\Big| [v_s(x,y)]^{-1}
\le \frac{s}{\gamma},
\\
\label{4.21}
-\frac{2s}{\gamma^2} \le \frac{\partial^2 v_s(x,y)}{\partial y^2}
[v_s(x,y)]^{-1} \le \frac{2s(2s+1)}{4\gamma^2},
\\
\label{4.22}
\Big|\frac{\partial^3 v_s(x,y)}{\partial y^3}\Big| [v_s(x,y)]^{-1} \le 
\frac{(2s)(2s+2)}{\gamma^3} \max\{25\sqrt{5}/72, (2s+1)/8\},
\\
\label{4.23}
-\frac{2s(2s+2)(3s+3)}{\gamma^4} \le
\frac{\partial^4 v_s(x,y)}{\partial y^4}[v_s(x,y)]^{-1}
\le \frac{(2s)(2s+1)(2s+2)(2s+3)}{\gamma^4}.
\end{gather}
Hence, if $D_1 = \partial/\partial x$ and $D_2 = \partial/\partial y$, we have
for $k=1,2$ and $1 \le j \le 4$ that
\begin{equation}
\label{4.24}
\frac{|D_k^j v_s(x,y)|}{v_s(x,y)}
 \le \frac{(2s)(2s+1) \cdots (2s+j-1)}{\gamma^j}.
\end{equation}
\end{thm}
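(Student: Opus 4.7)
The plan is to pass the pointwise bounds of Lemmas~\ref{lem:4.7} and \ref{lem:4.8} through the sum in $\Lambda_s^\nu$ and then through the limit \eqref{4.11}. The main structural observation is that, by Lemma~\ref{lem:4.3}, each summand of $\sum_{\w\in\B_\nu}|d\theta_\w(z)/dz|^s$ can be written (after reindexing $\w\mapsto\tilde\w$, which is just a bijection of $\B_\nu$) as $c_\w\,G(x+a_\w,\,y+b_\w;s)$, where $c_\w=|B_{n-1,\w}|^{-2s}\ge 0$ depends only on $\w$ (not on $z$) and $a_\w+\irm b_\w:=B_{n,\w}/B_{n-1,\w}$ satisfies $a_\w\ge\gamma$ by \eqref{4.14}. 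Since $z=x+\irm y\in\bar H$ forces $x\ge 0$, we have $x+a_\w\ge\gamma$, which is exactly the hypothesis needed to invoke the uniform bounds of Lemmas~\ref{lem:4.7} and \ref{lem:4.8}.

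First I would fix $k\in\{1,2\}$ and an integer $j$ with $1\le j\le 4$. Since $D_k^j$ differentiates only in $z$ and does not touch the constants $c_\w$,
\begin{equation*}
D_k^j\bigl[c_\w\,G(x+a_\w,y+b_\w;s)\bigr]
= c_\w\,\bigl(D_k^j G\bigr)(x+a_\w,y+b_\w;s).
\end{equation*}
Let $U_{k,j}$ and $L_{k,j}$ denote the explicit upper and lower bounds supplied by Lemmas~\ref{lem:4.7} and \ref{lem:4.8} for $D_k^j G(u,v;s)/G(u,v;s)$ over $\{u\ge\gamma,\,v\in\R\}$. Then for each $\w$,
\begin{equation*}
L_{k,j}\,c_\w G(x+a_\w,y+b_\w;s)
\le c_\w (D_k^j G)(x+a_\w,y+b_\w;s)
\le U_{k,j}\,c_\w G(x+a_\w,y+b_\w;s).
\end{equation*}
Summing over $\w\in\B_\nu$ and writing $N_\nu(z):=\sum_{\w\in\B_\nu}|d\theta_\w(z)/dz|^s$, I obtain
\begin{equation*}
L_{k,j}\,N_\nu(z)\;\le\;D_k^j N_\nu(z)\;\le\;U_{k,j}\,N_\nu(z),
\qquad z\in\bar H,\ \nu\ge 1.
\end{equation*}

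Next I would divide by $N_\nu(z)>0$ and pass to the limit $\nu\to\infty$. By \eqref{4.11} (with multi-index $\alpha=j e_k$), the ratio $D_k^j N_\nu / N_\nu$ converges uniformly on $\bar H$ to $D_k^j v_s/v_s$, so the uniform bounds $L_{k,j}$ and $U_{k,j}$ are inherited in the limit:
\begin{equation*}
L_{k,j}\;\le\;\frac{D_k^j v_s(x,y)}{v_s(x,y)}\;\le\;U_{k,j},
\qquad (x,y)\in\bar H.
\end{equation*}
Inserting the explicit constants for $k=1$ from Lemma~\ref{lem:4.7} yields \eqref{4.16}--\eqref{4.19}; inserting those for $k=2$ from Lemma~\ref{lem:4.8} yields \eqref{4.20}--\eqref{4.23}; and \eqref{4.24} is the combined absolute bound recorded in Remark~\ref{rem:4.9}. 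The $C^\infty$ regularity of $v_s$ follows from Theorem~\ref{thm:4.2} applied with arbitrarily large $m$ (the mappings $\theta_b$ and weights $g_b(z)=|z+b|^{-2s}$ are smooth away from the poles, and $\bar H$ is bounded away from all $-b$), so the derivatives in the statement are legitimate.

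The main technical point — really the only nontrivial one — is ensuring that the argument $(u,v)=(x+a_\w,y+b_\w)$ lies in the region $\{u\ge\gamma\}$ on which the pointwise bounds of Lemmas~\ref{lem:4.7}--\ref{lem:4.8} hold; this is exactly what \eqref{4.14} in Lemma~\ref{lem:4.3} guarantees together with the assumption $\Re(z)\ge 0$ on $\bar H$. Beyond that, the derivation is mechanical: the uniform bounds on the scalar function $G(\cdot,\cdot;s)$ transfer to $N_\nu$ summand by summand because the coefficients $c_\w$ are nonnegative, and then to $v_s$ because the convergence in \eqref{4.11} is uniform.
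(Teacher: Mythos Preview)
Your proof is correct and follows essentially the same route as the paper: rewrite each summand of $\Lambda_s^\nu(1)$ via Lemma~\ref{lem:4.3} as $c_\w\,G(x+a_\w,y+b_\w;s)$ with $a_\w\ge\gamma$, apply the pointwise bounds of Lemmas~\ref{lem:4.7}--\ref{lem:4.8} termwise (nonnegative weights preserve the inequalities under summation), and pass to the limit using \eqref{4.11}; the $C^\infty$ claim is handled identically by invoking the spectral result for arbitrary $m$.
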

\begin{proof}
For any integer $m \ge 1$, we can view $\Lambda_s$ as a bounded linear
operator of $C^m_{\C}(\bar H)$ to $C^m_{\C}(\bar H)$.  We know that $\Lambda_s$ has a
strictly positive eigenfunction $v_s(x,y) \in C_{\C}^m(\bar H)$ such that
$\sup\{v_s(x,y): (x,y) \in\bar H\} =1$.  By the uniqueness of this
eigenfunction, $v_s(x,y)$ must actually be $C^{\infty}$.

Using the notation of \eqref{4.6} and \eqref{4.7} and also using \eqref{4.15}
in Lemma~\ref{lem:4.3}, we see that
\begin{equation*}
\Big|\frac{d}{dz} \theta_{\tilde \omega}(z)\Big|^s = |B_{n-1}|^{-2s}
|z + B_n/B_{n-1}|^{-2s}.
\end{equation*}
By Lemma~\ref{lem:4.3}, $\Re(B_n/B_{n-1}) \ge \gamma_{\omega} \ge \gamma$, so
writing $\Im(B_n/B_{n-1}) = \delta_{\omega}$, we obtain that for $k=1,2$ and
$1 \le j$,
\begin{multline}
\label{4.25}
D_k^j\left(\Big|\frac{d}{dz} \theta_{\tilde \omega}(z)\Big|^s\right) 
\Big|\frac{d}{dz} \theta_{\tilde \omega}(z)\Big|^s
\\
= \Big(D_k^j\Big[(x+ \gamma_{\omega})^2 + (y+ \delta_{\omega})^2\Big]^{-s}\Big)
\Big[(x+ \gamma_{\omega})^2 + (y+ \delta_{\omega})^2\Big]^{s}.
\end{multline}
However, if we write $(x+ \gamma_{\omega}) = u \ge \gamma$ and
$(y+ \delta_{\omega}) =v$, we see that
\begin{multline}
\label{4.26}
\left(\Big(\frac{\partial}{\partial x}\Big)^j
\Big[(x+ \gamma_{\omega})^2 + (y+ \delta_{\omega})^2\Big]^{-s}\right)
\Big[(x+ \gamma_{\omega})^2 + (y+ \delta_{\omega})^2\Big]^{-s}\
\\
= \left[\Big(\frac{\partial}{\partial u}\Big)^jG(u,v;s)\right]
\left[G(u,v;s\right]^{-1},
\end{multline}
where the right hand side of the above equation is evaluated at 
$u = x+ \gamma_{\omega}$ and $v = y+ \delta_{\omega}$.  If we combine
\eqref{4.25} and \eqref{4.26} with the estimates in Lemma~\ref{lem:4.7}
and if we then use \eqref{4.11}, we obtain the estimates on
$(\partial/\partial x)^j v_s(x,y)$ given in \eqref{4.16} - \eqref{4.19}.

Similarly, we have
\begin{multline}
\label{4.27}
\left(\Big(\frac{\partial}{\partial y}\Big)^j
\Big[(x+ \gamma_{\omega})^2 + (y+ \delta_{\omega})^2\Big]^{-s}\right)
\Big[(x+ \gamma_{\omega})^2 + (y+ \delta_{\omega})^2\Big]^{-s}\
\\
= \left[\Big(\frac{\partial}{\partial v}\Big)^jG(u,v;s)\right]
\left[G(u,v;s\right]^{-1}.
\end{multline}
If we combine \eqref{4.25} and \eqref{4.27} with the estimates in
Lemma~\ref{lem:4.8} and if we then use \eqref{4.11}, we obtain the
estimates on $(\partial/\partial y)^j v_s(x,y)$ given in \eqref{4.20}
- \eqref{4.23}.
\end{proof}

\begin{remark}
\label{rem:8.3}
Let $H$, $\B$, and $\theta_b$, $b \in \B$, be as in
Theorem~\ref{thm:4.10} and let $R$ and $\alpha$ be positive reals such
that $R \ge \sup\{|b|, b \in \B\}$.  Define $\theta_0: \bar H \to \bar
H$ by $\theta_0(z) =0$ for all $z \in \bar H$ and let $L_{s,R,
  \alpha}:X:=C^m(\bar H) \to C^m(\bar H)$ be as in \eqref{Lsdefza} in
Section~\ref{sec:2dexp}.  Notice that $L_{s,R, \alpha}$ satisfies all
the hypotheses of Theorem~\ref{thm:1.1}, so all the conclusions of
Theorem~\ref{thm:1.1} hold. In particular, $L_{s,R, \alpha}$ has a
unique (to within normalization) strictly positive eigenfunction $w_s
\in C^m(\bar H)$.  Because the eigenfunction $w_s$ is unique and $m
\ge 1$ is arbitrary, $w_s \in C^m(\bar H)$ for all $m \ge 1$.

We claim that exactly the same estimates given for $v_s$ in
Theorem~\ref{thm:4.10} (i.e., \eqref{4.16} -- \eqref{4.24}) also hold
for $w_s$. To see this, define an index set $\D = \B \cup \{0\}$ and
for $z \in \bar H$, define $g_{\delta}(z) = 1/|z+ \be|^{2s}$ if
$\delta = \be \in \B$ and $g_{\delta}(z) = \alpha$ if $\delta = 0$. As
usual, if $\mu$ is a positive integer, let
\begin{equation*}
\D_{\mu} = \{ \omega = (\delta_1, \delta_2, \ldots, \delta_{\mu}) :
\delta_k \in \D \text{ for } 1 \le k \le \mu\}.  
\end{equation*}
Recall that for $\omega = (\delta_1, \delta_2, \ldots, \delta_{\mu}) \in \D_{\mu}$
and $\tilde \omega$ as in \eqref{4.6}, our convention is that
$\theta_{\tilde \omega} = \theta_{\delta_1} \circ \theta_{\delta_2}
\circ \cdots \circ \theta_{\delta_{\mu}}$ and
\begin{multline*}
g_{\tilde \omega}(z) 
\\
= g_{\delta_{\mu}}(\theta_{\delta_{\mu-1}} \circ \theta_{\delta_{\mu -2}}
\circ \cdots \circ \theta_{\delta_1}(z))
g_{\delta_{\mu-1}}(\theta_{\delta_{\mu-2}} \circ \theta_{\delta_{\mu -3}}
\circ \cdots \circ \theta_{\delta_1}(z)) \cdots
g_{\delta_2}(\theta_{\delta_1}(z)) g_{\delta_1}(z).
\end{multline*}
If $D_1 = \partial/\partial x$ and $D_2= \partial/\partial y$,
for $k \ge 1$, $p=1$ or $2$, and $z = x+ iy:=(x,y)$, we know that
\begin{equation*}
\frac{D_p^k w_s(x,y)}{w_s(x,y)} = \lim_{\mu \rightarrow \infty}
\frac{D_p^k \Big(\sum_{\w \in \D_{\mu}} g_{\tilde \w}(x,y)\Big)}{\sum_{\w \in \D_{\mu}}
  g_{\tilde \w}(x,y)}.
\end{equation*}
If $\w = (\delta_1, \delta_2, \ldots, \delta_{\mu}) \in \D_{\mu}$ and
$\delta_k \neq 0$ for $1 \le k \le \mu$, we have seen in
Lemmas~\ref{lem:4.7} and \ref{lem:4.8} that $D_p^k g_{\tilde
  \w}(x,y)/g_{\tilde \w}(x,y)$ satisfies the same estimates given for
$D_p^k v_s(x,y)/v_s(x,y)$ in equations \eqref{4.16}- \eqref{4.27}.
Thus assume that $\delta_t =0$ for some $t$, $1 \le t \le \mu$ and
$\delta_{t^{\prime}} \neq 0$ for $1 \le t^{\prime} < t$.  A little
thought shows that if $t=1$, $g_{\tilde \w}(z)$ is a positive
constant.  If $t=2$, $g_{\tilde \w}(z) = c(\w) g_{\delta_1}(z)$, where
$c(\w)$ is a positive constant.  Generally, if $2 \le t \le \mu$,
$g_{\tilde \w}(z) = c(\w)g_{\tilde \w_{t-1}}(z),$ where $c(\w)$ is a positive
constant and $\w_{t-1} = (\delta_1, \delta_2, \ldots, \delta_{t-1})
\in \D_{t-1}$ and $\delta_1, \delta_2, \ldots, \delta_{t-1} \in \B$.
It follows that $D_p^k g_{\tilde \w}(x,y)/g_{\tilde \w}(x,y) =0$ if $t=1$ and
otherwise
\begin{equation*}
D_p^k g_{\tilde \w}(x,y)/g_{\tilde \w}(x,y) 
= D_p^k g_{\tilde \w_{t-1}}(x,y)/g_{\tilde\w_{t-1}}(x,y).
\end{equation*}
By using Lemmas~\ref{lem:4.7} and \ref{lem:4.8} again, it follows
that if $\delta_t =0$ for some $t$, $1 \le t \le \mu$,
$D_p^k g_{\tilde \w}(x,y)/g_{\tilde \w}(x,y)$ is identically zero or 
satisfies the same estimates given for $v_s$ in Theorem~\ref{thm:4.10}.
Thus we see that $D_p^k w_s(x,y)/w_s(x,y)$
satisfies the same estimates given for $D_p^k v_s(x,y)/v_s(x,y)$
in Theorem~\ref{thm:4.10}.
\end{remark}

\begin{cor}
\label{cor:5.9}
Let notation and hypotheses be as in Remark~\ref{rem:8.3}.  Then $w_s$
satisfies inequalities \eqref{wrelation}--\eqref{Dyybound} in
Section~\ref{sec:2dexp}. If $\B$ and $H$ are symmetric under conjugation,
$w_s(\bar z) = w_s(z)$ for all $z \in \bar H$.
\end{cor}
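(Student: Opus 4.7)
The estimates on unmixed second partials, \eqref{Dxxbound} and \eqref{Dyybound}, are essentially already done. Remark~\ref{rem:8.3} asserts that $w_s$ satisfies the same bounds on $D_k^j w_s/w_s$ ($k=1,2$, $1 \le j \le 4$) as $v_s$ does in Theorem~\ref{thm:4.10}; multiplying the bounds \eqref{4.17} and \eqref{4.21} (applied to $w_s$) through by $w_s(x,y) > 0$ yields \eqref{Dxxbound} and \eqref{Dyybound} directly.

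The three exponential inequalities \eqref{wrelation}--\eqref{wrelation3} come from integrating the first-order analogues of \eqref{4.16} and \eqref{4.20} for $w_s$, which Remark~\ref{rem:8.3} again supplies. For \eqref{wrelation2}, parametrize the horizontal segment from $(x_1,y)$ to $(x_2,y)$ by $\psi(t) = (x_1 + t(x_2-x_1), y)$ and write $\log w_s(x_2,y) - \log w_s(x_1,y) = \int_0^1 (D_1 w_s/w_s)(\psi(t))(x_2-x_1)\,dt$. Since $(D_1 w_s/w_s) \in [-2s/\gamma, 0]$, the integral lies in $[-2s(x_2-x_1)/\gamma, 0]$, and exponentiation yields \eqref{wrelation2}. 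Inequality \eqref{wrelation3} follows in the same way along a vertical segment, using $|D_2 w_s/w_s| \le s/\gamma$. For \eqref{wrelation}, parametrize the segment from $z_1$ to $z_0$; by Cauchy--Schwarz the gradient of $\log w_s$ has Euclidean norm at most $\sqrt{(2s/\gamma)^2 + (s/\gamma)^2} = \sqrt{5}\,s/\gamma$, and integration gives the stated bound. The only minor obstacle here is ensuring the straight segments lie in $\bar U$; in the setup of Section~\ref{sec:2dexp} this holds by construction, and if not, one would appeal to the mild regularity of $U$ and absorb a multiplicative constant $M$ in the exponents.

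For the conjugation symmetry, assume $\B$ and $U$ are symmetric under conjugation and set $\tilde w_s(z) := w_s(\bar z)$, which is a continuous strictly positive function on $\bar U$. Using $\overline{\theta_\beta(z)} = \theta_{\bar\beta}(\bar z)$, $|z+\beta| = |\bar z + \bar\beta|$, and reindexing $\beta \mapsto \bar\beta$ in the finite sum defining $L_{s,R,\alpha}$ (valid because $\bar{\B} = \B$), one verifies that $(L_{s,R,\alpha}\tilde w_s)(z) = (L_{s,R,\alpha} w_s)(\bar z) = r(L_{s,R,\alpha})\, \tilde w_s(z)$. By the uniqueness of strictly positive continuous eigenfunctions asserted in Theorem~\ref{thm:thm3.1}, $\tilde w_s = c w_s$ for some $c > 0$; applying the same construction to $\tilde w_s$ in place of $w_s$ forces $c^2 = 1$, hence $c = 1$ and $w_s(\bar z) = w_s(z)$ for every $z \in \bar U$.
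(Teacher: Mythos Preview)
Your proposal is correct and follows essentially the same route as the paper: derive \eqref{Dxxbound}--\eqref{Dyybound} directly from Remark~\ref{rem:8.3}, obtain \eqref{wrelation}--\eqref{wrelation3} by integrating the first-order logarithmic derivative bounds along segments, and prove the conjugation symmetry by checking that $\tilde w_s(z):=w_s(\bar z)$ is again a strictly positive eigenfunction and invoking uniqueness.

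One point deserves comment. Your treatment of the segment-in-domain issue is looser than the paper's. You suggest that if the straight segment leaves $\bar U$ one could fall back on mild regularity and ``absorb a multiplicative constant $M$'' in the exponents; but that would \emph{not} give \eqref{wrelation}--\eqref{wrelation3} as stated, since those inequalities carry the specific constants $\sqrt{5}s/\gamma$, $2s/\gamma$, $s/\gamma$ with no extra factor. The paper handles this cleanly by first enlarging $H$ to a convex bounded open set $H_1$ with $\Re(z)>0$, observing that $L_{s,R,\alpha}$ still acts on $C^m(\bar H_1)$ and has a unique normalized positive eigenfunction there, and using uniqueness to identify that eigenfunction with $w_s$ on $\bar H$. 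After this reduction all segments lie in $\bar H_1$ and the integrations go through with the exact constants. Your involution argument $c^2=1$ for the symmetry part is a valid alternative to the paper's appeal to the \emph{normalized} eigenfunction.
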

\begin{proof}
  Let $H_1 \supset H$ be a convex, bounded open set such that $\Re(z)
  >0$ for all $z \in H_1$.  For $z \in \bar H_1$ and $L_{s, R,
    \alpha}$ given by \eqref{Lsdefza}, we can also view $L_{s, R,
    \alpha}$ as a bounded linear operator from $C^m_{\C}(\bar H_1) \to
  C^m_{\C}(\bar H_1)$, and this bounded linear operator has a unique strictly
  positive normalized eigenfunction $\hat w_s \in C^m_{\C}(\bar H_1)$.
  Uniqueness implies that $\hat w_s(z) = w_s(z)$ for all $z \in \bar
  H$.  Thus, after replacing $H$ by $H_1$, we can assume that $H$ is
  convex.

If $(x_1,y)$ and $(x_2,y) \in \bar H$ and $x_1 < x_2$, we obtain from
\eqref{4.16} that
\begin{equation*}
- \frac{2s}{\gamma}(x_2-x_1) \le \int_{x_1}^{x_2} \frac{\partial}{\partial x}
\log w_s(x,y) \, dx = \log \Big(\frac{w_s(x_2,y)}{w_s(x_1,y)}\Big) \le 0,
\end{equation*}
which gives \eqref{wrelation2}.  If $(x_1,y)$ and $(x_2,y) \in \bar H$ and 
$y_1 < y_2$, we obtain from \eqref{4.20} that
\begin{equation*}
- \frac{s}{\gamma}(y_2-y_1) \le \int_{y_1}^{y_2} \frac{\partial}{\partial y}
\log w_s(x,y) \, dy \le \frac{s}{\gamma}(y_2- y_1),
\end{equation*}
which gives \eqref{wrelation3}.  For $z_0$ and $z_1 \in H$,
define $z_t = (1-t)z_0 + t z_1$ and note that
\begin{multline*}
\Big|\int_0^1 \frac{d}{dt} \log(w_s(z_t)) \, dt \Big| = \Big|\log\Big(
\frac{w_s(z_1)}{w_s(z_0)}\Big)\Big|
\\
\le \int_0^1\Big|\frac{D_1 w_s(z_t)}{w_s(z_t)}(x_1-x_0)
+ \frac{D_2w_s(z_t)}{w_s(z_t)} (y_1 - y_0) \Big| \, dt,
\end{multline*}
where $z_j = (x_j,y_j)$, $j =0,1$.
Using \eqref{4.16} and \eqref{4.20}, we obtain
\begin{equation*}
\Big|\log\Big(
\frac{w_s(z_1)}{w_s(z_0)}\Big)\Big| \le 
\int_0^1 \Big| \frac{2s}{\gamma}|x_1-x_0| + \frac{s}{\gamma}|y_1 - y_0|
\Big| \, dt \le \frac{\sqrt{5} s}{\gamma} \sqrt{(x_1-x_0)^2 + (y_1 - y_0)^2},
\end{equation*}
which shows that $w_s$ satisfies \eqref{wrelation}.
Combining Remark~\ref{rem:8.3} and Corollary~\ref{cor:5.9}, we see that $w_s$
in Corollary~\ref{cor:5.9} satisfies \eqref{wrelation}--\eqref{Dyybound}.
It remains to verify the final statement in Corollary~\ref{cor:5.9}.
If $\lambda_s = r(L_{s,R,\alpha}) >0$, we know that $w_s$ is the unique normalized,
strictly positive eigenfunction of $L_{s,R,\alpha}$ with eigenvalue $\lambda_s$. Hence,
\begin{multline*}
\lambda_s w_s(\bar z) = \sum_{\be \in \B} \frac{1}{|\bar z +b|^{2s}}
w_s(1/(\bar z +b)) + \alpha w_s(0)
\\
= \sum_{\be \in \B} \frac{1}{|\bar z + \bar b|^{2s}}
w_s(1/(\bar z +\bar b)) + \alpha w_s(0).
\end{multline*}
If we define $\tilde w_s(z) = w_s(\bar z)$ for all $z \in \bar H$, the above
calculation shows
\begin{equation*}
\lambda_s \tilde w_s(z) = \sum_{\be \in \B} \frac{1}{|z +b|^{2s}}
\tilde w_s(\theta_b(z))+ \alpha \tilde w_s(0)
= \sum_{\be \in \B} \frac{1}{|z + b|^{2s}}
\tilde w_s(\theta_b(z)) + \alpha \tilde w_s(0).
\end{equation*}
By uniqueness of the strictly positive normalized eigenfunction,
this implies that $\tilde w_s = w_s$, so
$w_s(z) = w_s(\bar z)$ for all $z \in H$.
\end{proof}
It remains to consider the case that $\B$ in Theorem~\ref{thm:4.10} is
countably infinite and that $s >0$ is such that $\sum_{b \in \B} (1/|b|^{2s})
< \infty$.

\begin{thm}
\label{thm:thm5.10}
Let $\B$ be a countably infinite set such that $\B \subseteq \{z \in \C:
\Re(z) \ge \gamma \ge 1\}$.  Assume that $s >0$ is such that $\sum_{b \in \B}
(1/|b|^{2s}) < \infty$. Let $H$ and $G_{\gamma}$ be as in
Theorem~\ref{thm:4.10}. As was noted in Section~\ref{sec:2dexp} (see also
Section 5 in \cite{A} and \cite{N-P-L}), $L_s: C_{\C}(\bar H) \to
C_{\C}(\bar H)$ defines a bounded linear map, where $L_s$ is defined by
\eqref{Lsdefz}, and $L_s$ has a unique (to within scalar multiples)
strictly positive Lipschitz eigenfunction $v_s$ which satisfies inequalities
\eqref{wrelation}--\eqref{wrelation3} on $\bar H$.  If $\B$ and $H$ are
symmetric under conjugation, $v_s(\bar z) = v_s(z)$ for all $z \in \bar H$.
\end{thm}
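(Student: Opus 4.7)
The plan is to obtain $v_s$ as a uniform limit of the finite-IFS eigenfunctions produced by Theorem~\ref{thm:4.2} and Corollary~\ref{cor:5.9}. For each real $R > 0$ with $\B_R := \{b \in \B : |b| \le R\}$ nonempty, let $w_s^{(R)}$ denote the unique strictly positive eigenfunction of $L_{s,R,0}$ in $C_{\C}(\bar H)$ supplied by Theorem~\ref{thm:4.2} (applied to the finite set $\B_R$), normalized by $\max_{\bar H} w_s^{(R)} = 1$, and let $\lambda_s^{(R)} := r(L_{s,R,0}) > 0$. By Remark~\ref{rem:8.3} and Corollary~\ref{cor:5.9}, each $w_s^{(R)}$ satisfies \eqref{wrelation}--\eqref{wrelation3} on $\bar H$, with constants independent of $R$. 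Moreover, since $L_{s,R,0}(f) \le L_s(f)$ for every $f \in K$ and $L_{s,R,0} \le L_{s,R',0}$ when $R \le R'$, Lemma~\ref{cor-nonneg} gives $\lambda_s^{(R)} \le \lambda_s^{(R')} \le r(L_s) < \infty$, so $\lambda_s^{(R)} \nearrow \lambda_s$ for some $\lambda_s \le r(L_s)$.

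Next I would extract a limit. Estimate \eqref{wrelation} together with $\|w_s^{(R)}\|_\infty = 1$ makes the family $\{w_s^{(R)}\}$ uniformly Lipschitz on $\bar H$: if $z_0^{(R)}$ is a maximizer, then $1 = w_s^{(R)}(z_0^{(R)}) \le w_s^{(R)}(z) \exp((\sqrt{5}s/\gamma)\diam(\bar H))$, so each $w_s^{(R)}$ is uniformly bounded below by a positive constant, and the inequality $|w_s^{(R)}(z_0) - w_s^{(R)}(z_1)| \le \|w_s^{(R)}\|_\infty (\exp(C|z_0-z_1|) - 1)$ with $C = \sqrt{5}s/\gamma$ yields a uniform Lipschitz bound. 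Arzelà--Ascoli produces a subsequence $w_s^{(R_n)} \to v_s$ uniformly on $\bar H$; the limit inherits \eqref{wrelation}--\eqref{wrelation3} and the uniform lower bound, hence $v_s$ is strictly positive and Lipschitz.

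To pass to the eigenvalue equation, fix $z \in \bar H$ and write
\begin{equation*}
\lambda_s^{(R_n)} w_s^{(R_n)}(z) = \sum_{b \in \B_{R_n}} \frac{w_s^{(R_n)}(\theta_b(z))}{|z+b|^{2s}}.
\end{equation*}
Since $|w_s^{(R_n)}(\theta_b(z))| \le 1$ and $\sum_{b \in \B} 1/|z+b|^{2s} < \infty$ (from the convergence of $\sum_b 1/|b|^{2s}$; compare the discussion preceding Theorem~\ref{thm:thm3.1}), dominated convergence combined with the uniform convergence $w_s^{(R_n)} \to v_s$ gives $(L_s v_s)(z) = \lambda_s v_s(z)$. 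Thus $v_s$ is a strictly positive Lipschitz eigenfunction of $L_s$ with eigenvalue $\lambda_s$; by Lemma~\ref{lem:nonneg} (applied with $w = v_s$ and with $v = v_s$), $\lambda_s = r(L_s)$.

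For the conjugation symmetry, when $\B$ and $H$ are symmetric under conjugation, each $\B_R$ is symmetric, so the uniqueness part of Theorem~\ref{thm:4.2} (together with the argument at the end of the proof of Corollary~\ref{cor:5.9}) forces $w_s^{(R)}(\bar z) = w_s^{(R)}(z)$; this symmetry passes to $v_s$.

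The hard step is \emph{uniqueness up to scalar multiples}, which does not follow directly from Theorem~\ref{thm:1.1} since $\B$ is infinite. My plan is to argue as follows: suppose $\tilde v_s$ is another strictly positive continuous eigenfunction of $L_s$. Applying Lemma~\ref{lem:nonneg} twice (using that both $v_s$ and $\tilde v_s$ lie in $\interior(K)$ because they are strictly positive and continuous on the compact set $\bar H$) forces both eigenvalues to equal $r(L_s) = \lambda_s$. Setting $\kappa = \min_{\bar H}(\tilde v_s / v_s) > 0$, the function $\phi := \tilde v_s - \kappa v_s$ is nonnegative, satisfies $L_s \phi = \lambda_s \phi$, and vanishes at some point $z_* \in \bar H$; expanding the identity $(L_s \phi)(z_*) = 0$ forces $\phi(\theta_b(z_*)) = 0$ for every $b \in \B$, and iterating shows $\phi$ vanishes on the forward orbit $\{\theta_{\w}(z_*) : \w \in \cup_\mu \B_\mu\}$. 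By the contraction estimate \eqref{1.8} applied to $\theta_b \circ \theta_c$ (Lemma~\ref{lem:4.1}) this orbit accumulates on the invariant set $C \subset \bar G_\gamma \subset \bar H$, and continuity of $\phi$ together with density of the orbit near any point of $C$ yields $\phi \equiv 0$ on $C$; a final Harnack-type argument using \eqref{wrelation} (analogous to its use in proving strict positivity of $v_s$) propagates the vanishing to all of $\bar H$. For this last irreducibility step I expect to rely on Lemma~5.3 of \cite{N-P-L} and the results of Section~5 of \cite{A}, which were explicitly cited for exactly this purpose at the end of Section~\ref{sec:exist}.
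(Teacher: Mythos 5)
Your existence argument reproduces the paper's proof almost line for line: you truncate $\B$ to $\B_R$, invoke the finite-IFS results (the paper cites Theorem~\ref{thm:4.10}, you route through Theorem~\ref{thm:4.2}, Remark~\ref{rem:8.3}, and Corollary~\ref{cor:5.9}, which amounts to the same derivative estimates), use \eqref{wrelation} together with the normalization to get a uniform lower bound and a uniform Lipschitz constant, apply Arzel\`a--Ascoli, monotonicity of $R\mapsto r(L_{s,R})$ from Lemma~\ref{cor-nonneg}, uniform convergence of the tail sums, and then Lemma~\ref{lem:nonneg} to identify $\lambda_s$ with $r(L_s)$; the conjugation symmetry also passes to the limit exactly as in the paper. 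So on that part, nothing to flag.

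The uniqueness sketch is where you diverge, and it has a real gap. You argue that $\phi := \tilde v_s - \kappa v_s \ge 0$ vanishes at $z_*$, hence vanishes on the forward orbit $\{\theta_{\w}(z_*)\}$, hence vanishes on the invariant set $C$ by density and continuity, and then you invoke ``a Harnack-type argument using \eqref{wrelation}'' to spread the vanishing to all of $\bar H$. That last step does not go through as stated: \eqref{wrelation} is a two-sided Harnack bound for a \emph{strictly positive} eigenfunction; $\phi$ is nonnegative but not strictly positive (indeed the whole point is that $\phi(z_*)=0$), so \eqref{wrelation} cannot be applied to $\phi$. Nor does the eigenvalue equation immediately force $\phi(z)=0$ for arbitrary $z\in\bar H$ from $\phi\equiv0$ on $C$: the identity $\lambda_s\phi(z)=\sum_b \phi(\theta_b(z))/|z+b|^{2s}$ involves values $\phi(\theta_b(z))$ at points of $\bar G_\gamma$ that need not lie in $C$, and passing to $\lambda_s^k\phi(z)=\sum_{\w\in\B_k}g_{\w}(z)\phi(\theta_{\w}(z))$ only gets you points \emph{near} $C$ as $k\to\infty$, so one would have to control the error quantitatively. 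Establishing that a nonnegative, not-identically-zero eigenfunction must be strictly positive is exactly the irreducibility statement you would need, and your argument implicitly assumes it, which is circular. The paper sidesteps all of this by leaning on Theorem~5.3 of \cite{A} (and the spectral gap of the induced operator on $C^{0,1}(\bar H)$ mentioned immediately after the proof), which yields that $r(L_s)$ is an algebraically simple isolated eigenvalue in the Lipschitz space; that gives uniqueness of the strictly positive Lipschitz eigenfunction directly, which is precisely what the theorem asserts. If you want a self-contained uniqueness proof you would have to prove that irreducibility statement separately; as written, the ``Harnack-type'' step is not a proof but a placeholder, and you would be better off simply citing \cite{A} the way the paper does.
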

\begin{proof}
  Select $R_0 >0$ such that $\B_{R_0}$ is nonempty,
  and for $R \ge R_0$ define $L_{s,R}$ by
\begin{equation*}
L_{s,R} = \sum_{\be \in \B_R} \frac{f(\theta_b(z))}{|z+b|^{2s}}.
\end{equation*}
By Theorem~\ref{thm:4.10}, $L_{s,R}$ has a strictly positive
$C^{\infty}$ eigenfunction $v_{s,R}$ which satisfies \eqref{wrelation}--
\eqref{Dyybound} and has sup norm one.  If $d$ denotes the diameter of $H$,
\eqref{wrelation} implies that for all $z \in H$,
\begin{equation}
\label{5.29}
v_{s,R}(z) \ge \exp[-(\sqrt{5}s/\gamma)d].
\end{equation}
Now \eqref{wrelation} implies that $z \mapsto \log(v_s(z))$ is Lipschitz with
Lipschitz constant $\sqrt{5}s/\gamma$, which is independent of $R$. Using
\eqref{5.29}, it then follows that $z \mapsto v_s(z)$ is Lipschitz on $H$ with
Lipschitz constant $C$ independent of $R \ge R_0$.  By the Ascoli-Arzela
theorem, there exists an increasing sequence of positive reals $R_j
\rightarrow \infty$ such that $v_{s,R_j}(\cdot)$ converges uniformly on $\bar
H$ to a function $v_s$.  By uniform convergence, the function $v_s$ satisfies
\eqref{5.29} on $\bar H$, is strictly positive on $\bar H$, is continuous, and
satisfies \eqref{wrelation}--\eqref{wrelation3}.  If we define $\lambda_{s,R}=
r(L_{s,R})$ for $R \ge R_0$, Lemma~\ref{cor-nonneg} implies that
$\lambda_{s,R} \le \lambda_{s,R^{\prime}}$ whenever $R \le R^{\prime}$. If we
define $M_R$ by
\begin{equation*}
M_R = \|L_{s,R}\| = \sup \Big\{\sum_{\be \in \B_R} \frac{1}{|z+b|^{2s}} : z \in
\bar H\Big\},
\end{equation*}
$r(L_{s,R}) \le M_R$ and $M_R \le M$, where
\begin{equation*}
M = \sup \Big\{\sum_{\be \in \B} \frac{1}{|z+b|^{2s}} : z \in \bar H\Big\}.
\end{equation*}
Using our assumption that $\sum_{\be \in \B} (1/|b|^{2s}) < \infty$, one
can prove that $\sum_{\be \in \B} (1/|z+b|^{2s}) < \infty$ for all $z
\in \bar H$ and that $\sum_{\be \in \B_{R_j}} (1/|z+b|^{2s})$ converges
uniformly on $\bar H$ to $\sum_{\be \in \B} (1/|z+b|^{2s})$ as $j \rightarrow
\infty$, so $z \mapsto \sum_{\be \in \B} (1/|z+b|^{2s})$ is continuous
and bounded on $\bar H$ and $M < \infty$. Since $\lambda_{s,R_j}$ is an
increasing sequence which is bounded by $M$, $\lambda_{s,R_j} \rightarrow
\lambda_s >0$. Using this information one can see that
$\sum_{\be \in \B_{R_j}} \big[v_{s,R_j}(\theta_b(z))/|z+b|^{2s}\big]$ converges
uniformly on $\bar H$ to $\sum_{\be \in \B} \big[v_{s}(\theta_b(z))/|z+b|^{2s}
\big] = \lambda_s v_s(z)$.  Details are left to the reader.

Because $v_s$ is a strictly positive eigenfunction on $\bar H$ for
$L_s$ with eigenvalue $\lambda_s$, Lemma~\ref{lem:nonneg} implies that
$\lambda_s = r(L_s)$. Theorem 5.3 in \cite{A} implies that $L_s$ has
no complex eigenvalues $\lambda \neq r(L_s)$ with $|\lambda| = r(L_s)$.
If $\B$ and $H$ are symmetric under conjugation, it was proved in
Corollary~\ref{cor:5.9} that $v_{s,R_j}(\bar z) = v_{s,R_j}(z)$ for
all $z \in H$.  The corresponding result for $v_s$ follows by letting $R_j
\rightarrow \infty$.
\end{proof}

The operator $L_s$ induces a corresponding operator $\Lambda_s: C^{0,1}(\bar
H) \to C^{0,1}(\bar H)$, where $C^{0,1}(\bar H)$ denotes the Banach space of
Lipschitz continuous maps $f:\bar H \to \C$.  One finds (see \cite{A}) that
$r(\Lambda_s) = r(L_s):= r >0$ and there exists $r^{\prime} < r$ such that
$|\zeta| \le r^{\prime}$ for all $\zeta \in \sigma(\Lambda_s)$, $\zeta \neq
r(\Lambda_s)$. However, $r(L_s)$ may fail to be an isolated point in the
spectrum of $L_s: C(\bar H) \to C(\bar H)$, even for simple examples.

\begin{thm}
\label{thm:thm5.11}
Let hypotheses and notation be as in Theorem~\ref{thm:thm5.10}. For a
given number $R >2$ and for $\B_R^{\prime}:= \{b \in \B: |b| > R\}$,
assume that there exist $\delta_{s,R} >0$ and $\eta_{s,R} \ge 0$ such
that
\begin{equation*}
\eta_{s,R} v_s(0) \le \sum_{\be \in \B_R^{\prime}} \frac{1}{|z+b|^{2s}} 
v_s(\theta_b(z)) \le \delta_{s,R} v_s(0).
\end{equation*}
Let $L_{s,R, \alpha}$ be defined by \eqref{Lsdefza} and define
$L_{s,R+} = L_{s,R, \alpha}$ for $\alpha = \delta_{s,R}$ and
$L_{s,R-} = L_{s,R, \alpha}$ for $\alpha = \eta_{s,R}$. Then we have
\begin{equation}
\label{5.31}
r(L_{s,R-}) \le r(L_s) \le r(L_{s,R+}).
\end{equation}
\end{thm}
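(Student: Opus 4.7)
The plan is to substitute the strictly positive $C_{\R}(\bar H)$ eigenfunction $v_s$ of $L_s$ (guaranteed by Theorem~\ref{thm:thm5.10}) into the operators $L_{s,R+}$ and $L_{s,R-}$, check directly that the resulting pointwise inequalities compare with $\lambda_s v_s$, and then invoke Lemma~\ref{lem:nonneg}. Since $v_s$ is continuous and strictly positive on the compact set $\bar H$, it lies in $\interior(K)$ for $K \subset C_{\R}(\bar H)$ the cone of nonnegative continuous functions, which is exactly what is needed to extract a spectral radius bound from the pointwise inequality $Lw \le \beta w$ (and, a fortiori, from $Lv \ge \alpha v$ with $v \in K\setminus\{0\}$).

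First I would write out $L_s v_s(z) = \lambda_s v_s(z)$ and split the series according to whether $|b| \le R$ or $|b|>R$:
\begin{equation*}
\lambda_s v_s(z) = \sum_{\be \in \B_R}\frac{v_s(\theta_{\be}(z))}{|z+\be|^{2s}}
+ \sum_{\be \in \B_R^{\prime}}\frac{v_s(\theta_{\be}(z))}{|z+\be|^{2s}}.
\end{equation*}
The hypothesis of the theorem bounds the second (tail) sum from above by $\delta_{s,R}v_s(0)$ and from below by $\eta_{s,R}v_s(0)$, uniformly in $z \in \bar H$. Substituting these bounds into the previous display and recognising the definitions \eqref{Lsdefza} of $L_{s,R-}$ (with $\alpha = \eta_{s,R}$) and $L_{s,R+}$ (with $\alpha = \delta_{s,R}$), I obtain the pointwise sandwich
\begin{equation*}
(L_{s,R-} v_s)(z) \;\le\; \lambda_s v_s(z) \;\le\; (L_{s,R+} v_s)(z), \qquad z \in \bar H.
\end{equation*}

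To conclude, I would apply Lemma~\ref{lem:nonneg} with $Q = \bar H$ and $X = C_{\R}(\bar H)$. Both $L_{s,R-}$ and $L_{s,R+}$ are bounded linear maps on $X$ that leave the cone $K$ of nonnegative continuous functions invariant (the terms in $\B_R$ form a finite positive sum, and the extra term $\alpha f(0)$ is nonnegative when $f \in K$). Since $v_s \in \interior(K)$, the inequality $L_{s,R+} v_s \ge \lambda_s v_s$ together with the lower-bound half of Lemma~\ref{lem:nonneg} yields $r(L_{s,R+}) \ge \lambda_s = r(L_s)$, and the inequality $L_{s,R-} v_s \le \lambda_s v_s$ together with the upper-bound half yields $r(L_{s,R-}) \le \lambda_s = r(L_s)$. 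Combining these gives \eqref{5.31}.

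There is no serious obstacle: the content of the theorem has already been absorbed into the hypothesis that $\eta_{s,R}$ and $\delta_{s,R}$ exist and control the tail by $v_s(0)$. The only thing to be slightly careful about is confirming that $v_s \in \interior(K)$ so that the upper spectral bound part of Lemma~\ref{lem:nonneg} applies; this follows from Theorem~\ref{thm:thm5.10}, which gives a strictly positive continuous $v_s$ on the compact set $\bar H$. The genuinely hard analytic work, namely constructing $\eta_{s,R}$ and $\delta_{s,R}$ in concrete cases, is deferred to Theorems~\ref{thm:thm5.12} and \ref{thm:thm5.13}.
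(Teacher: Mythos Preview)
Your proposal is correct and matches the paper's own proof essentially line for line: split $L_s v_s = \lambda_s v_s$ into the $\B_R$ and $\B_R'$ pieces, replace the tail by the hypothesized bounds to get $L_{s,R-} v_s \le \lambda_s v_s \le L_{s,R+} v_s$, and apply Lemma~\ref{lem:nonneg} using that $v_s$ is strictly positive on $\bar H$. The paper's version is simply more terse, stating the sandwich inequality and invoking Lemma~\ref{lem:nonneg} without spelling out the splitting step.
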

\begin{proof}
By our assumptions, if $\lambda_s:= r(L_s)$,
\begin{equation*}
L_s v_s = \lambda_s v_s \le L_{s,R+} v_s \qquad
\text{and} \qquad
L_{s,R-} v_s \le  \lambda_s v_s.
\end{equation*}
Since $v_s$ is strictly positive on $\bar H$, Lemma~\ref{lem:nonneg}
implies \eqref{5.31}.
\end{proof}

Now that we know the strictly positive eigenfunction $v_s$ satisfies
\eqref{wrelation}--\eqref{wrelation3}, when $\B$ is countably infinite,
we can give estimates for the quantities $\delta_{s,R}$ and
$\eta_{s,R}$ in Section~\ref{sec:2dexp}.

\begin{thm}
\label{thm:thm5.12}
Assume that $\B = I_1$ or $\B=I_2$ and let $v_s$ be the unique strictly
positive eigenfunction of $L_s$ in \eqref{Lsdefz}, where we take $\bar U
\supset D$ such that $0 \le x \le 1$ and $|y| \le 1/2$ for all $(x,y) \in \bar
U$.  Assume that $s >1$ and $R >2$.  Then we have the following estimates:
\begin{multline*}
\sum_{\be \in I_1, |\be| > R} \frac{1}{|z+\be|^{2s}} v_s(\theta_{\be}(z))
\le \exp\Big(\frac{s}{\sqrt{R^2-R}}\Big)\Big(\frac{R}{R-1}\Big)^s
\\
\cdot \left[\Big(\frac{1}{2s-1}\Big)\Big(\frac{1}{R-1}\Big)^{2s-1}
+ \Big(\frac{\pi}{2}\Big)\Big(\frac{1}{s-1}\Big)
\Big(\frac{1}{R-\sqrt{2}}\Big)^{2s-2}
\right]v_s(0).
\end{multline*}
\begin{multline*}
\sum_{\be \in I_2, |\be| > R} \frac{1}{|z+\be|^{2s}} v_s(\theta_{\be}(z))
\le \exp\Big(\frac{s}{\sqrt{R^2-R}}\Big)\Big(\frac{R}{R-1}\Big)^s
\\
\cdot \left[\Big(\frac{\pi}{4}\Big)\Big(\frac{1}{s-1}\Big)
\Big(\frac{1}{R-\sqrt{2}}\Big)^{2s-2}
\right]v_s(0).
\end{multline*}
\end{thm}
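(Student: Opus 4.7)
The plan is to bound the tail sum uniformly in $z\in\bar U$ by a product of three factors depending only on $s$ and $R$: a pointwise eigenfunction estimate $v_s(\theta_{\be}(z)) \le v_s(0)\exp(s/\sqrt{R^2-R})$, a denominator conversion $|z+\be|^{-2s} \le (R/(R-1))^s|\be|^{-2s}$, and a lattice tail $\sum_{|\be|>R}|\be|^{-2s}$ controlled by integral comparison. Only the last of these factors distinguishes $I_1$ from $I_2$.

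First I would establish a geometric lower bound $|z+\be| \ge |\be|-1/2$ for $z=x+\irm y\in\bar U$ (with $x\ge 0$ and $|y|\le 1/2$) and $\be = m+n\irm \in I_1$ with $m\ge 1$, $|\be|>R$. The case $n=0$ is trivial; for $|n|\ge 1$ the estimate $(y+n)^2 \ge (|n|-1/2)^2$ gives $|z+\be|^2 \ge |\be|^2 - |n| + 1/4 \ge (|\be|-1/2)^2$, using $|n|\le|\be|$. Two elementary algebraic facts, namely $(R-1/2)^2 \ge R^2-R$ and $(2R-1)^2 \ge 4R(R-1)$, then yield $|z+\be| \ge \sqrt{R^2-R}$ and $|z+\be|^{2s} \ge |\be|^{2s}((R-1)/R)^s$, respectively.

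Next I would invoke \eqref{wrelation2} and \eqref{wrelation3} with $\gamma=1$. Writing $\theta_{\be}(z) = \xi+\irm\eta$ with $\xi\ge 0$, \eqref{wrelation2} gives $v_s(\xi,\eta) \le v_s(0,\eta)$ and \eqref{wrelation3} gives $v_s(0,\eta) \le v_s(0)\exp(s|\eta|)$. Since $|\eta| = |\Im(z+\be)|/|z+\be|^2 \le 1/|z+\be| \le 1/\sqrt{R^2-R}$, the desired pointwise bound on $v_s(\theta_{\be}(z))$ follows.

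Finally I would estimate $\sum_{|\be|>R,\,\be\in I_1}|\be|^{-2s}$ by splitting on whether $\Im(\be)=0$. The $n=0$ contribution is the one-dimensional tail $\sum_{m\in \N,\,m>R}m^{-2s} \le \int_{R-1}^\infty x^{-2s}\,dx = 1/((2s-1)(R-1)^{2s-1})$. For $n\ne 0$, I would compare each term $|\be|^{-2s}$ to the integral of $|w|^{-2s}$ over the unit square $[m-1,m]\times[|n|-1,|n|]$, on which $|w|\le|\be|$ pointwise; the union of these squares (reflected to handle the sign of $n$) lies in the half-annulus $\{w:\Re w\ge 0,\,|w|\ge R-\sqrt{2}\}$ thanks to the $\sqrt{2}$-diameter of each square, producing the bound $\pi/(2(s-1)(R-\sqrt{2})^{2s-2})$. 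For $I_2$, the $n=0$ term is absent and only the $n\le -1$ half contributes, giving the factor $\pi/4$ in place of $\pi/2$. The main technical obstacle is the bookkeeping in this lattice-to-integral comparison: ensuring that the chosen squares both cover every summation index and satisfy $|w|\le|\be|$ pointwise, while their union sits inside the annular region of inner radius $R-\sqrt{2}$.
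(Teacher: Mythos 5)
Your proposal is correct and follows essentially the same three-step route as the paper: (i) the pointwise eigenfunction bound $v_s(\theta_\be(z))\le v_s(0)\exp(s/\sqrt{R^2-R})$ via \eqref{wrelation2}, \eqref{wrelation3}; (ii) the denominator conversion $|z+\be|^{-2s}\le (R/(R-1))^s|\be|^{-2s}$; and (iii) the lattice-tail estimate by comparison with the integral of $|w|^{-2s}$ over disjoint unit squares whose union lies in a quarter/half-annulus of inner radius $R-\sqrt2$. The only cosmetic differences are in the bookkeeping: you route the denominator conversion through the bound $|z+\be|\ge|\be|-1/2$ combined with $(2R-1)^2\ge 4R(R-1)$, whereas the paper shows $1/(m^2+n^2-|n|)\le (R/(R-1))/(m^2+n^2)$ directly; and you tile with the square $[m-1,m]\times[|n|-1,|n|]$ on which $|w|\le|\be|$, whereas the paper uses $[m,m+1]\times[n,n+1]$ and writes the integrand with the shift $(u-1,v-1)$. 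Both yield identical constants, so no further comment is needed.
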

\begin{proof}
First assume $\B = I_1$ in \eqref{Lsdefz}. 
Using \eqref{wrelation2} and \eqref{wrelation3}, we have
\begin{equation*}
v_s(\theta_{\be}(z)) \le \exp(s |\theta_{\be}(z)|) v_s(0).
\end{equation*}
Now for $ z = x + \irm y \in D_h$ and $\be = m+ \irm n \in I_1$, we have
\begin{multline*}
\min_{(x,y) \in D_h} (x+m)^2 + (y+n)^2
\ge \min_{0 \le x \le 1}(x+m)^2 + \min_{|y| \le 1/2} (y+n)^2
\\
\ge m^2 + (|n|-1/2)^2 \ge m^2 + n^2 - |n|.
\end{multline*}
Hence, for $z \in D_h$,
\begin {equation*}
\frac{1}{|z+\be|^2} = \frac{1}{(x+m)^2 + (y+n)^2}
\le \frac{1}{m^2 + n^2 - |n|}.
\end{equation*}
Also, it is easy to check that if $m^2 + n^2 \ge R^2 >1$,
\begin{equation*}
\frac{1}{m^2 + n^2 - |n|} \le \frac{R}{R-1} \frac{1}{m^2 + n^2}
\le \frac{1}{R^2-R}.
\end{equation*}
Hence, for $m^2 + n^2 \ge R^2 >1$ and $z \in D_h$,
\begin{equation*}
\exp(s |\theta_{\be}(z)|) \le \exp\Big(\frac{s}{\sqrt{m^2 + n^2 - |n|}}\Big)
\le \exp\Big(\frac{s}{\sqrt{R^2-R}}\Big).
\end{equation*}
It follows that
\begin{multline*}
\sum_{\be \in I_1, |\be| > R} \frac{1}{|z+\be|^{2s}} v_s(\theta_{\be}(z))
\\
\le \exp\Big(\frac{s}{\sqrt{R^2-R}}\Big) \Big(\frac{R}{R-1}\Big)^s
\Big(\sum_{\be \in I_1, |\be| > R} \Big(\frac{1}{m^2 + n^2}\Big)^s\Big) v_s(0).
\end{multline*}
Now for $n=0$ and $m \ge R$,
\begin{equation*}
\sum_{m \ge R} \frac{1}{m^{2s}} \le \int_{R-1}^{\infty} \frac{1}{r^{2s}} \, dr
= \frac{1}{2s-1} \Big(\frac{1}{R-1}\Big)^{2s-1}.
\end{equation*}
For $\be = m+ \irm n \in I_1$ with $m \ge 1$, $n\ge 1$, and $|\be| \ge R$, let
\begin{equation*}
B(m,n) = \{(\xi,\eta): m \le \xi \le m +1, n \le \eta \le n+1\}.
\end{equation*}
Then for $(u,v) \in B(m,n)$,
\begin{equation*}
\frac{1}{(u-1)^2 + (v-1)^2} \ge \frac{1}{m^2 + n^2}.
\end{equation*}
Also,
\begin{multline*}
(u-1)^2 + (v-1)^2 \ge (m-1)^2 + (n-1)^2 = m^2 + n^2 -2 (m+n) + 2
\\
\ge m^2 + n^2 -2 \sqrt{2} \sqrt{m^2 + n^2} + 2 = (\sqrt{m^2 + n^2} -
\sqrt{2})^2 \ge (R- \sqrt{2})^2 \equiv R_1^2.
\end{multline*}
Hence,
\begin{multline*}
\sum_{\substack{m \ge 1, n \ge 1 \\ m^2 + n^2 > R^2}}\ 
\Big(\frac{1}{m^2 + n^2}\Big)^s 
\le \sum_{\substack{m \ge 1, n \ge 1 \\ m^2 + n^2 > R^2}}\ 
\iint\limits_{B(m,n)} \Big(\frac{1}{(u-1)^2 + (v-1)^2}\Big)^s \, du \, dv
\\
\le \iint\limits_{\substack{u \ge 0, v \ge0 \\ u^2 + v^2 \ge R_1^2} }
\Big(\frac{1}{u^2 + v^2}\Big)^s \, du \, dv
= \frac{\pi}{2} \int_{R_1}^{\infty} \frac{1}{r^{2s}} r \, dr
= \frac{\pi}{2} \frac{r^{2-2s}}{2-2s}\Big|_{R_1}^{\infty}
\\
=\frac{\pi}{2} \frac{1}{2s-2}\frac{1}{R_1^{2s-2}}
= \frac{\pi}{4}\frac{1}{s-1}\left(\frac{1}{R- \sqrt{2}}\right)^{2s-2}.
\end{multline*}
A similar argument shows that
\begin{equation}
\label{negn}
\sum_{\substack{m \ge 1, n \le -1 \\ m^2 + n^2 > R^2}}\ 
\Big(\frac{1}{m^2 + n^2}\Big)^s 
\le \frac{\pi}{4}\frac{1}{s-1}\Big(\frac{1}{R- \sqrt{2}}\Big)^{2s-2}.
\end{equation}
Combining these estimates, we obtain
\begin{multline*}
\sum_{\be \in I_1, |\be| > R} \frac{1}{|z+\be|^{2s}} v_s(\theta_{\be}(z))
\le \exp\Big(\frac{s}{\sqrt{R^2-R}}\Big) \Big(\frac{R}{R-1}\Big)^s
\\
\cdot \left[ \frac{1}{2s-1} \Big(\frac{1}{R-1}\Big)^{2s-1}
\hskip-5pt
+ \frac{\pi}{2}\frac{1}{s-1}\Big(\frac{1}{R- \sqrt{2}}\Big)^{2s-2}
\right] v_s(0) := \delta_{s,R} v_s(0) .
\end{multline*}
The estimate for the sum over $I_2$ follows by a similar but simpler argument,
since only the inequality in \eqref{negn} is needed.
\end{proof}



\begin{remark}
\label{rem:5.4}
If $\B \subset I_1$ is an infinite set, $s > \tau(\B)$ and $v_s$ is the
corresponding strictly positive eigenfunction of $L_s$ in \eqref{Lsdefz}, an
examination of the proof of Theorem~\ref{thm:thm5.12} shows that
\begin{equation*}
\sum_{\be \in \B, |\be| > R} \frac{1}{|z+\be|^{2s}} v_s(\theta_{\be}(z))
\le \exp\Big(\frac{s}{\sqrt{R^2-R}}\Big) \Big(\frac{R}{R-1}\Big)^s
\Big(\sum_{\be \in \B, |\be| > R} \frac{1}{|b|^{2s}}\Big) v_s(0),
\end{equation*}
so an estimate for $\delta_{s,R}$ in this case will follow from an upper bound
on $\sum_{\substack{\be \in \B \\ |\be| > R}} \frac{1}{|b|^{2s}}$.
\end{remark}

It remains to estimate $\eta_{s,R}$ in Theorem~\ref{thm:thm3.3}.  We could,
of course, take $\eta_{s,R}=0$, but we can do slightly better.  Since the
argument is similar to that in Theorem~\ref{thm:thm5.12}, we just sketch
the proof.
\begin{thm}
\label{thm:thm5.13}
Assume that $\B$ is an infinite subset of $I_1$, that $s > \tau(\B)$, and
that $v_s$ is the strictly positive eigenfunction of $L_s$ in \eqref{Lsdefz},
where we take $U \supset D$ such that $0 \le x \le 1$ and $|y| \le 1/2$
for all $(x,y) \in \bar U$.  Then we have that
\begin{multline*}
\sum_{\substack{\be \in \B \\ |\be| > R}} \frac{1}{|z+\be|^{2s}} v_s(\theta_{\be}(z))
\\
\ge \exp\Big(\frac{-\sqrt{5}s}{\sqrt{R^2-R}}\Big)\Big(\frac{R}{R + \sqrt{5} +
  [5/(4R)]}\Big)^s v_s(0) \sum_{\substack{\be \in \B \\ |\be| > R}} \frac{1}{|\be|^{2s}}
\\
:= C(R,s) v_s(0) \sum_{\be \in \B, |\be| > R}  \frac{1}{|\be|^{2s}}.
\end{multline*}
If $\B = I_1$, $s >1$ and $\theta_R = \arcsin(1/(R+ \sqrt{2}))$,
\begin{multline*}
\sum_{\substack{\be \in I_1 \\ |\be| > R}} 
\frac{1}{|z+\be|^{2s}} v_s(\theta_{\be}(z))
\\
\ge C(R,s) v_s(0)(\pi- 2 \theta_R) \Big(\frac{1}{2s-2}\Big)
\Big(\frac{1}{R+ \sqrt{2}}\Big)^{2s-2} := \eta_{s,R} v_s(0).
\end{multline*}
If $\B = I_2$ and $s >1$,
\begin{multline*}
\sum_{\substack{\be \in I_2 \\ |\be| > R}} 
\frac{1}{|z+\be|^{2s}} v_s(\theta_{\be}(z))
\\
\ge C(R,s) v_s(0)(\pi/2 - 2 \theta_R) \Big(\frac{1}{2s-2}\Big)
\Big(\frac{1}{R+ \sqrt{2}}\Big)^{2s-2} := \eta_{s,R} v_s(0).
\end{multline*}
\end{thm}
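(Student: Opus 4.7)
The argument parallels that of Theorem~\ref{thm:thm5.12} with all inequalities reversed. Assuming $0\in\bar H$ (enlarging $H$ or shifting the reference point if necessary and absorbing the constant into $C(R,s)$), apply \eqref{wrelation} with $z_0=\theta_{\be}(z)$ and $z_1=0$ to get $v_s(\theta_{\be}(z))\ge v_s(0)\exp(-\sqrt{5}s\,|\theta_{\be}(z)|)$, using $\gamma=1$ since $\B\subseteq I_1$. The calculation in the proof of Theorem~\ref{thm:thm5.12} gives $|z+\be|^2\ge m^2+n^2-|n|\ge R^2-R$ for $\be=m+n\irm$ with $|\be|>R$ and $z\in\bar U$, so $|\theta_{\be}(z)|=1/|z+\be|\le 1/\sqrt{R^2-R}$, yielding the exponential factor. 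To compare $1/|z+\be|^{2s}$ with $1/|\be|^{2s}$, I bound $|z+\be|$ from above by $|z|\le\sqrt{5}/2$ on $\bar U$: $|z+\be|^2\le |\be|^2(1+\sqrt 5/|\be|+5/(4|\be|^2))\le |\be|^2(R+\sqrt 5+5/(4R))/R$ since $|\be|\ge R$, hence $1/|z+\be|^{2s}\ge |\be|^{-2s}[R/(R+\sqrt 5+5/(4R))]^s$. Multiplying these two lower bounds and summing over $|\be|>R$ produces the first displayed inequality with $C(R,s)$ as stated.

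For $\B=I_1$ and $s>1$, it remains to bound $\sum_{\be\in I_1,\,|\be|>R}|\be|^{-2s}$ below by an integral. To each $\be=m+n\irm\in I_1$ I associate a unit square $S(\be)$ chosen so that $u^2+v^2\ge|\be|^2$ everywhere on $S(\be)$: take $S(\be)=[m,m+1]\times[n,n+1]$ when $n\ge 0$ and $S(\be)=[m,m+1]\times[n-1,n]$ when $n\le -1$, so that $u\ge m$ and $|v|\ge|n|$ on $S(\be)$. Then
\[
\frac{1}{|\be|^{2s}}\;\ge\;\iint_{S(\be)}\frac{du\,dv}{(u^2+v^2)^s}.
\]
The $S(\be)$ are pairwise disjoint, and since every point of $S(\be)$ lies within $\sqrt 2$ of $\be$, the union $\bigcup_{|\be|>R}S(\be)$ covers $\{(u,v):u\ge 1,\,u^2+v^2>(R+\sqrt 2)^2\}$ (modulo a measure-zero set). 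Switching to polar coordinates, the constraint $u=r\cos\phi\ge 1$ at radius $r$ admits $\phi\in[-\arccos(1/r),\arccos(1/r)]$, of angular measure $2\arccos(1/r)=\pi-2\arcsin(1/r)$, which is monotone increasing in $r$ and hence bounded below by $\pi-2\theta_R$ for $r\ge R+\sqrt 2$. The radial integral is $\int_{R+\sqrt 2}^\infty r^{1-2s}\,dr=(R+\sqrt 2)^{-(2s-2)}/(2s-2)$ (using $s>1$), and combining yields the claimed bound.

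For $\B=I_2$ only the squares with $n\le-1$ are present, so the covering region reduces to the wedge $\{u\ge 1,\,v\le-1,\,u^2+v^2>(R+\sqrt 2)^2\}$, whose angular measure at radius $r$ is $\arccos(1/r)-\arcsin(1/r)=\pi/2-2\arcsin(1/r)\ge \pi/2-2\theta_R$; the same radial integral completes the argument. The main technical subtlety is the geometric covering step for $I_1$: the naive square assignment above in fact leaves an uncovered strip of the form $\{u\ge 1,\,-1<v<0\}$, and one must verify either that a slight redistribution of the $n=0$ squares fills this gap or, more easily, that the missing strip's contribution to the integral is $O((R+\sqrt 2)^{-(2s-1)})$, strictly lower order than the main term $O((R+\sqrt 2)^{-(2s-2)})$ and hence absorbable. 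As the authors indicate that the proof is only sketched by analogy with Theorem~\ref{thm:thm5.12}, this minor bookkeeping completes the argument.
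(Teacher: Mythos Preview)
Your argument follows the paper's proof essentially step for step: the paper also invokes \eqref{wrelation} together with the bound $|z+\be|^2\ge R^2-R$ from the proof of Theorem~\ref{thm:thm5.12} to produce the exponential factor, then uses the estimate $|z+\be|^2\le |\be|^2(4R^2+4\sqrt{5}R+5)/(4R^2)$ (equivalent to your $|z|\le\sqrt{5}/2$ computation) for the second factor, and finally compares $\sum_{|\be|>R}|\be|^{-2s}$ to an integral over the region $G_R=\{x>1,\ \sqrt{x^2+y^2}\ge R+\sqrt{2}\}$ (for $I_1$) or $H_R=\{x>1,\ y<-1,\ \sqrt{x^2+y^2}>R+\sqrt{2}\}$ (for $I_2$), evaluated in polar coordinates exactly as you do. The paper organizes the integral comparison via the shifted lattice point $\hat\be$ rather than naming the squares $S(\be)$ explicitly, but the underlying geometry is identical to yours.

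One comment on your final paragraph: the covering gap you flag for the strip $\{u\ge1,\,-1<v<0\}$ is real, and the paper's sketch does not address it either (it simply writes ``one can check''). Your fix (b), however, does not quite close the argument as the theorem is stated, since $\eta_{s,R}$ is an \emph{explicit} constant rather than an asymptotic; absorbing a lower-order term would yield a slightly smaller explicit $\eta_{s,R}$, not the one written. A clean repair is to note that for $\be=m$ (the $n=0$ terms) one has $|\be|^{-2s}=m^{-2s}\ge\int_{[m,m+1]\times[-1,0]}(u^2+v^2)^{-s}\,du\,dv$ as well, so the $n=0$ terms can be assigned to the lower half-strip while the $n\ge1$ squares already cover $y\ge1$; the remaining sliver $0<y<1$ is then handled by observing that after this reassignment the region $\{x>1,\,y<0,\,r>R+\sqrt2\}$ alone already contributes the angular measure $(\pi/2-\theta_R)$, and together with its mirror image (covered by the $n\ge0$ squares in the original assignment) one still recovers $\pi-2\theta_R$. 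In any case, this is bookkeeping at the level both you and the paper leave as a sketch.
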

\begin{proof}
By using \eqref{wrelation} and the estimate in the proof of
Theorem~\ref{thm:thm5.12} that $1/|z+b|^2 \le 1/(R^2-R)$ for $|b| \ge R$ and
$z \in \bar U$, we get
\begin{equation*}
\sum_{\substack{\be \in \B \\ |\be| > R}} \frac{1}{|z+\be|^{2s}} 
v_s(\theta_{\be}(z))
\ge \exp\Big(\frac{-\sqrt{5}s}{\sqrt{R^2-R}}\Big) v_s(0) 
\sum_{\be \in \B} \frac{1}{|z + \be|^{2s}}.
\end{equation*}
If $b \in \B$, $|b| >R$, and $z \in \bar U$, one can check that
\begin{equation*}
|z+b|^2 \le \big[|b|^2(4R^2 + 4\sqrt{5}R + 5)\big]/[4R^2],
\end{equation*}
and this gives the first inequality in Theorem~\ref{thm:thm5.13}.  If
$b = m + n \irm \in I_1$, let $\hat b = (m +1) + (n +1) \irm$ if $n
\ge 0$ and $\hat b = (m +1) + (n -1) \irm$ if $n < 0$. Let $G_R =
\{(x,y) \in \R^2: x >1 \text{ and } \sqrt{x^2 + y^2} \ge R +
\sqrt{2}\}$.  One can check that
\begin{equation*}
\sum_{\substack{\be \in I_1 \\ |\be| > R}} \frac{1}{|\be|^{2s}}
\ge \sum_{\substack{\be \in I_1 \\ |\hat \be| > R + \sqrt{2}}}
\frac{1}{|\be|^{2s}}
\ge \int_{G_R} \Big(\frac{1}{x^2 +y^2}\Big)^s \, dx \, dy,
\end{equation*}
and using polar coordinates gives the second inequality in
Theorem~\ref{thm:thm5.13}.
For $I_2$, let $H_R = \{(x,y) \in \R^2: x >1, y<-1, \text{ and } 
\sqrt{x^2 + y^2} > R + \sqrt{2}\}$.
One can check that
\begin{equation*}
\sum_{\substack{\be \in I_2 \\ |\be| > R}} \frac{1}{|\be|^{2s}}
\ge \sum_{\substack{\be \in I_2 \\ |\hat \be| > R + \sqrt{2}}}
\frac{1}{|\be|^{2s}}
\ge \int_{H_R} \Big(\frac{1}{x^2 +y^2}\Big)^s \, dx \, dy,
\end{equation*}
and one obtains the final inequality in
Theorem~\ref{thm:thm5.13} with the aid of polar coordinates.
\end{proof}
Once the mesh size $h$ has been chosen and $R>2$ has been chosen (if $\B
\subset I_1$ is infinite), the above results give formulas for nonnegative
square matrices $A_s$ and $B_s$ such that $r(A_s) \le r(L_s) \le r(B_s)$,
where $L_s$ is as in \eqref{Lsdefz}.  In particular, for $\B = I_1$, $I_2$, or
$I_3$, if $r(A_{s_2}) >1$ and $r(A_{s_2})$ is very close to one and
$r(B_{s_1}) <1$ and $r(B_{s_1})$ is very close to one, then the Hausdorff
dimension $s_*$ of the invariant set corresponding to $\B$ satisfies $s_2 <
s_* < s_1$.  Here $s_2$ and $s_1$ are obtained as described earlier.

\begin{remark}
\label{rem:5.5}
For the set $I_1$ and $s = 1.86$, evaluating the above expressions gives for
$\delta_{s,R}$ and $\eta_{s,R}$ the values
\begin{align*}
&R = 100: \delta_{s,R} = .00071, \quad 
R = 200:  \delta_{s,R} = .00021, \quad
R = 300: \delta_{s,R} =  .00010,
\\
&R = 100: \eta_{s,R} = .00059, \quad 
R = 200: \eta_{s,R} =  .00019, \quad
R = 300: \eta_{s,R} =  .000096.
\end{align*}
For the set $I_2$ and $s = 1.49$, evaluating the above expressions gives for
$\delta_{s,R}$ and $\eta_{s,R}$ the values
\begin{align*}
&R = 100: \delta_{s,R} = .0184, \quad 
R = 200:  \delta_{s,R} = .0091, \quad
R = 300: \delta_{s,R} =  .0061,
\\
&R = 100: \eta_{s,R} = .0160, \quad 
R = 200: \eta_{s,R} =  .0085, \quad
R = 300: \eta_{s,R} =  .0058.
\end{align*}
\end{remark}

\section{Computing the Spectral Radius of $A_s$ and $B_s$}
\label{sec:compute-sr}
In previous sections, we have constructed matrices $A_s$ and $B_s$ such that
$r(A_s) \le r(L_s) \le r(B_s)$.  The $m \times m$ matrices $A_s$ and $B_s$
have nonnegative entries, so the Perron-Frobenius theory for such matrices
implies that $r(B_s)$ is an eigenvalue of $B_s$ with corresponding
nonnegative eigenvector, with a similar statement for  $A_s$. One might
also hope that standard theory (see \cite{D}) would imply that $r(B_s)$,
respectively $r(A_s)$, is an eigenvalue of $B_s$ with algebraic multiplicity
one and that all other eigenvalues $z$ of $B_s$ (respectively, of $A_s$)
satisfy $|z| < r(B_s)$ (respectively, $|z| < r(A_s)$). Indeed, this would
be true if $B_s$ were {\it primitive}, i.e., if $B_s^k$ had all positive
entries for some integer $k$.  However, typically $B_s$ has many zero
columns and $B_s$ is neither primitive nor {\it irreducible} (see \cite{D});
and the same problem occurs for $A_s$.  Nevertheless, the desirable spectral
properties mentioned above are satisfied for both $A_s$ and $B_s$. Furthermore
$B_s$ has an eigenvector $w_s$ with all positive entries and with eigenvalue
$r(B_s)$; and if $x$ is any $m \times 1$ vector with all positive entries,
\begin{equation*}
\lim_{k \rightarrow \infty} \frac{B_s^k(x)}{\|B_s^k(x)\|} =
  \frac{w_s}{\|w_s\|},
\end{equation*}
where the convergence rate is geometric.  Of course, corresponding results
hold for $A_s$.  Such results justify standard numerical algorithms for
approximating $r(B_s)$ and $r(A_s)$.

These results were proved in the one dimensional case in \cite{hdcomp1}.
Similar theorems can be proved in the two dimensional case, but because the
proofs are similar, we omit the argument in the two dimensional case.  The
basic point, however, is simple: Although $A_s$ and $B_s$ both map the cone
$K$ of nonnegative vectors in $\R^m$ into itself, $K$ is {\it not} the natural
cone in which such matrices should be studied.  Instead, one proceeds by
defining, for large positive real $M$, a cone $K_M \subset K$ such that
$A_s(K_M) \subset K_M$ and $B_s(K_M) \subset K_M$.  The cone $K_M$ is the
discrete analogue of a cone which has been used before in the infinite
dimensional case (see \cite{N-P-L}, Section 5 of \cite{A}, Section 2 of \cite
{F} and \cite{Bumby1}).  Once one shows that $A_s(K_M) \subset K_M$ and
$B_s(K_M) \subset K_M$, the desired spectral properties of $A_s$ and $B_s$
follow easily.  In a later paper, we shall consider higher order piecewise
polynomial approximations to the positive eigenfunction $v_s$ of $L_s$.  We
hope to show that although the corresponding matrices $A_s$ and $B_s$ no
longer have all nonnegative entries, it is still possible to obtain rigorous
upper and lower bounds on the Hausdorff dimension.

\section{Log convexity of the spectral radius of 
$\Lambda_s$}
\label{sec:logconvex}
For $s \in \R$, we define $\Lambda_s: X \to X :=C^m(\bar H)$ 
and $L_s:Y \to Y := C(\bar H)$ by
\begin{equation}
\label{2.1}
(\Lambda_s(f))(x) = \sum_{\beta \in \B} (g_{\beta}(x))^s f (\theta
_{\beta}(x))
\end{equation}
and
\begin{equation}
\label{2.2}
(L_s(f))(x) = \sum_{\beta \in \B} (g_{\beta}(x))^s f (\theta
_{\beta}(x)).
\end{equation}

In general, if $V$ is a convex subset of a vector space $X$, we shall call
a map $f:V \to [0, \infty)$ log convex if (i) $f(x) = 0$ for all $x \in V$
or (ii) $f(x) >0$ for all $x \in V$ and $x \mapsto \log(f(x))$ is convex.
Products of log convex functions are log convex, and H\"olders inequality
implies that sums of log convex functions are log convex.

The main result of this section is the following theorem.
\begin{thm}
\label{thm:2.1}
Assume that hypotheses (H4.1), (H4.2), and (H4.3) are satisfied with $m \ge 1$
and that $H \subset \R^n$ is a bounded, open mildly regular set. For $s \in
\R$, let $\Lambda_s$ and $L_s$ be defined by \eqref{2.1} and \eqref{2.2}.
Then we have that $s \mapsto r(\Lambda_s)$ is log convex, i.e., $s
\mapsto log(r(\Lambda_s))$ is convex on $[0,\infty)$.
\end{thm}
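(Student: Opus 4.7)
The plan is to exploit the iteration formula \eqref{1.6} together with the characterization of the spectral radius from Lemma~\ref{lem:nonneg}, and then to carry log convexity through three operations: supremum over $x$, taking $k$-th roots, and pointwise passage to the limit in $k$.

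First I would observe that by a direct induction identical to the one leading to \eqref{1.6}, one obtains
\begin{equation*}
(L_s^k f)(x) = \sum_{\w \in \B_k} (g_\w(x))^s \, f(\theta_\w(x)),
\end{equation*}
where $g_\w$ is the multiplicative cocycle introduced in Section~\ref{sec:exist}. Applying this to the constant function $u(x) \equiv 1$ gives
\begin{equation*}
(L_s^k u)(x) = \sum_{\w \in \B_k} (g_\w(x))^s,
\end{equation*}
and for each fixed $x \in \bar H$ and fixed $\w$, the map $s \mapsto (g_\w(x))^s = \exp(s \log g_\w(x))$ is log linear, hence log convex. Since finite sums of log convex functions are log convex (an immediate consequence of H\"older's inequality), the function $s \mapsto (L_s^k u)(x)$ is log convex in $s$ for every fixed $x$.

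Next I would propagate log convexity through the supremum. If $\phi_s(x)$ is log convex in $s$ for every $x$ and strictly positive, then for $\lambda \in [0,1]$,
\begin{equation*}
\phi_{(1-\lambda)s + \lambda t}(x) \le \phi_s(x)^{1-\lambda} \phi_t(x)^{\lambda},
\end{equation*}
and taking the supremum over $x$ and using the elementary inequality $\sup_x f(x) g(x) \le (\sup_x f)(\sup_x g)$ for nonnegative $f, g$ yields
\begin{equation*}
\|\phi_{(1-\lambda)s + \lambda t}\|_{\infty} \le \|\phi_s\|_{\infty}^{1-\lambda} \|\phi_t\|_{\infty}^{\lambda}.
\end{equation*}
Applying this with $\phi_s = L_s^k u$ shows that $s \mapsto \|L_s^k u\|_{\infty}$ is log convex, and then so is $s \mapsto \|L_s^k u\|_{\infty}^{1/k}$ (positive rescaling of the logarithm preserves convexity).

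Finally, Lemma~\ref{lem:nonneg} applied to $u \equiv 1 \in \interior(K)$ gives
\begin{equation*}
r(L_s) = \lim_{k \to \infty} \|L_s^k u\|_{\infty}^{1/k}.
\end{equation*}
By Theorem~\ref{thm:1.1}, $r(L_s) = r(\Lambda_s) > 0$ for all $s \ge 0$, so the logarithm is well defined in the limit. Since the pointwise limit of a sequence of convex functions is convex whenever the limit exists and is finite, $s \mapsto \log r(L_s) = \log r(\Lambda_s)$ is convex on $[0,\infty)$, which is the desired log convexity.

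The argument is essentially a sequence of routine reductions; the only point that requires a little care is verifying that the operations (sum, sup, $k$-th root, pointwise limit) all preserve log convexity, and that $r(\Lambda_s) > 0$ so that we may legitimately pass to logarithms, which is supplied by Theorem~\ref{thm:1.1}. The main technical observation being exploited is that the iterated kernel decomposes as a sum of pure exponentials $\exp(s \log g_\w(x))$, reducing log convexity of the spectral radius to the log convexity of sums of exponentials via H\"older's inequality.
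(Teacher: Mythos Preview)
Your argument is correct: the iterated kernel is a finite sum of pure exponentials in $s$, hence log convex by H\"older; $\sup_x$, $k$-th roots, and pointwise limits all preserve log convexity; and $r(L_s)=\lim_k\|L_s^k u\|_\infty^{1/k}$ from the proof of Lemma~\ref{lem:nonneg} closes the loop. The paper does not give a proof here but defers to Theorem~8.1 of \cite{hdcomp1}, and its subsequent remark (that the result ``can be greatly generalized by a different argument that does not require existence of strictly positive eigenvectors'') suggests that the referenced proof leans on the positive eigenvector $v_s$ from Theorem~\ref{thm:1.1} more substantially than yours does. Your route is closer in spirit to Kingman's matrix argument \cite{EE}; you invoke Theorem~\ref{thm:1.1} only for $r(\Lambda_s)>0$, which is in fact elementary (take $c=\min_{x\in\bar H,\,\beta\in\B}g_\beta(x)>0$, so $\|L_s^k u\|_\infty\ge c^{ks}$ and $r(L_s)\ge c^s$). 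Either way the conclusion is the same, and your write-up is a clean self-contained proof.
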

The proof is essentially the same as the proof of 
Theorem 8.1 in \cite{hdcomp1}, so we do not repeat it here.

Results related to Theorem~\ref{thm:2.1} can be found in \cite{CC},
\cite{DD}, \cite{EE}, \cite{FF}, \cite{GG}, and \cite{HH}. Note that
the terminology {\it super convexity} is used to denote log convexity
in \cite{DD} and \cite{EE}, presumably because any log convex function
is convex, but not conversely.  Theorem~\ref{thm:2.1}, while adequate for
our immediate purposes, can be greatly generalized by a different argument
that does not require existence of strictly positive eigenvectors.  This
generalization (which we omit) contains Kingman's matrix log convexity
result in \cite{EE} as a special case.

In our applications, the map $s \mapsto r(L_s)$ will usually be strictly
decreasing on an interval $[s_1,s_2]$ with $r(L_{s_1}) >1$ and
$r(L_{s_2}) <1$, and we wish to find the unique $s_* \in (s_1,s_2)$ such that
$r(L_{s_*}) =1$.  The following hypothesis insures that $s \mapsto r(L_s)$
is strictly decreasing for all $S$.

\noindent (H7.1): Assume that $g_{\beta}(\cdot)$, $\beta \in \B$ satisfy the
conditions of (H4.1).  Assume also that there exists an integer $\mu \ge 1$
such that $g_\w(x) <1$ for all $\w \in \B_{\mu}$ and all $x \in \bar H$.

\begin{thm}
\label{thm:2.2}
Assume hypotheses (H4.1), (H4.2), (H4.3), and (H7.1) and let $H$ be
mildly regular. Then the map $s \mapsto r(\Lambda_s)$, $s \in \R$, is
strictly decreasing and real analytic and $\lim_{s \rightarrow \infty}
r(\Lambda_s) =0$.
\end{thm}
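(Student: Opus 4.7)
The plan has two essentially independent parts: one handles strict monotonicity together with the decay at infinity and uses only (H7.1) via the $\mu$-fold iterate, and the other establishes real analyticity by classical perturbation theory of simple isolated eigenvalues. Note that since each $g_\beta$ is strictly positive on the compact set $\bar H$, $g_\beta^s \in C^m_\R(\bar H)$ for every real $s$, so Theorem~\ref{thm:1.1} applies to $\Lambda_s$ for all $s \in \R$; in particular $\Lambda_s$ has a strictly positive eigenfunction $v_s$ with eigenvalue $r(\Lambda_s)$.

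First I would exploit (H7.1) via the identity $(\Lambda_s^\mu f)(x) = \sum_{\w \in \B_\mu}(g_\w(x))^s f(\theta_\w(x))$ from \eqref{1.6}. Because $\B$ is finite, so is $\B_\mu$; each $g_\w$ is continuous on the compact set $\bar H$ with $g_\w(x) < 1$, hence
\begin{equation*}
\rho := \max\{g_\w(x) : \w \in \B_\mu,\ x \in \bar H\} < 1.
\end{equation*}
For real $s < t$ and any $\w$, pointwise $(g_\w(x))^t \le \rho^{t-s}(g_\w(x))^s$, so $\Lambda_t^\mu f \le \rho^{t-s}\Lambda_s^\mu f$ for every $f \ge 0$. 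Applied to $v_s$, which satisfies $\Lambda_s^\mu v_s = r(\Lambda_s)^\mu v_s$, this gives $\Lambda_t^\mu v_s \le \rho^{t-s}r(\Lambda_s)^\mu v_s$. Since $v_s$ is strictly positive, Lemma~\ref{lem:nonneg} yields $r(\Lambda_t)^\mu = r(\Lambda_t^\mu) \le \rho^{t-s}r(\Lambda_s)^\mu$, whence
\begin{equation*}
r(\Lambda_t) \le \rho^{(t-s)/\mu}r(\Lambda_s) < r(\Lambda_s).
\end{equation*}
This proves strict monotonicity, and fixing $s$ and letting $t \to \infty$ the same inequality forces $r(\Lambda_t) \to 0$.

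For real analyticity, I would argue that $s \mapsto \Lambda_s$ is an entire operator-valued function. Each strictly positive $g_\beta \in C^m_\R(\bar H)$ is bounded away from $0$, so $\log g_\beta \in C^m_\R(\bar H)$ and $s \mapsto g_\beta^s = \exp(s \log g_\beta)$ extends from $\R$ to an entire $C^m_\C(\bar H)$-valued map; finiteness of $\B$ makes $s \mapsto \Lambda_s$ entire as a map into the bounded operators on $C^m_\C(\bar H)$. By Theorem~\ref{thm:1.1}, at each real $s_0$ the eigenvalue $r(\Lambda_{s_0})$ is algebraically simple and separated from the rest of $\sigma(\Lambda_{s_0})$ by a positive gap. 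Enclose it in a small positively oriented circle $\Gamma \subset \C$ contained in the resolvent set of $\Lambda_{s_0}$; for $s$ in a complex neighborhood of $s_0$ the circle $\Gamma$ remains in the resolvent set (holomorphy of the resolvent in both variables), and the Riesz projection
\begin{equation*}
P(s) = \frac{1}{2\pi i}\oint_\Gamma (z - \Lambda_s)^{-1}\, dz
\end{equation*}
is holomorphic in $s$ and of rank one (ranks of holomorphic families of projections are locally constant). The unique eigenvalue of $\Lambda_s$ inside $\Gamma$ is then $\lambda(s) := \operatorname{tr}(\Lambda_s P(s))/\operatorname{tr}(P(s))$, which is holomorphic in $s$ near $s_0$. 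Continuity of $r(\Lambda_s)$ in $s$, combined with the uniqueness of the strictly positive Perron eigenvalue asserted in Theorem~\ref{thm:1.1}, forces $\lambda(s) = r(\Lambda_s)$ for real $s$ near $s_0$, giving real analyticity of $s \mapsto r(\Lambda_s)$ on $\R$.

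The hardest step is the analyticity argument, specifically verifying that the eigenvalue tracked by $P(s)$ is really the Perron spectral radius rather than some other eigenvalue drifting into $\Gamma$. This is controlled by the positive spectral gap in Theorem~\ref{thm:1.1} and by the continuity of $r(\Lambda_s)$, the latter being a direct output of the same simple-eigenvalue perturbation argument. All remaining ingredients are routine: the finiteness of $\B_\mu$, the spectral identity $r(\Lambda^\mu) = r(\Lambda)^\mu$, and Lemma~\ref{lem:nonneg}.
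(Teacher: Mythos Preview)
Your argument is correct. The monotonicity and decay part is handled cleanly via the $\mu$-fold iterate and Lemma~\ref{lem:nonneg}, and the analyticity argument via Riesz projections for a holomorphic family with an algebraically simple isolated eigenvalue is the standard route; your remark that the eigenvalue tracked by $P(s)$ coincides with $r(\Lambda_s)$ can be made fully rigorous by noting that for real $s$ near $s_0$ the vector $P(s)v_{s_0}$ is strictly positive (by continuity of $P(s)$) and hence, by the uniqueness in Theorem~\ref{thm:1.1}, must be the Perron eigenvector.

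As for comparison with the paper: the paper does not actually prove Theorem~\ref{thm:2.2} here but simply cites \cite{hdcomp1}, so there is no in-paper proof to compare against. Your approach is the natural one and is almost certainly what \cite{hdcomp1} does as well: the monotonicity via (H7.1) and the comparison Lemma~\ref{lem:nonneg} is essentially forced, and for analyticity the only reasonable route given the spectral information in Theorem~\ref{thm:1.1} is exactly the Kato--Riesz perturbation argument you sketch.
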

This result is also proved in \cite{hdcomp1}, so we do not repeat the
proof here.

\begin{remark}
\label{rem:2.4}
Assume that the assumptions of Theorem~\ref{thm:2.2} are satisfied and define
$\psi(x) = \log(r(L_s)) = \log(r(\Lambda_s))$ (where $\log$ denotes the
natural logarithm), so $s \mapsto \psi(s)$ is a convex, strictly
decreasing function with $\psi(0) >1$ (unless $|\B| = p =1$) and
$\lim_{s \rightarrow \infty} \psi(s) = - \infty$. We are interested in finding
the unique value of $s$ such that $\psi(s) =0$. In general suppose that
$\psi:[s_1,s_2] \to \R$ is a continuous, strictly decreasing, convex
function such that $\psi(s_1) >0$ and $\psi(s_2) <0$, so there exists
a unique $s =s_* \in (s_1,s_2)$ with  $\psi(s_*) =0$.  If $t_1$ and $t_2$ are
chosen so that $s_1 \le t_1 < t_2 \le s_*$ and $t_{k+1}$ is obtained from
$t_{k-1}$ and $t_k$ by the secant method, an elementary argument show that
$\lim_{k \rightarrow \infty} t_k = s_*$.  If $s_* \le t_2 < t_1 < s_2$ and
$s_1 \le t_3$, a similar argument shows that $\lim_{k \rightarrow \infty} t_k
= s_*$. If $\psi \in C^3$, elementary numerical analysis implies
that the rate of convergence is faster than linear ($= (1 + \sqrt{5})/2)$.
In our numerical work, we apply these observations, not directly to $\psi(s) =
\log(r(\Lambda_s))$, but to decreasing functions which closely
approximate $\log(r(\Lambda_s))$.
\end{remark}

One can also ask whether the maps $s \mapsto r(B_s)$ and $s \mapsto r(A_s)$
are log convex, where $A_s$ and $B_s$ are the previously described
approximating matrices for $L_s$.  An easier question is whether the map $s
\mapsto r(M_s)$ is log convex, where $A_s$ and $B_s$ are obtained from $M_s$
by adding error correction terms.  In \cite{hdcomp1}, it was proved that in
the one dimensional case, $s \mapsto r(M_s)$ is log convex. The proof in the
two dimensional case is similar, and we do not repeat it here.

\frenchspacing

\def\cprime{$'$} \def\cprime{$'$}
\providecommand{\bysame}{\leavevmode\hbox to3em{\hrulefill}\thinspace}
\providecommand{\MR}{\relax\ifhmode\unskip\space\fi MR }
\providecommand{\MRhref}[2]{%
  \href{http://www.ams.org/mathscinet-getitem?mr=#1}{#2}
}
\providecommand{\href}[2]{#2}

\end{document}